\numberwithin{equation}{section}
\numberwithin{figure}{section}
\theoremstyle{plain}
\newtheorem{theorem}{Theorem}[section]
\newtheorem{definition}{Definition}
\newtheorem{corollary}[theorem]{Corollary}
\newtheorem{lemma}[theorem]{Lemma}
\newtheorem{remark}[theorem]{Remark}
\newtheorem{claim}[theorem]{Claim}
\newtheorem{conjecture}[theorem]{Conjecture}
\newtheorem{question}[theorem]{Question}
\renewcommand{\epsilon}{\varepsilon}
\newcommand\eps{\varepsilon}
\newcommand{\Prob}{\mathbb{P}}
\newcommand{\BigT}{\operatorname{Big}_T}
\newcommand{\GoodT}{\operatorname{Good}_{T}}
\newcommand{\calE}{\mathcal{E}}
\newcommand{\calF}{\mathcal{F}}
\def\deg{\text{deg}}
\newcommand{\WC}{\textrm{WC}}
\newcommand{\cF}{\mathcal{F}}
\newcommand{\cH}{\mathcal{H}}
\newcommand{\cX}{\mathcal{X}}
\def\moverlay{\mathpalette\mov@rlay}
\def\mov@rlay#1#2{\leavevmode\vtop{   \baselineskip\z@skip \lineskiplimit-\maxdimen
	\ialign{\hfil$\m@th#1##$\hfil\cr#2\crcr}}}
\newcommand{\charfusion}[3][\mathord]{
#1{\ifx#1\mathop\vphantom{#2}\fi
	\mathpalette\mov@rlay{#2\cr#3}
}
\ifx#1\mathop\expandafter\displaylimits\fi}
\newcommand{\dcup}{\charfusion[\mathbin]{\cup}{\cdot}}
\begin{document}

\title[Creating spanning trees in Waiter-Client games]{%
	Creating spanning trees in Waiter-Client games}

\author[Grzegorz Adamski]{Grzegorz Adamski}
\author[Sylwia Antoniuk]{Sylwia Antoniuk}
\author[Ma\l gorzata Bednarska-Bzd\k{e}ga]{%
	Ma\l gorzata Bednarska-Bzd\k{e}ga}
\author[Dennis Clemens]{Dennis Clemens}
\author[Fabian Hamann]{Fabian Hamann}
\author[Yannick Mogge]{Yannick Mogge}

\thanks{The research of the fourth and sixth author is supported 
by Deutsche Forschungsgemeinschaft (Project CL 903/1-1). }

\address{Department of Discrete Mathematics, Faculty of Mathematics 
	and CS, Adam Mickiewicz University, Pozna\'n, Poland}
\email{grzegorz.adamski@amu.edu.pl, sylwia.antoniuk@amu.edu.pl, 
	mbed@amu.edu.pl}
\address{Institute of Mathematics, Hamburg University of Technology, 
	Hamburg, Germany.}
\email{dennis.clemens@tuhh.de, fabian.hamann@tuhh.de,
 	yannick.mogge@tuhh.de}

\pagestyle{plain}

\maketitle

\begin{abstract}
For a positive integer $n$ and a tree $T_n$ on $n$ vertices, we consider an unbiased Waiter-Client game $\WC(n,T_n)$ played on the complete graph~$K_n$, in which Waiter's goal is to force Client to build a copy of $T_n$. We prove that for every constant $c<1/3$, if $\Delta(T_n)\le cn$ and $n$ is sufficiently large, then Waiter has a winning strategy in $\WC(n,T_n)$. On the other hand, we show that there exist a positive constant $c'<1/2$  and a family of trees $T_{n}$ with $\Delta(T_n)\le c'n$ such that Client has a winning strategy in the $\WC(n,T_n)$ game for every $n$ sufficiently large. We also consider the corresponding problem in the Client-Waiter version of the game.
\end{abstract}


\section{Introduction}\label{sec:intro}

We consider finite, two-person, perfect-information games called unbiased Waiter-Client games, which are closely related to Maker-Breaker and Avoider-Enforcer positional games (cf.~\cite{beck2008combinatorial}, \cite{hefetz2014positional}).
Given a hypergraph $\cH = (\cX,\cF)$, in the Waiter-Client game played on board $\cH$, 
the first player, called Waiter, offers the second player, called Client, two previously unclaimed elements of $\cX$. Client then chooses one of these elements which he keeps and the other element goes to Waiter. If in the final round only one unclaimed element remains, then it goes to Waiter. The game ends when all elements of $\cX$ have been claimed. Waiter wins if at the end there is a set $F \in \cF$ with all its elements claimed by Client; otherwise Client is the winner.

The rules of the Client-Waiter game on $\cH$ are similar, with the difference that if at the end of the game there is a set $F \in \cF$ with all its elements claimed by Client, then Client is the winner. Also the last round rule in the Client-Waiter game is different: if only one unclaimed element remains, it goes to Client. Let us add that Waiter-Client and Client-Waiter games were introduced by Beck~\cite{beck2002secmom} under the names of Picker-Chooser and Chooser-Picker, respectively.

Usually, Waiter-Client (or Client-Waiter) games are studied in a graph setting, i.e.~it is assumed that $\cX$ is the set of edges of the complete graph~$K_n$ and Waiter tries to force Client to build (or Client wants to build) a graph having a given graph property (which is just a family of graphs). In other words, the family $\cF$ consists of the edge-sets of a given graph family. With some abuse of notation, we say that such games are played on $K_n$.  

There is an interesting relation between games played on $K_n$ and random processes. For instance, the case where in a Waiter-Client game on $K_n$ Waiter plays randomly is the well-known Achlioptas process (without replacement). 
Furthermore, a classic observation shows that sometimes the winner of the game could be predicted using a heuristic known as the \emph{probabilistic intuition}. This intuition suggests that the result of the game is related to the properties of the (binomial) random graph $G(n, 1/2)$, in which the number of edges is roughly the same as the number of Client's edges at the end of the game. 
A fascinating example of this phenomenon was observed by Beck~\cite{beck2008combinatorial} who proved that the size of the largest clique that Waiter can force in a Waiter-Client game played on $K_n$ is $2\log_2n-2\log_2\log_2n+O(1)$. The same holds for the Client-Waiter version of the problem, i.e.~this is the size of the largest clique Client can build in $K_n$. Note that $2\log_2n-2\log_2\log_2n+O(1)$ is also the clique number of the random graph $G(n, 1/2)$ with high probability, see e.g.~\cite{bollobas2001random}. We return to random intuition hints in the last part of this section. 

In this paper, we focus on Waiter-Client games on $K_n$ where Waiter tries to force Client to build a copy of a~fixed spanning tree $T_n$. We denote such a game by $\WC(n,T_n)$. Throughout the paper we assume that every edge claimed by Client is colored red, while every edge in the Waiter's graph is colored blue. 
Note that if we relax the assumption that the spanning tree is fixed and we allow any spanning tree, then Waiter's task becomes much easier. Indeed, Csernenszky et al.~\cite{csernenszky2009chooser} showed that Waiter can achieve such a goal playing on any graph $G$ (not necessarily complete), if and only if $G$ contains two edge-disjoint spanning trees. Hence, Waiter can force a spanning red tree in $K_n$ for every $n\ge 4$. 

Returning to $\WC(n,T_n)$, we are interested in the outcome of the game depending on the maximum degree of $T_n$ and for large $n$. The last three authors and their co-authors proved in~\cite{clemens2020fast} that if $n$ is large enough, Waiter can force a red Hamilton path within $n-1$ rounds. Furthermore, Waiter can force a red copy of any fixed tree $T_n$ with $\Delta(T_n) \leq c \sqrt{n}$ within $n$ rounds, for some suitable constant $c$. 
Though the authors of~\cite{clemens2020fast} focused on the problem of building a given tree fast, they also posed the following question regarding maximum degrees.

\begin{question}\label{question:deg}
What is the largest integer $D(n)$ such that for every tree $T_n$ with $n$ vertices and the maximum degree at most $D(n)$ Waiter has a winning strategy in $\WC(n,T_n)$? 
\end{question}

The above mentioned result in~\cite{clemens2020fast} implies that $D(n)=\Omega(\sqrt n)$. On the other hand, it is known that $D(n)\le n/2+O(\sqrt{n\log n})$. Indeed, Waiter cannot even force a red star of size $n/2+c\sqrt{n\log n}$ for a sufficiently large constant $c>0$, which follows from a much more general results on discrepancy games played on hypergraphs, proved by Beck (\cite{beck2008combinatorial}, Theorem 18.3). Our main result is that $n/3+o(n)\le D(n)\le (1/2-c)n+o(n)$ for some positive constant $c$. We state it as the following two theorems.

\begin{theorem}\label{thm:giventree}
For every $\eps\in\left(0,\frac{1}{3}\right)$ there exist positive constants $b$ and $n_0$ such that the following holds. 
Let $T_n$ be a tree on $n\geq n_0$ vertices with $\Delta(T_n)<\left(\frac13-\eps\right)n$. Then Waiter has a winning strategy in $\WC(n,T_n)$. Furthermore, she can obtain her goal within at most $n+b\sqrt n$ rounds.
\end{theorem}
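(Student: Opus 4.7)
The plan is to describe an explicit \emph{pair-at-parent} strategy for Waiter that embeds $T_n$ into $K_n$ vertex by vertex. Root $T_n$ at a suitably chosen vertex $r^{\star}$ and list its vertices as $v_1=r^{\star},v_2,\ldots,v_n$ in a BFS-like order that places every parent before all of its children; within each BFS level, order the vertices by decreasing number of children. Waiter sets $\phi(v_1)=u_1$ for an arbitrary $u_1\in V(K_n)$, and for $i\ge 2$ processes $v_i$ by offering $c_i$ consecutive pairs of free edges $\{u_i x, u_i y\}$ at $u_i=\phi(v_i)$, where $c_i$ is the number of children of $v_i$ in the rooted tree and $x,y$ are two currently unembedded vertices of $K_n$. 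Whichever edge Client claims from such a pair has an unembedded endpoint, which becomes the image of the next unprocessed child of $v_i$.

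The key analytic observation is that at the start of processing $v_i$, \emph{every Waiter-edge incident to $u_i=\phi(v_i)$ connects $u_i$ to an already-embedded vertex}. Indeed, every Waiter-edge in the game was created when some already-embedded vertex $\phi(v_j)$ was being processed and an unembedded candidate $u$ was rejected in one of its pairs; hence each such Waiter-edge has the form $\phi(v_j)u$, and for $u=u_i$ the other endpoint $\phi(v_j)$ is embedded. Moreover, the only Client-edge at $u_i$ so far is the tree edge to $v_i$'s parent, which likewise goes to an embedded vertex. Consequently every currently unembedded vertex is joined to $u_i$ by a free edge, and Waiter's pair-at-parent step succeeds for $v_i$ whenever the \emph{room condition}
\[
2c_i \;\leq\; n-i
\]
holds, where $n-i$ is the number of unembedded vertices at that moment.

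For $i\le(\tfrac13+2\eps)n$ the room condition is immediate from $c_i\le\Delta(T_n)<(\tfrac13-\eps)n$, and this covers the bulk of the embedding. The hard part is the late range $i>(\tfrac13+2\eps)n$, where the condition forces $c_i$ to shrink correspondingly and thereby restricts which tree vertices may be processed that late. The main obstacle is therefore to choose the root and the within-level ordering so that every vertex of $T_n$ with degree close to $\Delta(T_n)$ is processed early; for adversarial trees (a prototypical example being a ``dumbbell'' in which two near-maximum-degree vertices are joined by a long path), this requires rooting $T_n$ at a centroid-like vertex and carefully breaking ties within each BFS level in favour of high-degree vertices, so that, for instance, in the dumbbell both $r^{\star}$ and $r'$ are processed by BFS level $O(1)$ after the path is spanned. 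The threshold $\tfrac13$ is precisely what makes such an ordering possible for every tree with $\Delta(T_n)<(\tfrac13-\eps)n$, since it is the largest constant for which a carefully tuned root-and-order choice keeps $2c_i\le n-i$ at every step; the additional $b\sqrt n$ rounds allowed by the theorem provide some further slack to absorb lower-order boundary effects and to compensate for small imbalances arising from Client's choices at the very end of the game.
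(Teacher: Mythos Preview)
Your pair-at-parent strategy has a genuine gap: it fails on the dumbbell tree, and no choice of root or BFS-ordering repairs it. Take two vertices $a,b$ of degree $d=\lfloor(\tfrac13-\eps)n\rfloor$ joined by a bare path of length $\ell\approx(\tfrac13+2\eps)n$, with $d-1$ leaves pending at each of $a,b$. In any order in which parents precede children, the path must be fully embedded before either of $a,b$ is processed. Once the first of them, say $a$, has been processed, only the $d-1$ leaves of $b$ remain unembedded. But each pair $\{u_bx,u_by\}$ embeds one vertex and colours the other blue at $u_b$; after $k$ such rounds there are only $d-1-2k$ unembedded vertices still free to $u_b$, so Waiter stalls after about $(d-1)/2$ rounds, a deficit of order $n$. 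Interleaving the processing of $a$ and $b$ does not help: Client can always reject a vertex already rejected at the other centre, and a short count shows that with two centres every three fresh vertices yield only two embedded ones, again a linear shortfall. The extra $b\sqrt n$ rounds cannot absorb this.

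The paper's proof is structurally different and this is exactly the obstruction it is built to overcome. Waiter first embeds a tiny subforest $F_1$ containing all high-degree vertices, and then forces the stars at their images not by pairs at a single parent but by \emph{cherries} $\{u_rw,\,u_{r'}w\}$ between two different roots and a common free vertex (the shrink operations of Section~\ref{sec:rooted.forests}); Client's choice then merely decides which root gains a child, and no free vertex is wasted. The balancing argument requires at least four active roots throughout (Lemma~\ref{vier}), which the paper manufactures by cutting long bare paths and treating their new endpoints as additional roots; in the dumbbell this produces the four roots $a,p_a,p_b,b$. It is in the suitability condition of Theorem~\ref{thm:root1del} that the threshold $\tfrac13$ genuinely enters, not in a room inequality of the form $2c_i\le n-i$.
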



\begin{theorem}\label{thm:upperbound.wc}
There are positive constants $c$ and $n_0$ such that the following holds for every $n\geq n_0$. There exists a tree $T_n$ with $n$ vertices and $\Delta(T_n)<\left(\frac12-c\right)n$ such that Client has a~winning strategy in $\WC(n,T_n)$.
\end{theorem}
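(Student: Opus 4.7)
Let $c \in (0, 1/16)$ and set $k := \lceil(\tfrac12 - 2c)n\rceil - 1$ and $L := n - 2k - 1 \sim 4cn$. I take $T_n$ to be a \emph{double broom}: two hubs $u_0, v_0$, each adjacent to $k$ distinct leaves, joined by a path of length $L$ through internal vertices $w_1, \dots, w_{L-1}$. Then $|V(T_n)| = n$ and $\Delta(T_n) = k + 1 < (\tfrac12 - c) n$, meeting the degree condition.

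The structural key is that $u_0$ and $v_0$ are at tree-distance $L \geq 2$ in $T_n$, so their open $T_n$-neighborhoods are disjoint of size $k + 1$ each. Therefore, in any embedding $\phi$ of $T_n$ into Client's red graph $R$, the hub images $u' := \phi(u_0), v' := \phi(v_0)$ must satisfy
\[
|N_R(u') \cup N_R(v')| \;\geq\; 2(k+1) \;=\; (1 - 4c)n + O(1),
\]
and moreover there must exist a red path from $u'$ to $v'$ of exact length $L$ whose internal vertices are disjoint from the chosen $k$ leaves of each hub. The plan is to design a Client strategy ruling out every such candidate pair $(u', v')$.

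Client's strategy combines Beck's standard degree-discrepancy strategy for Waiter-Client (cf.~\cite{beck2008combinatorial}), which alone ensures $|\deg_R(u) - (n-1)/2| = o(n)$ for every $u$, with a joint potential function that tracks, for every pair $(u, v)$, a penalty growing when either $|N_R(u) \cup N_R(v)|$ approaches the embedding threshold, or a candidate red $u$--$v$ path of length exactly $L$ (with suitable internal vertex set) nears completion. Greedy minimization of this combined potential---of the two offered edges, Client takes the one whose choice does not increase the potential---preserves both invariants through the game. Since $(1 - 4c)n + O(1) > \tfrac34 n + o(n)$ whenever $c < 1/16$ and $n$ is large, maintaining $|N_R(u) \cup N_R(v)| \leq \tfrac34 n + o(n)$ on the ``un-attacked'' pairs, together with the path-avoidance invariant on the rest, precludes every embedding.

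The principal obstacle is the design and analysis of the joint potential. In particular, Waiter can, by pairing $\{u, w\}$ with $\{v, w\}$ for every $w$, force $|N_R(u) \cap N_R(v)| = 0$ for any single pair, and hence $|N_R(u) \cup N_R(v)| \approx n$, so no uniform union bound is achievable. Each such attack costs Waiter $\Theta(n)$ rounds out of $\binom{n}{2}/2$ total, so he cannot execute it for all $\binom{n}{2}$ pairs; Client must exploit this scarcity to preserve the path-avoidance invariant precisely on the attacked pairs, while using the remaining rounds to control the union invariant elsewhere. Verifying the per-step drift estimate for the combined potential---with the parameter $c < 1/16$ supplying the arithmetic slack---is the key technical step.
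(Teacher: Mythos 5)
Your choice of target tree (a double broom, i.e.\ two large stars joined by a bare path) matches the paper's, and you correctly reduce the problem to preventing two disjoint large red stars. However, what follows is a proof \emph{plan}, not a proof: the central technical object---the ``joint potential function''---is never defined, and the claimed per-step drift estimate is never verified. You yourself flag this as ``the principal obstacle,'' and the plan essentially restates the difficulty rather than resolving it.

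More seriously, I am not convinced the deterministic potential approach in the proposed form can be made to work, and the paper's proof suggests why. You observe that Waiter can attack a single pair $(x,y)$ by offering cherries $xwy$ for all $w$, driving $N_R(x)\cap N_R(y)$ to $\emptyset$ so that $|N_R(x)\cup N_R(y)| = n-2$. You respond by appealing to a ``path-avoidance invariant'' on attacked pairs, but nothing is said about how to enforce it or how to coordinate it with the union invariant on unattacked pairs. The paper's resolution is different and subtler: Client plays a \emph{biased randomized} strategy (prefer the lower-indexed vertex with probability $p=0.4$ when offered a cherry), and then shows that the cherry attack \emph{backfires}: it inflates the blue degree of the lower-indexed vertex past $(1-c)n$, violating a condition necessary for embedding. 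The real content of the paper's proof is the three-way accounting (move types 1, 2, 3, giving the inequality $T_1 + 4T_2 \geq n-2$) combined with a Chernoff-type concentration argument for a random number of trials, which quantifies the trade-off: Waiter cannot simultaneously keep $\deg_W(x)$ small and $|N_W(x)\cap N_W(y)|$ small. This is precisely the trade-off your plan gestures at but does not carry out. Also, your claimed range $c < 1/16$ is not established anywhere in the argument; the paper proves $c = 0.001$ and explicitly declines to optimize.
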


We prove the above theorem with $c=0.001$ and make no further effort to optimize it. 
Though the improvement from $D(n)\le 0.5 n+o(n)$ to $D(n)\le 0.499 n +o(n)$ seems cosmetic, we think that it is important for predicting a proper constant $C$ in the desired formula $D(n)=Cn+o(n)$, provided such a constant $C$ exists. The linear bounds on $D(n)$ prove that $D(n)$ is far from $\Theta(n/\log(n))$ suggested by the random graph $G(n,1/2)$ behavior~\cite{krivelevich2010embedding}. Nonetheless, our argument for Theorem~\ref{thm:upperbound.wc} involves randomness. Namely, we describe a randomized strategy for Client that allows avoiding a~red copy of a~given tree with high probability.  

Finally, let us mention the Client-Waiter version of the above game. In contrast to Waiter-Client games, here the random graph intuition may help. It is known \cite{komlos2001} that there exists a tree $T_n$ with $n$ vertices and $\Delta(T_n)=O(n/\log(n))$ such that with high probability the random graph $G(n,1/2)$ has no copy of $T_n$. We prove that a similar phenomenon occurs in Client-Waiter games.

\begin{theorem}\label{thm:upperbound.cw}
There are positive constants $c$ and $n_0$ such that the following holds. 
For every $n\geq n_0$ there exists a tree $T_n$ with $n$ vertices and $\Delta(T_n)\leq \frac{cn}{\log(n)}$ 
such that in a Client-Waiter game on $K_n$, Waiter can prevent Client from claiming a red copy of $T_n$.
\end{theorem}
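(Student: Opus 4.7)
I take $T_n$ to be the tree furnished by the cited result of~\cite{komlos2001}, so $|V(T_n)|=n$, $\Delta(T_n)\le cn/\log n$, and $\Pr[T_n\subseteq G(n,1/2)]=o(1)$. The plan is to apply the Client-Waiter analogue of the Erd\H{o}s--Selfridge criterion: if a hypergraph $\mathcal{F}$ on a finite ground set satisfies $\sum_{F\in\mathcal{F}}2^{-|F|}<\tfrac12$, then in the Client-Waiter game on $\mathcal{F}$, Waiter has a winning strategy via the standard pair-balancing potential argument (offer at each round a pair of elements whose ``danger weight sums'' are as close as possible, so that whichever element Client picks, the total potential $\Phi=\sum_{F\text{ alive}}2^{-d(F)}$ does not increase, where $d(F)$ is the number of $F$-elements still unoffered). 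With $\mathcal{F}$ the family of edge sets of copies of $T_n$ in $K_n$, the criterion evaluates to
\begin{equation*}
\sum_{F\in\mathcal{F}}2^{-|F|} \;=\; |\mathcal{F}|\cdot 2^{-(n-1)} \;=\; 2\cdot\mathbb{E}\bigl[\#\{\text{copies of }T_n\text{ in }G(n,1/2)\}\bigr],
\end{equation*}
so the theorem reduces to showing $\mathbb{E}[\#\{\text{copies of }T_n\text{ in }G(n,1/2)\}]=o(1)$.

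First, I would inspect the construction in~\cite{komlos2001} to see whether it provides this expected-count bound directly (which would typically happen when $T_n$ is highly symmetric so that $|\mathrm{Aut}(T_n)|$ is large compared to $n!\cdot 2^{-(n-1)}$); in that case the Erd\H{o}s--Selfridge criterion closes the argument, and the strategy's total running weight bounds the number of completed copies of $T_n$ in Client's graph by $2\mathbb{E}[\#\text{copies}]=o(1)$, hence $0$ as it is a nonnegative integer. If Komlós's proof instead uses a second-moment or Janson-type concentration argument and leaves $\mathbb{E}[\#\text{copies}]$ of order $\Omega(1)$, I would refine the potential by grouping copies of $T_n$ into clusters indexed by the skeleton structure in Komlós's decomposition of $T_n$ into a core plus leaves. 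Each cluster would carry a weight equal to the $G(n,1/2)$-probability of producing at least one of its members; then the sum of cluster-weights is exactly $\Pr[T_n\subseteq G(n,1/2)]=o(1)$, and Waiter runs a pair-balancing strategy on these cluster-weights in place of individual $2^{-|F|}$ weights.

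The main obstacle is this second scenario: one has to verify that the cluster-weight potential is non-increasing under Waiter's pair-balancing rule. The standard Erd\H{o}s--Selfridge accounting works because a given edge multiplies the weight of any $F$ containing it by exactly $2$, so dangers at individual edges behave additively; the cluster-weights do not have such a clean multiplicative behavior and their change upon revealing a pair depends on the joint structure of the cluster. I would handle this by designing the clusters in a balanced way, ensuring that each edge of $K_n$ lies in a bounded number of clusters and that within each cluster the weight can only decrease by a bounded factor in a single round; once such bounds are in place, a slightly weaker pair-balancing strategy (with constants adjusted) suffices, and the final potential is still $o(1)$, forcing Waiter's victory. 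The remaining details (parity handling, correct treatment of the last round, explicit dependence of the constant $c$) are standard and follow the blueprint already used for Theorem~\ref{thm:upperbound.wc}.
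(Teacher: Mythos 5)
Your first scenario — applying the Client--Waiter Erd\H{o}s--Selfridge criterion directly to the family $\mathcal F$ of copies of $T_n$ — is doomed regardless of which tree from~\cite{komlos2001} one picks. For a spanning tree $T$ one has
\[
\sum_{F\in\mathcal F} 2^{-|F|} \;=\; \frac{n!}{|\mathrm{Aut}(T)|}\,2^{-(n-1)},
\]
and for a tree with $\Delta(T)=\Theta(n/\log n)$ this quantity is astronomically large, not $o(1)$. Take, for instance, the ``broom'' used in the paper's own proof (a central vertex of degree $t=\Theta(\log n)$ each of whose neighbours is the centre of a star of size $\approx n/t$): Stirling gives $n!/|\mathrm{Aut}(T)|\approx t^n$ up to sub-exponential factors, so $\sum_F 2^{-|F|}$ behaves like $(t/2)^n\to\infty$. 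The point you are missing is that ``$T_n\subseteq G(n,1/2)$ a.a.s.\ fails'' does not imply $\mathbb E[\#\text{copies}]=o(1)$; for these trees the copy count has a heavy upper tail and huge mean while being zero a.a.s. The first-moment bound that actually works in~\cite{komlos2001} is on a much smaller auxiliary structure (essentially: for each $t$-set $A$ of vertices, the event that $A$ has no common non-neighbour), and that is also the structure the paper's game-theoretic argument exploits.

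Your second scenario is a real idea but it is not carried out, and the step you yourself flag as the ``main obstacle'' is precisely where the approach stalls. Potential-function arguments of Erd\H{o}s--Selfridge type lean heavily on the clean multiplicative update $2^{-d(F)}\mapsto 2^{-d(F)+1}$; once you replace per-copy weights with cluster weights defined as probabilities of unions of events, there is no reason the potential of a well-chosen pair fails to increase on both of Client's choices, and you give no construction of clusters, no bounded-overlap bound, and no proof that a (weakened) pair-balancing step controls the new potential. As written, this part of the argument is a statement of intent rather than a proof.

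The paper's route is genuinely different. It fixes a specific tree $T$ with a vertex $v$ of degree $t=\lfloor 0.1\log_2 n\rfloor$ such that every vertex of $T$ has a neighbour in $N_T(v)$; then containing a red copy of $T$ forces the image of $N_T(v)$ to be a $t$-set with no common \emph{blue} neighbour. So Waiter's task reduces to forcing that, in her own (blue) graph, \emph{every} $t$-set has a common neighbour. This is handled by Lemma~\ref{lem:large.common.degree}: Waiter picks an auxiliary orientation and plays $n$ sub-games, one per vertex, each time applying the transversal criterion of Theorem~\ref{thm:WC_transversal} to the hypergraph of current ``surviving common out-neighbourhoods'' (Lemma~\ref{lemma:games.on.hypergraph}). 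The crucial feature is that at each stage the hyperedges shrink at a controlled rate ($|N^+_i[A]|\geq |N^+_0[A]|/100^k$), so the game never degenerates. In short: the paper uses the structure of the tree to identify a low-complexity witness of non-containment and builds a staged game around that witness; your proposal tries to beat the global count of copies, which for spanning trees is an intrinsically much harder and, in the first scenario, impossible target.
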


\subsection*{Organization of the paper}

In Section~\ref{sec:prelim} we collect useful probability tools, positional games theory tools and some results on embedding trees. In Section~\ref{sec:rooted.forests}
we prove a few lemmas and theorems regarding forcing red forests in Waiter-Client games. 
Some of them may be of independent interest. For example we show (see Theorem~\ref{thm:root1del}) that if $n\ge 2$ and $F$ is a forest with at most $n$ vertices and exactly $m$ edges, such that every of its components has less than $m/3$ edges, then Waiter can force a~red copy of $F$ in~$K_n$. 

Theorem~\ref{thm:giventree} is proved in Section~\ref{sec:proof.linear.degree}, while
Theorem~\ref{thm:upperbound.wc} is proved in Section~\ref{sec:avoiding}.
Section~\ref{sec:cw_trees} contains the proof of Theorem~\ref{thm:upperbound.cw}.
We add some concluding remarks at the end of the paper.



\section{Preliminaries}\label{sec:prelim}

\subsection{Notation}

First of all, we set $[n]:=\{k\in\mathbb{N}:~ 1\leq k\leq n\}$ for
every positive integer $n$. 

Most of our graph notation is standard 
and follows~\cite{west2001introduction}.
Let $G$ be any graph. Then we write $V(G)$ and $E(G)$ for the vertex set and the edge set of $G$, respectively, and we set $v(G) :=|V(G)|$ and $e(G):=|E(G)|$.
If $\{v,w\}$ is a pair of vertices, we write $vw$ for short. 
The neighborhood of a vertex $v$ in graph $G$ is $N_G(v) : =\{w\in V(G): vw\in E(G)\}$,
and its degree is $\deg_G(v) :=|N_G(v)|$.
The maximum degree in $G$ is
$\Delta(G) := \max_{v\in V(G)} \deg_G(v)$.
Given any $A,B\subseteq V(G)$ and $v\in V(G)$,
we set
$E_G(A):=\{vw\in E(G):~ v,w\in A\}$,
$e_G(A):=|E_G(A)|$,
$E_G(A,B):=\{vw\in E(G):~ v\in A,w\in B\}$,
$e_G(A,B):=|E_G(A,B)|$,
$\deg_G(v,A):=|N_G(v)\cap A|$, and
$N_G(A):= \left(\bigcup_{v\in A} N_G(v)\right)\setminus A$.
Note that whenever the graph $G$ is clear from the context,
we may omit the subscript $G$ in all definitions above.
Given $A\subset V(G)$, we let $G[A]=(A,E_G(A))$  
be the subgraph of $G$ induced by $A$,
and we set $G-A:=G[V(G)\setminus A]$.
Similarly, if we have disjoint sets $A,B\subset V(G)$, we  write $G[A,B] := (A\cup B,E_{G}(A,B))$.
If $v$ is a vertex in $G$, we shortly write
$G-v$ for $G-\{v\}$.
Given  $B\subseteq E(G)$, we also let
$G\setminus B := (V(G),E(G)\setminus B)$. 
The disjoint union of graphs $G$ and $G'$ is denoted by $G\dcup G'$.
Moreover, an embedding of a graph $H$ into a graph $G$ is an injective map 
$f:V(H)\rightarrow V(G)$ such that $vw\in E(H)$ implies
$f(v)f(w)\in E(G)$.

We will consider forests $F$ with roots, where we may
allow that the components have up to two roots.
We say that a component of a forest is rooted if it contains exactly one root, 
and we say that it is double-rooted if it has exactly two roots.
By a rooted forest we mean a forest such that each of its components has exactly one root.
A leaf of $F$ is a vertex of degree 1 which is not a root (if $F$ has no roots, then every vertex of degree 1 is a leaf).  
With $L(F)$ we denote the set
of leaves in $F$. 
A bare path in $F$ is a path in $F$ such that all its inner vertices 
have degree 2 in $F$.
A leaf matching in $F$ is a matching such that each of its edges 
contains a vertex from $L(F)$.

Assume that some Waiter-Client game is in progress.
We let $W$ and $C$ denote the graphs consisting of Waiter's (blue) edges and 
Client's (red) edges, respectively, with the vertex set being equal 
to the graph that the game is played on.
If an edge belongs to $C\cup W$, we say that it is colored, or 
alternatively we say that it is claimed.
Otherwise, we say that the edge is uncolored or free. A vertex is free if all edges incident to it are free. 
We say that Waiter can force a graph $G$ withing $t$ rounds if Waiter has a strategy such that after at most $t$ rounds there is a red copy of $G$ at the board. 

In some of our arguments Waiter forces
a red copy of a given forest $F$ with roots in a greedy kind of way. 
That is, there is a sequence 
$F_1\subset F_2\subseteq \ldots \subseteq F_s=F$
with $F_1$ containing the roots of $F$, and for each $i$,
Waiter makes sure that after a total of $e(F_i)$ rounds
a red copy $\bar{F_i}$ of $F_i$ is created. 
Implicitly, this way we obtain an embedding 
$f_i:V(F_i)\rightarrow V(\bar{F_i})$ which extends the 
earlier embedding $f_{i-1}$ and which fixes the images of
the the vertices of $F_i$. 
Having such a strategy, we sometimes simply say that 
Waiter embeds the graph $F_i$ into the given board.
Moreover, if the image of a vertex $v$ under $f_i$ 
is a vertex $x$, then we simply say that Waiter maps $v$ to $x$.

\subsection{Probabilistic tools}

In our probabilistic argument, we will use
a variant of the Chernoff bound, for a sum of independent random variables $Z_i$ such that the number of terms is a random variable, not necessarily independent of $Z_i$.

\begin{lemma}\label{lemma:Chernoff_variant}
For $n\in{\mathbb N}$ and $\rho\in(0,1)$, let $(Z_i)_{i=1}^{n}$ be a sequence of i.i.d.~random variables with probability distribution
\begin{align*}
    Z_i = \begin{cases}
        1, & \text{with probability } \rho,\\
        0, & \text{with probability } 1-\rho.
    \end{cases}
\end{align*} 
Then for any random variable $T$ 
taking values in the set $\{0,1,\dots,n\}$, we have
\begin{align*}
    \Prob\left(\left|\sum_{i=1}^T Z_i - T\rho \right| > 2\sqrt{n\log n} \right) = o(n^{-2}).
\end{align*}
\end{lemma}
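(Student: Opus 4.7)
The plan is to bypass the potential dependence between $T$ and the $Z_i$'s by a crude union bound over the at most $n+1$ possible values of $T$. The key observation is that, for any $\omega$ in the underlying probability space, if $\bigl|\sum_{i=1}^{T(\omega)} Z_i(\omega) - T(\omega)\rho\bigr| > 2\sqrt{n\log n}$, then for the particular deterministic index $t = T(\omega) \in \{0,1,\dots,n\}$ we have $\bigl|\sum_{i=1}^{t} Z_i(\omega) - t\rho\bigr| > 2\sqrt{n\log n}$. Hence the event of interest is contained in $\bigcup_{t=0}^{n} \bigl\{\bigl|\sum_{i=1}^{t} Z_i - t\rho\bigr| > 2\sqrt{n\log n}\bigr\}$, regardless of how $T$ depends on $(Z_i)$.

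Given this reduction, I would apply a standard concentration inequality (Hoeffding, say) to each fixed $t$: since $\sum_{i=1}^{t} Z_i$ is a sum of $t$ independent $\{0,1\}$-valued variables with mean $t\rho$, one has
\begin{equation*}
\Prob\left(\left|\sum_{i=1}^{t} Z_i - t\rho\right| > \lambda\right) \le 2\exp(-2\lambda^2/t).
\end{equation*}
Plugging in $\lambda = 2\sqrt{n\log n}$ and using $t \le n$ yields a bound of $2\exp(-8\log n) = 2n^{-8}$. The case $t=0$ contributes nothing, since the empty sum equals $0 = t\rho$.

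Summing the $n+1$ terms then gives
\begin{equation*}
\Prob\left(\left|\sum_{i=1}^{T} Z_i - T\rho\right| > 2\sqrt{n\log n}\right) \;\le\; 2(n+1)n^{-8} \;=\; O(n^{-7}),
\end{equation*}
which is comfortably $o(n^{-2})$, as required.

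There is really no serious obstacle here, and the slack between $O(n^{-7})$ and the needed $o(n^{-2})$ is vast. The only conceptual point worth flagging, and the only reason the statement deserves to be isolated as a lemma, is the observation above: the union bound does not require $T$ to be a stopping time, nor to be independent of the $Z_i$'s, nor even to be measurable with respect to them in any particular way. Any other Chernoff-type tail bound (Bernstein, Azuma) could be substituted for Hoeffding without affecting the conclusion.
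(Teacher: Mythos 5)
Your proof takes essentially the same route as the paper: a union bound over the at most $n+1$ deterministic values of $T$, followed by a Chernoff/Hoeffding estimate $2\exp(-2\lambda^2/t)$ for each fixed $t\le n$, yielding a per-term bound of $2n^{-8}$ and hence an overall $O(n^{-7})=o(n^{-2})$. The paper cites the inequality as ``Chernoff'' and writes the intermediate bound slightly differently, but the decomposition and the constants in the exponent are the same.
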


\begin{proof}
Using the union bound and the Chernoff inequality (see e.g.~\cite{janson2011random}), we obtain
\begin{align*}
     \Prob\left(\left|\sum_{i=1}^T Z_i - T\rho \right| > 2\sqrt{n\log n} \right) & \leq \sum_{k=1
}^n \Prob\left(\left|\sum_{i=1}^k Z_i - k\rho \right| > 2\sqrt{n\log n} \right) \leq \\
     & \leq \sum_{k=1}^n 2\exp\left(\frac{-8n\log n}{k}\right)  \leq 2n\exp(-8\log n) = o(n^{-2}). 
\end{align*}
\end{proof}

We say that an event, depending on $n$,
holds asymptotically almost surely (a.a.s.)
if it holds with probability tending to 1 if $n$
tends to infinity.

\subsection{Positional games tools}
We will use a variant of the Erd\H{o}s-Selfridge Breaker's winning criterion (see e.g.~\cite{beck2008combinatorial, hefetz2014positional}), adapted to the Client-Waiter version by the third author, which can be also applied for Waiter-Client games.

\begin{theorem}[Corollary 1.4 in~\cite{bednarska2013weight}]
\label{thm:WC_transversal}
 Consider a Waiter-Client game on a hypergraph $(\cX,\cF)$ satisfying
	$$\sum_{F\in \cF} 2^{-|F| + 1} < 1.$$
	Then Waiter has a strategy to force Client to
	claim at least one element in each of the
	sets in $\mathcal{F}$. 
\end{theorem}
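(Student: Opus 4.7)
The plan is a weight-function argument in the spirit of the Erd\H{o}s--Selfridge criterion, adapted to Waiter--Client games. For each round $t$, let $U_t$, $C_t$, and $W_t$ denote the sets of free, Client's, and Waiter's elements, and call $F\in\cF$ \emph{alive} at time $t$ if $F\cap C_t=\emptyset$; Waiter loses exactly when some set is still alive once the game ends, so that $F\subseteq W$. Setting $b_t(F)=|F\cap U_t|$, define
\[
\phi_t \;=\; \sum_{F \text{ alive at time } t} 2^{-b_t(F)}.
\]
By hypothesis $\phi_0=\sum_{F\in\cF}2^{-|F|}=\tfrac12\sum_{F\in\cF}2^{-|F|+1}<\tfrac12$, and every alive set at the end of the game contributes $1$ to $\phi$; hence it suffices for Waiter to maintain $\phi_t<1$ throughout.

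I would then analyse one round in terms of the element dangers $d_t(e)=\sum_{F\ni e,\,\text{alive}}2^{-b_t(F)}$ and the pair dangers $s_t(x,y)=\sum_{F\supseteq\{x,y\},\,\text{alive}}2^{-b_t(F)}$. A short case split over alive sets shows that if Waiter offers $\{x,y\}$ and Client keeps $x$ (so that $y$ goes to Waiter), then
\[
\Delta\phi \;=\; -d_t(x)+d_t(y)-s_t(x,y),
\]
with the symmetric formula when Client keeps $y$. Since Client plays adversarially, the worst-case change is $\max(\Delta_x,\Delta_y)=|d_t(x)-d_t(y)|-s_t(x,y)$, and Waiter should offer a pair that drives this quantity to at most $0$.

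Waiter's strategy is therefore to offer, in each round, a pair $\{x,y\}$ satisfying $|d_t(x)-d_t(y)|\le s_t(x,y)$, which forces $\Delta\phi\le 0$. The main obstacle, and the heart of the proof, is showing that such a pair always exists while $\phi_t<1$. The intended route is to pick $x$ of maximum danger (so $d_t(x)\le\phi_t<1$) and then locate a partner $y$ among the elements that share many alive sets with $x$; a double-counting argument on the bipartite incidences between free elements and alive sets, together with the factor-of-$2$ slack in the hypothesis (which gives $\phi_0<\tfrac12$), yields the desired inequality. Iterating, $\phi_t$ stays below $\tfrac12$ across all proper rounds, and the at-most-one forced round with a single remaining free element $e$ raises $\phi$ by at most $d_t(e)\le\phi_t<\tfrac12$, keeping it below $1$. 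Therefore $\phi<1$ at the end of the game, no $F$ is alive, and Client has claimed at least one element of every $F\in\cF$, as claimed.
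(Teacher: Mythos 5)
Your potential-function setup is correct and matches the standard Erd\H{o}s--Selfridge template: the potential $\phi_t = \sum_{\text{alive }F} 2^{-b_t(F)}$ with $\phi_0 < \tfrac12$, the one-round change $\Delta\phi = -d_t(x) + d_t(y) - s_t(x,y)$ when Client keeps $x$, and the worst case $|d_t(x)-d_t(y)|-s_t(x,y)$ are all accurate, and the endgame reasoning (if $\phi<1$ at the end then no $F$ is alive, since an alive $F$ contributes $2^0=1$) is sound.

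The gap is precisely where you say ``the heart of the proof'' lies: you assert, without proof, that whenever $\phi_t<1$ Waiter can offer a pair with $|d_t(x)-d_t(y)|\le s_t(x,y)$. The one-sentence sketch (``pick $x$ of maximum danger\ldots a double-counting argument on the bipartite incidences\ldots yields the desired inequality'') is not an argument, and the claim is in fact false as stated. Take three free elements $x,y,z$ and two alive sets $F_1,F_2$ with $F_1\cap U_t=\{x\}$ and $F_2\cap U_t=\{x,y\}$; then $\phi_t=\tfrac12+\tfrac14=\tfrac34<1$, while $d_t(x)=\tfrac34$, $d_t(y)=\tfrac14$, $d_t(z)=0$, $s_t(x,y)=\tfrac14$, and $s_t(x,z)=s_t(y,z)=0$, so every pair of free elements violates your inequality. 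One might try to rescue the approach by strengthening the invariant to $\phi_t<\tfrac12$ (which does guarantee that every alive set retains at least two free elements), but you have not shown the existence of a balanced pair under this stronger hypothesis either, and the reasoning becomes circular: you keep $\phi_t<\tfrac12$ by proving $\Delta\phi\le 0$, which requires the balanced pair, which is what you are trying to establish. Until the existence lemma is actually proved under some invariant that you can propagate --- or the strategy is replaced by one (such as always offering the two highest-danger elements) whose cumulative increase in $\phi$ over the whole game can be bounded by $\tfrac12$, which is how the factor-of-two slack is typically spent in proofs of this statement --- the argument does not close.
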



\section{Forcing forests}\label{sec:rooted.forests}

For the proof of Theorem~\ref{thm:giventree} we aim to provide 
Waiter's strategy that forces Client to build any fixed $n$-vertex tree $T$ whose 
maximum degree is at most $\left(\frac{1}{3}-\eps\right)n$. For this, we will 
distinguish trees by certain structural properties,
but in any case, the overall strategy for Waiter will then be to first 
force a red copy of a subforest $F\subseteq T$ which contains all vertices of
large degree in $T$, and then she will complete this subforest 
to a copy of $T$ while making use of the structural properties 
mentioned above. Note that in such a procedure, when $F$ is already 
embedded, we also fix the images of some of the vertices in order to be able to embed
the remaining forest $T\setminus E(F)$. This is why we need to 
study games in which Waiter wants to force rooted forests. 
Below we prepare the tools for our main strategy, which is given in 
Section~\ref{sec:proof.linear.degree}, by collecting and proving
several lemmas regarding forcing different kinds of forests and forests with roots. 
A rich collection of lemmas is used to prove Theorems \ref{thm:root1del} and \ref{thm:doubleroot3} -- only these two results, together with Lemma~\ref{lem:doubleroot1v}, are applied in the proof of Theorem~\ref{thm:giventree}.

\subsection{Forcing Hamilton paths and perfect matchings}

The following two lemmas are more quantitative versions of results from \cite{clemens2020fast}
on forcing Hamilton paths with fixed endpoints. 

\begin{lemma}\label{lem:Ham.path.simple}
	For $n\geq 5$, Waiter can force Client to build a red Hamilton path on $K_n$ 
	within $n-1$ rounds and such that one of its endpoints 
	is incident with at most two blue edges. 
\end{lemma}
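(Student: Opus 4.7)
My plan is to spend the first two rounds ``burning'' the edges at a carefully chosen auxiliary vertex $v_1$. After rounds $1$ and $2$, $v_1$ will be an internal vertex of the nascent red path with blue degree $2$, and one of its red neighbours (which I will call $v_2$) will be a ``protected'' endpoint: Waiter refuses to offer any edge incident to $v_2$ until the very last round.

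Concretely, in round $1$ Waiter offers the pair $\{v_1 v_2, v_1 v_3\}$, where $v_1, v_2, v_3$ are three distinct vertices of $K_n$ chosen by Waiter; renaming $v_2, v_3$ if necessary, we may assume that $v_1 v_2$ is picked red and $v_1 v_3$ becomes blue. In round $2$ she offers $\{v_1 v_4, v_1 v_5\}$ for two more fresh vertices $v_4, v_5$, and again WLOG $v_1 v_4$ is red and $v_1 v_5$ is blue. After round $2$ the red graph is the path $v_2 - v_1 - v_4$, the vertex $v_2$ still has blue degree $0$, and $v_1$ is internal with blue degree $2$. In each of the rounds $3,\ldots,n-2$, Waiter extends the red path by offering two edges at the current non-$v_2$ endpoint, each going to a still-missing vertex; in particular no edge incident to $v_2$ is ever offered during this phase. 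In round $n-1$, exactly one vertex $w^\ast$ is still missing from the red path, and Waiter offers $\{v_2 w^\ast, u w^\ast\}$, where $u$ is the current non-$v_2$ endpoint. Both of these edges are free: the first because no $v_2$-edge has been offered, and the second because during the extension phase Waiter only offered edges of the form (current endpoint)--(missing vertex), so in particular $u w^\ast$ has never been offered.

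Whichever edge Client picks in round $n-1$ closes the red path into a Hamilton path on $V(K_n)$. If Client picks $u w^\ast$, then $v_2$ remains an endpoint and its only blue edge is $v_2 w^\ast$, giving blue degree $1 \leq 2$, and we are done. If Client instead picks $v_2 w^\ast$, then $v_2$ becomes internal and the new endpoints are $u$ and $w^\ast$; in this case one needs to have arranged the extension phase so that at least one of $u, w^\ast$ has accumulated at most one blue edge during rounds $3,\ldots,n-2$. This is feasible because Waiter only needs to fill $2(n-4)$ offer-slots from a pool of $n-3$ missing vertices, so she can guarantee that each individual vertex is offered at most twice before being either absorbed into the path or designated as $w^\ast$. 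Handling the smallest values of $n$ (say $n=5,6$) by an explicit case analysis and invoking the fast Hamilton path strategy from \cite{clemens2020fast} for the generic case then completes the argument; the main technical work is precisely this bookkeeping in the extension phase.
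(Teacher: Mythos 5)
Your approach is genuinely different from the paper's, and it has a real gap in the bookkeeping step that you yourself flag as "the main technical work."

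The paper's strategy for rounds $3,\ldots,n-1$ is the reverse orientation of yours: in each round it picks a \emph{new} vertex $w$ not yet on the path and offers the two edges from $w$ to the two current endpoints. Thus $w$ joins the path in the very round it is first touched in the extension phase, picking up exactly one blue edge then, plus at most one more from rounds $1$--$2$ (if $w\in\{v_3,v_5\}$). The last vertex added is the required endpoint with blue degree $\le 2$; no accumulation analysis is needed.

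Your version offers both edges from the \emph{current} non-$v_2$ endpoint to two missing vertices. This means a missing vertex can be offered in several rounds and accumulate several blue edges before it is absorbed, and your round-$(n-1)$ case split genuinely depends on controlling that accumulation. The counting argument you give --- that there are $2(n-4)$ offer slots and $n-3$ missing vertices, hence ``she can guarantee each vertex is offered at most twice'' --- does not follow. Client controls which offered vertex is absorbed and can, against natural Waiter strategies, keep re-offering the same survivor: for example for $n=9$, if Waiter offers fresh pairs as long as possible, Client absorbing the lower-count vertex each round forces a missing vertex to reach three offers. Moreover even if the ``at most twice'' bound held, it would only give $w^\ast$ at most $2$ blue edges from the extension phase, hence possibly $3$ after round $n-1$; the argument works for $u$ (whose last offer is the absorbing red edge) but not symmetrically for $w^\ast$, and you need a further reason that Client cannot force $u$ to be the high-count vertex in round $n-2$.

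The gap is closable within your framework, but by a different argument than the slot-count: have Waiter \emph{reserve} one missing vertex $t^\ast$ and never offer any edge to it during rounds $3,\ldots,n-3$. Then $t^\ast$ is one of the last two missing vertices before round $n-2$ with zero blue edges, so after rounds $n-2$ and $n-1$ it has at most $2$ blue edges, and in every case the path has an endpoint with blue degree $\le 2$. If you go this route, please spell it out --- the claim as written ("offered at most twice") is neither proved nor sufficient. Also note that the final appeal to the fast Hamilton-path result of \cite{clemens2020fast} does not by itself yield the blue-degree condition on an endpoint, so it cannot replace the missing bookkeeping.
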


\begin{proof}
Let $v$ be any vertex of $K_n$. In each of the first two rounds Waiter offers any two free edges incident with $v$. The result is a red path $P$ on three vertices whose endpoints 
are not incident with any other colored edge outside the path. 
Next, in each of the following $n-3$ rounds, let $P$ be the 
current red path, and let $w$ be any vertex not in $P$. 
Then Waiter offers the two edges between $w$ and the endpoints of $P$,
hence extending the red path $P$ by one vertex.
By the end of round $n-1$, the path $P$ is a Hamilton path.
Let $w$ be the vertex which was added in the last round.
Then there are at most two blue edges incident with $w$:
the blue edge that was offered in the last round, and maybe
one more edge if $vw$ was offered within the first two rounds.
\end{proof}

\begin{lemma}\label{lem:Ham.path.ends.fixed}
	Let $x,y\in V(K_n)$ with $n\geq 8$. 
	Then within $n$ rounds Waiter can force Client to build a red Hamilton path 
	between $x$ and $y$ in $K_n$.
\end{lemma}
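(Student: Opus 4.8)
The plan is to use the single extra round (beyond the $n-1$ needed just to force a Hamilton path) to gain control over the two endpoints. First I would reduce to forcing a red Hamilton path of $K_n-y$ with $x$ prescribed as an endpoint: in the first $n-2$ rounds Waiter forces a red Hamilton path $P$ of $K_n-y$ (hence on $n-1$ vertices) one of whose endpoints is $x$. This can be done in the style of Lemma~\ref{lem:Ham.path.simple}: first force a red path on three vertices centred at some vertex $v\notin\{x,y\}$ (two rounds), then repeatedly add a new vertex of $V(K_n)\setminus\{x,y\}$ by offering its two edges to the two current endpoints of the red path, and finally, in round $n-2$, add $x$ by offering the two edges joining $x$ to the two current endpoints. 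Because $x$ is the last vertex offered, it becomes an endpoint of $P$ no matter what Client does; let $u$ be the other endpoint and write $P=x=p_0-p_1-\cdots-p_m=u$ with $m=n-2$.

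In the two remaining rounds Waiter attaches $y$ at the $u$-end by a rotation argument. Suppose $u$ has two uncoloured edges $up_i,up_j$ with $i\neq j$ and $i,j\le m-2$. In round $n-1$ Waiter offers $\{up_i,up_j\}$; if Client takes, say, $up_i$, then the red graph contains both $P$ and the rotated Hamilton path $x-p_1-\cdots-p_i-u-p_{m-1}-\cdots-p_{i+1}$ of $K_n-y$, so it admits two possible far endpoints, $u$ and $p_{i+1}$. In round $n$ Waiter offers $\{yu,\ yp_{i+1}\}$ --- both edges are still free, since no edge at $y$ has ever been offered --- and either choice of Client completes a red Hamilton path of $K_n$ between $x$ and $y$. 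Altogether this uses $(n-2)+1+1=n$ rounds.

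The crux, and what I expect to be the main difficulty, is guaranteeing that such a pair of free edges $up_i,up_j$ at $u$ really exists no matter how Client plays while $P$ is built. Client can try to make $u$ almost saturated with coloured edges, e.g.\ by always extending the same side so that an arm of the initial three-vertex path lingers as an endpoint and collects a blue edge in every round. A count of the coloured edges at $u$ readily gives at least one free edge into $\{p_0,\dots,p_{m-2}\}$, but obtaining a second one seems to require either a more careful construction of $P$ --- for instance never extending the $x$-side and keeping one vertex in reserve to be inserted last, so that the far endpoint is always a recently added, hence ``clean'', vertex --- or a separate argument that in the bad case exploits a structural free edge, such as the edge between the two arms of the initial three-vertex path, which is never offered and is therefore always free. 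I would organise the proof around making this endpoint flexibility robust against Client; I expect this bookkeeping, rather than the rotation idea itself, to be the technical heart of the proof, and the hypothesis $n\ge 8$ is what leaves room for the initial path and for the various ``at least five vertices''-type estimates used along the way.
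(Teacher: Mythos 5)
Your overall plan --- build a red Hamilton path, spend one round rotating it at an endpoint so that there are two candidate far endpoints, and use the last round to attach the remaining prescribed vertex --- is the same basic idea as the paper's proof. But the order in which you commit $x$ and $y$ creates exactly the difficulty you flag at the end, and the fixes you sketch do not resolve it; this is a genuine gap.

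Concretely, your rotation is performed at the far endpoint $u$ of the Hamilton path of $K_n-y$, and you need two free edges $up_i,up_j$ with $1\le i,j\le m-2$. Client can force $u$ to have only one. Let $u=e_1$ be one of the two degree-$1$ endpoints of the initial three-vertex path $e_1-v-e_3$; whenever you offer $e_1w$ and $qw$ (with $q$ the other endpoint), Client colours $qw$ red and $e_1w$ blue, so $e_1$ survives as an endpoint and gains a blue edge in every extension round. After round $n-3$, $e_1$ has one red edge $e_1v$ and $n-5$ blue edges, leaving $e_1e_3$ as its unique free edge into $V(K_n)\setminus\{x,y\}$. In round $n-2$ Client then colours $xw_{n-3}$ red (and $xe_1$ blue), so that $u=e_1$; now $u$ has exactly one free edge of the required form, not two, and round $n-1$ cannot be played as you describe. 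The ``structural free edge'' $e_1e_3$ you mention is real, but it is the only one.

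The paper sidesteps this by a different order of attachment. Via Lemma~\ref{lem:Ham.path.simple} it forces, in $n-3$ rounds, a Hamilton path $(v_1,\dots,v_{n-2})$ on $K_n-\{x,y\}$ one of whose endpoints, $v_1$ (the vertex added last), carries at most two blue edges. In round $n-2$ it offers $xv_{n-2}$ and $yv_{n-2}$, attaching (say) $y$ to the other, possibly saturated, endpoint $v_{n-2}$. The rotation is then done at the controlled endpoint $v_1$: since $v_1$ has one red and at most two blue edges, for $n\ge 8$ at least two of $v_1v_3,\dots,v_1v_{n-2}$ are still free. After Client reddens some $v_1v_i$, round $n$ offers $xv_1$ and $xv_{i-1}$, and either choice completes a red Hamilton path from $x$ to $y$. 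The crucial point is that the rotation happens at the endpoint whose blue degree Waiter has bounded, while the uncontrolled endpoint is joined to a still-fresh prescribed vertex. Reordering your argument in this way --- build on $K_n-\{x,y\}$, attach $y$ to the dirty end, rotate at the clean end, then attach $x$ --- fills the gap; as written, it remains.
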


\begin{proof}
Using Lemma~\ref{lem:Ham.path.simple}, within $n-3$ first rounds
Waiter can force a red Hamilton path $P=(v_1,v_2,\ldots,v_{n-2})$ 
on $K_n-\{x,y\}$ in which $v_1$ is incident with at most
two blue edges. In round $n-2$ she offers the edges $xv_{n-2}$ and $yv_{n-2}$,
and without loss of generality we can assume that Client claims $v_{n-2}y$. Let $i,j$, with $3\leq i,j\leq n-2$, be such that $v_1v_i$ and $v_1v_j$ are free. This is possible since so far $v_1$ is incident with at most two blue edges. Then in round $n-1$ Waiter offers edges $v_1v_i$ and $v_1v_j$, and again without loss of generality we can assume that Client 
claims the edge $v_1v_i$. Finally, in round $n$, Waiter offers the edges
$xv_1$ and $xv_{i-1}$. It is easy to see that no matter which edge Client claims, 
a red Hamilton path is built as required.
\end{proof}

The next lemma shows that Waiter can force quickly a perfect matching in an almost complete bipartite graph (with equal bipartition).  

\begin{lemma}\label{lem:perfect_matching}
Let $t\ge 4$ and $K_{t,t}^{-}$ be the graph obtained from the complete bipartite graph $K_{t,t}$ by removing one edge.
By offering edges of $K_{t,t}^-$ only, Waiter can force a red copy of a perfect matching in $K_{t,t}^{-}$ within $t+1$ rounds.
\end{lemma}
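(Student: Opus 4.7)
The plan is to use greedy pair-matching for the first $t-1$ rounds, followed by a careful two-round completion via an augmenting path of length three. Let $A=\{a_1,\dots,a_t\}$, $B=\{b_1,\dots,b_t\}$, and assume the missing edge is $a_1b_1$.

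\emph{Phase 1 (rounds $1$ to $t-1$).} In round $i$, Waiter picks the next unmatched $a_i\in A$ (beginning with $a_1$, so as to remove $a_1$ from the picture early) and offers two edges of the form $a_ib_x,a_ib_y$ with $b_x,b_y$ unmatched. The choice of $b_x,b_y$ follows two guidelines: avoid $b_1$ whenever possible, and, among the remaining unmatched $B$-vertices, pick those that Waiter has offered fewest times so far. The first guideline ensures no offered edge is the missing one and keeps the blue degree of $b_1$ small; the second spreads Waiter's "blue" edges over $B$ as evenly as possible. After $t-1$ rounds, exactly one $a_t\in A$ and one $b^*\in B$ remain unmatched, together with a red matching $M=\{a_ib_{\sigma(i)}\}_{i=1}^{t-1}$, and in addition (since $a_1$ was matched in round $1$) we have $a_t\neq a_1$, so that $a_tb^*$, $a_tb_{\sigma(\cdot)}$ are all edges of $K_{t,t}^-$.

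\emph{Phase 2 (rounds $t$ and $t+1$).} Using the balancing property, I will verify that one can choose two distinct matched indices $k,k'\in\{1,\dots,t-1\}$ such that the edges $a_kb^*$ and $a_{k'}b^*$ are both still free and both lie in $K_{t,t}^-$ (so $k,k'\ne 1$ if $b^*=b_1$). In round $t$, Waiter offers $\{a_tb_{\sigma(k)},a_tb_{\sigma(k')}\}$. Depending on which edge Client takes, Waiter passes to round $t+1$ as follows: if Client took $a_tb_{\sigma(k)}$, then Waiter offers $\{a_kb^*,a_tb^*\}$, and analogously for the other case. Note that $a_tb^*$ was not offered before, so it is still free. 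The point is that both edges in this pair complete a red perfect matching: taking $a_kb^*$ gives the augment $M\setminus\{a_kb_{\sigma(k)}\}\cup\{a_tb_{\sigma(k)},a_kb^*\}$, while taking $a_tb^*$ gives the direct matching $M\cup\{a_tb^*\}$ (the extra red edge $a_tb_{\sigma(k)}$ is simply ignored). Hence Waiter wins no matter which edge Client chooses.

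\emph{Main obstacle.} The hard part is to guarantee, at the end of Phase 1, the existence of two distinct matched indices $k,k'$ with $a_kb^*$ and $a_{k'}b^*$ both free. This amounts to controlling the blue degree of the eventual leftover vertex $b^*$. The balancing rule in Phase 1 bounds this degree because the total number of "blue" offers into $B$ is exactly $t-1$; by always offering the least-loaded unmatched $B$-vertices (and by avoiding $b_1$ so that $b^*=b_1$ does not accumulate blue edges), Waiter can force the blue degree of $b^*$ to be at most $t-3$ (respectively $t-4$ if $b^*=b_1$), which leaves at least two matched $a$-vertices with a free edge to $b^*$ as needed. Verifying this bound for all $t\ge 4$ is where the bulk of the technical work lies; for very small $t$ (essentially $t=4$) one may need a direct case analysis on Client's responses to reach the conclusion, but the high-level structure of the strategy is the one outlined above.
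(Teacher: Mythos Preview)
Your Phase~2 argument is fine, but the heart of the plan---the Phase~1 balancing claim---is wrong as stated, and not only for $t=4$. Take $t=5$ with missing edge $a_1b_1$. Following your rules (avoid $b_1$, offer the two least-loaded unmatched $B$-vertices), a possible run is:
\begin{itemize}
\item Round 1: offer $a_1b_2,a_1b_3$; Client takes $a_1b_2$.
\item Round 2: unmatched non-$b_1$ are $b_3,b_4,b_5$ with loads $1,0,0$; offer $a_2b_4,a_2b_5$; Client takes $a_2b_4$.
\item Round 3: unmatched non-$b_1$ are $b_3,b_5$ (both load $1$); offer $a_3b_3,a_3b_5$; Client takes $a_3b_3$.
\item Round 4: unmatched are $b_1,b_5$; offer $a_4b_1,a_4b_5$; Client takes $a_4b_1$.
\end{itemize}
Now $b^\ast=b_5$ and the edges $a_2b_5,a_3b_5,a_4b_5$ are all blue, so the only free edge from a matched $A$-vertex to $b^\ast$ is $a_1b_5$. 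You need two such indices, and you only get one; Phase~2 cannot start. The same adversarial idea (Client always rejects one fixed non-$b_1$ vertex) defeats your balancing for every $t\ge 5$: the leftover vertex ends up with blue degree $t-2$, not $t-3$. The gap is structural: offering a cherry at $a_i$ in every round lets Client funnel all $t-1$ blue edges to a single surviving $B$-vertex, and ``least-loaded'' does nothing once only two candidates remain.

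The paper avoids this entirely by induction. In the first round Waiter offers a \emph{cross pair} $a_1b_2$ and $a_2b_1$ (not a cherry). Whichever edge Client takes, one vertex from each side is matched and the rejected edge becomes the ``missing'' edge of a $K_{t-1,t-1}^{-}$ on the remaining vertices; by induction this finishes in $t$ more rounds, for $t+1$ total. The base case $t=4$ is handled by an explicit five-round strategy. The key difference is that the paper never tries to bound the blue degree of a leftover vertex; it sidesteps the obstacle you identified by reducing the problem rather than balancing it.
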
 

\begin{proof}

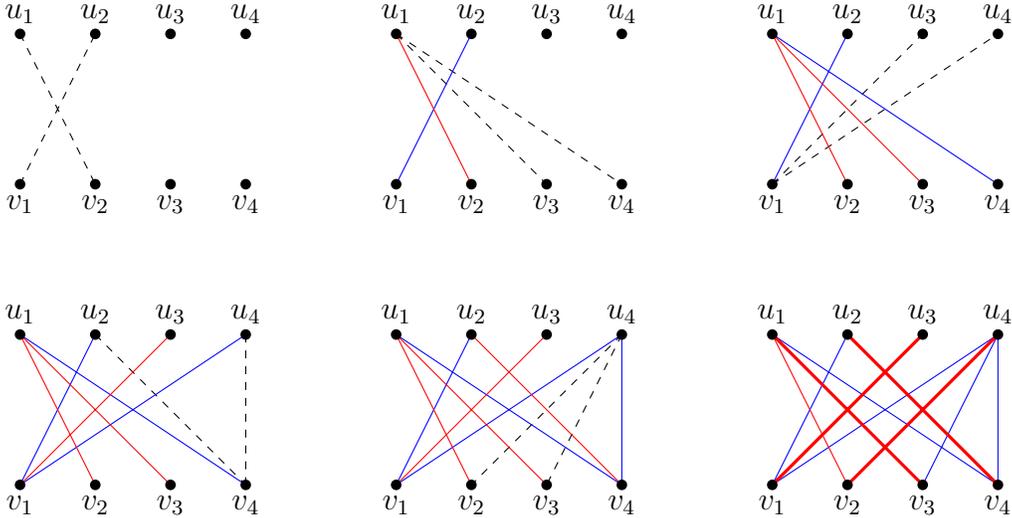
\begin{figure}[H]
    \begin{tikzpicture}
        \centering
        \coordinate (u1) at (0,1);
        \coordinate (u2) at (1,1);
        \coordinate (u3) at (2,1);
        \coordinate (u4) at (3,1);
        \coordinate (v1) at (0,-1);
        \coordinate (v2) at (1,-1);
        \coordinate (v3) at (2,-1);
        \coordinate (v4) at (3,-1);
        \begin{pgfonlayer}{front}
            \draw[dashed] (v1) -- (u2) (u1) -- (v2); 
            \foreach \i in {v1,v2,v3,v4,u1,u2,u3,u4} \fill (\i) circle (2pt);	
            \node at (0,1) [above] {$u_1$};
            \node at (1,1) [above] {$u_2$};
            \node at (2,1) [above] {$u_3$};
            \node at (3,1) [above] {$u_4$};
            \node at (0,-1) [below] {$v_1$};
            \node at (1,-1) [below] {$v_2$};
            \node at (2,-1) [below] {$v_3$};
            \node at (3,-1) [below] {$v_4$};
        \end{pgfonlayer}

        \coordinate (u1) at (5,1);
        \coordinate (u2) at (6,1);
        \coordinate (u3) at (7,1);
        \coordinate (u4) at (8,1);
        \coordinate (v1) at (5,-1);
        \coordinate (v2) at (6,-1);
        \coordinate (v3) at (7,-1);
        \coordinate (v4) at (8,-1);
        \begin{pgfonlayer}{front}
            \draw[dashed] (u1) -- (v3) (u1) -- (v4);
            \draw[blue] (v1) -- (u2);
            \draw[red] (u1) -- (v2); 
            \foreach \i in {v1,v2,v3,v4,u1,u2,u3,u4} \fill (\i) circle (2pt);	
            \node at (5,1) [above] {$u_1$};
            \node at (6,1) [above] {$u_2$};
            \node at (7,1) [above] {$u_3$};
            \node at (8,1) [above] {$u_4$};
            \node at (5,-1) [below] {$v_1$};
            \node at (6,-1) [below] {$v_2$};
            \node at (7,-1) [below] {$v_3$};
            \node at (8,-1) [below] {$v_4$};
        \end{pgfonlayer}

        \coordinate (u1) at (10,1);
        \coordinate (u2) at (11,1);
        \coordinate (u3) at (12,1);
        \coordinate (u4) at (13,1);
        \coordinate (v1) at (10,-1);
        \coordinate (v2) at (11,-1);
        \coordinate (v3) at (12,-1);
        \coordinate (v4) at (13,-1);
        \begin{pgfonlayer}{front}
            \draw[dashed] (v1) -- (u3) (v1) -- (u4);
            \draw[blue] (v1) -- (u2) (u1) -- (v4);
            \draw[red] (u1) -- (v2) (u1) -- (v3); 
            \foreach \i in {v1,v2,v3,v4,u1,u2,u3,u4} \fill (\i) circle (2pt);	
            \node at (10,1) [above] {$u_1$};
            \node at (11,1) [above] {$u_2$};
            \node at (12,1) [above] {$u_3$};
            \node at (13,1) [above] {$u_4$};
            \node at (10,-1) [below] {$v_1$};
            \node at (11,-1) [below] {$v_2$};
            \node at (12,-1) [below] {$v_3$};
            \node at (13,-1) [below] {$v_4$};
        \end{pgfonlayer}

        \coordinate (u1) at (0,-3);
        \coordinate (u2) at (1,-3);
        \coordinate (u3) at (2,-3);
        \coordinate (u4) at (3,-3);
        \coordinate (v1) at (0,-5);
        \coordinate (v2) at (1,-5);
        \coordinate (v3) at (2,-5);
        \coordinate (v4) at (3,-5);
        \begin{pgfonlayer}{front}	
            \draw[dashed] (u2) -- (v4) (u4) -- (v4);
            \draw[blue] (v1) -- (u2) (u1) -- (v4) (v1) -- (u4);
            \draw[red] (u1) -- (v2) (u1) -- (v3) (v1) -- (u3); 
            \foreach \i in {v1,v2,v3,v4,u1,u2,u3,u4} \fill (\i) circle (2pt);
            \node at (0,-3) [above] {$u_1$};
            \node at (1,-3) [above] {$u_2$};
            \node at (2,-3) [above] {$u_3$};
            \node at (3,-3) [above] {$u_4$};
            \node at (0,-5) [below] {$v_1$};
            \node at (1,-5) [below] {$v_2$};
            \node at (2,-5) [below] {$v_3$};
            \node at (3,-5) [below] {$v_4$};
        \end{pgfonlayer}

        \coordinate (u1) at (5,-3);
        \coordinate (u2) at (6,-3);
        \coordinate (u3) at (7,-3);
        \coordinate (u4) at (8,-3);
        \coordinate (v1) at (5,-5);
        \coordinate (v2) at (6,-5);
        \coordinate (v3) at (7,-5);
        \coordinate (v4) at (8,-5);
        \begin{pgfonlayer}{front}
            \draw[dashed] (u4) -- (v2) (u4) -- (v3);
            \draw[blue] (v1) -- (u2) (u1) -- (v4) (v1) -- (u4) (u4) -- (v4);
            \draw[red] (u1) -- (v2) (u1) -- (v3) (v1) -- (u3) (u2) -- (v4);
            \foreach \i in {v1,v2,v3,v4,u1,u2,u3,u4} \fill (\i) circle (2pt);
            \node at (5,-3) [above] {$u_1$};
            \node at (6,-3) [above] {$u_2$};
            \node at (7,-3) [above] {$u_3$};
            \node at (8,-3) [above] {$u_4$};
            \node at (5,-5) [below] {$v_1$};
            \node at (6,-5) [below] {$v_2$};
            \node at (7,-5) [below] {$v_3$};
            \node at (8,-5) [below] {$v_4$};
        \end{pgfonlayer}

        \coordinate (u1) at (10,-3);
        \coordinate (u2) at (11,-3);
        \coordinate (u3) at (12,-3);
        \coordinate (u4) at (13,-3);
        \coordinate (v1) at (10,-5);
        \coordinate (v2) at (11,-5);
        \coordinate (v3) at (12,-5);
        \coordinate (v4) at (13,-5);
        \begin{pgfonlayer}{front}
            \draw[blue] (v1) -- (u2) (u1) -- (v4) (v1) -- (u4) (u4) -- (v4) (u4) -- (v3);
            \draw[red] (u1) -- (v2);
            \draw[red, very thick] (u1) -- (v3) (v1) -- (u3) (u2) -- (v4) (u4) -- (v2);
            \foreach \i in {v1,v2,v3,v4,u1,u2,u3,u4} \fill (\i) circle (2pt);	
            \node at (10,-3) [above] {$u_1$};
            \node at (11,-3) [above] {$u_2$};
            \node at (12,-3) [above] {$u_3$};
            \node at (13,-3) [above] {$u_4$};
            \node at (10,-5) [below] {$v_1$};
            \node at (11,-5) [below] {$v_2$};
            \node at (12,-5) [below] {$v_3$};
            \node at (13,-5) [below] {$v_4$};
        \end{pgfonlayer}
    \end{tikzpicture}
\caption{\label{Fig:M_4vs.K_4,4}In order to build a perfect matching in $K_{4,4}^{-}$, in each round Waiter offers the two dashed edges.}
\end{figure}

We will use induction. First consider the case $t=4$.
Let $\{u_1,u_2,u_3,u_4\}$ and $\{v_1,v_2,v_3,v_4\}$ be the bipartition classes of $K_{4,4}^-$. Assume that $u_1v_1$ is the missing edge. Waiter plays in such a way that in each round Client's choices are irrelevant for the obtained structure of red and blue graphs, but they do have impact on the 
vertex labels in the edges offered by Waiter. 
(see Figure~\ref{Fig:M_4vs.K_4,4}).

Waiter starts by offering the edges $u_1v_2$ and $u_2v_1$. By symmetry we can assume that  $u_1v_2$ becomes red. Then Waiter offers the edges $u_1v_3$ and $u_1v_4$. Client chooses one of them, say $u_1v_3$. Then Waiter offers the edges $u_3v_1$ and $u_4v_1$. Again we can assume that Client colors for instance $u_3v_1$ red. Next Waiter offers the edges $u_2v_4$ and $u_4v_4$. Without loss of generality Client colors $u_2v_4$ red. Finally, Waiter offers the edges $u_4v_2$ and $u_4v_3$. Client colors any of them red, say $u_4v_2$, and we get the desired red matching, namely $\{u_1v_3, u_2v_4, u_3v_1, u_4v_2\}$, within 5 rounds.

Now for $t>4$, suppose that $uv$ is the missing edge. Then in the first round Waiter offers one edge incident with $u$, and the other incident with $v$, say $uv'$ and $vu'$. By symmetry we can assume that Client colors $uv'$ red. Then we remove from our graph the vertices $u$ and $v'$ and the blue edge $vu'$, and the problem reduces to forcing a perfect matching in $K_{t-1,t-1}^-$ (now $vu'$ is the missing edge), which by inductive assumption can be done within $t$ rounds. Altogether, we get a perfect matching in $K_{t,t}^-$ within $t+1$ rounds. 
\end{proof}

\subsection{Rooted forest-tuples}

We begin by defining a couple of technical notions used extensively throughout the remaining part of this section.

\begin{definition}

Let $F_1,F_2,\ldots,F_{\ell}$ be vertex-disjoint rooted forests (i.e.~each component has exactly one root), such that $e(F_i)>0$ for some $i$. We call the tuple $F=(F_1,F_2,\ldots,F_{\ell})$ a rooted forest-tuple,
and define $V(F) = \bigcup_{i\in[\ell]} V(F_i)$, $v(F) = |V(F)|$,  $E(F) = \bigcup_{i\in[\ell]} E(F_i)$, $e(F) = |E(F)|$, and
\begin{align*}
a(F) & = \max \{ e(F_i):~ i\in [\ell]\}, \\
k(F) & = |\{i\in [\ell]:~ e(F_i)=a(F)\}|, \\
t(F) & = |\{i\in [\ell]: e(F_i)>0\}|.   
\end{align*}

Moreover, we define the following:
\begin{enumerate}
\item[(i)] 
Let $\ell'\le \ell$. We call a rooted forest-tuple $F'=(F_1',F_2',\ldots,F_{\ell'}')$ a \textit{valid subforest} of $F$ if for every $i\in [\ell']$,
we have $F_i'\subseteq F_i$ and every root of $F_i'$ is a root of $F_i$.
Furthermore, we assume that a rooted forest with no edges consisting of $\ell$ roots of $F$ is also 
a valid subforest of $F$.  

\item[(ii)\label{def:forest_diff}] 
For a valid subforest $F'=(F_1',F_2',\ldots,F_{\ell}')$ of $F$, 
we define $F-F':=(F_1^\ast,F_2^\ast,\ldots,F_{\ell}^\ast)$ to be a~rooted forest-tuple in which for every $i\in [\ell]$, $F_i^\ast$ is a rooted forest such that $V(F_i^\ast)=V(F_i)$, $E(F_i^\ast)=E(F_i)\setminus E(F_i')$ and  the root set of $F_i^\ast$ is $V(F_i')$.
If  $e$ is an edge incident to a root in $F_j$, then by $F-e$ we mean the rooted forest-tuple $F-F'$, where
$F'_j$ has only one edge $e$ and the remaining forests of $F'$ have no edges.  

\item[(iii)] 
We say that $F$ is \textit{suitable} if  
$3a(F) + k(F) - e(F) \leq 2$ or if $a(F)=1$ and $t(F)\ge 4$.
\item[(iv)] 
We say that $F$ is $m$-suitable if it is a valid subforest of a suitable forest-tuple $F'$ with $e(F')=m$.
\item[(v)] 
We say that $F$ is \textit{simple} if $t(F)\ge 4$ and either $a(F)=2$ and $k(F)=1$, or $a(F)=1$.

\end{enumerate}
\end{definition}

Our main goal is to analyze whether Waiter can force a red copy of a forest $F$, based only on the sizes of the trees in $F$. We will show that if $F$ (or rather a forest-tuple made from $F$) is $m$-suitable, which roughly speaking means that every tree in $F$ has less than $m/3$ edges and $e(F)\le m$, then Waiter can force a red copy of $F$ on the board $K_{m+r}$, where $r=v(F)-e(F)$ is the number of roots. However, in some cases it is useful to have some additional properties under control, for example the number of blue edges between groups of red trees. This is why we do not simply divide a forest $F$ into trees, but rather into forests. Consequently, we will treat $F$ as a forest-tuple and consider sizes of the tuple forests instead of sizes of the trees. 

With a slight abuse of notation we will sometimes denote by $F$ also the rooted graph $F_1\dcup F_2 \dcup \ldots \dcup F_{\ell}$ corresponding to the rooted forest-tuple $F=(F_1,F_2,\dots,F_\ell)$.
Before we proceed, let us focus on some properties of suitable and $m$-suitable rooted forest-tuples.



\begin{lemma}\label{vier}
    If a rooted forest-tuple $F$  is suitable, then $t(F)\ge 4$.
\end{lemma}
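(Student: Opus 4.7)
The plan is to case-split on which of the two clauses in the definition of \emph{suitable} is responsible: either $3a(F)+k(F)-e(F)\le 2$, or $a(F)=1$ together with $t(F)\ge 4$. In the second clause the conclusion $t(F)\ge 4$ is part of the hypothesis, so the whole work lies in the first clause. Throughout I will write $a=a(F)$, $k=k(F)$, $t=t(F)$, $e=e(F)$ for brevity, and use the standing assumption that $e>0$, which guarantees $a\ge 1$ and $k\ge 1$.

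The key step is to upper-bound $e$ in terms of $a$, $k$ and $t$. Exactly $k$ of the forests $F_i$ contribute $a$ edges each; among the remaining $t-k$ forests with $e(F_i)>0$, each contributes at most $a-1$ edges (the other $\ell-t$ forests contribute nothing). This gives
\begin{equation*}
e \;\le\; ka + (t-k)(a-1) \;=\; t(a-1) + k.
\end{equation*}
Combining this with the suitability inequality $e\ge 3a+k-2$ and cancelling $k$ yields
\begin{equation*}
3a - 2 \;\le\; t(a-1).
\end{equation*}

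Now I would handle the edge case $a=1$ inside this clause: the inequality would read $1\le 0$, a contradiction, so the first clause forces $a\ge 2$. (Equivalently, if $a=1$ then every $e(F_i)$ is $0$ or $1$, so $e=t=k$, making $3a+k-e=3>2$.) With $a\ge 2$ in hand, the displayed inequality rearranges to
\begin{equation*}
t \;\ge\; \frac{3a-2}{a-1} \;=\; 3 + \frac{1}{a-1} \;>\; 3,
\end{equation*}
and since $t$ is an integer this gives $t\ge 4$, as required.

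The only delicate point is the exclusion of $a=1$ from the first clause — it is what makes the denominator $a-1$ harmless in the final bound — but once that is dispatched the argument is a one-line counting estimate. I do not anticipate any genuine obstacle: the lemma is really a sanity check confirming that the quantitative definition of "suitable" already encodes having at least four nonempty forests in the tuple, which is used implicitly in later lemmas that treat $t(F)\ge 4$ as a structural assumption.
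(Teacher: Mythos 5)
Your proof is correct and uses essentially the same counting argument as the paper: bounding $e(F)$ above by $k(F)a(F)+(t(F)-k(F))(a(F)-1)$ and playing this against the suitability inequality $e(F)\ge 3a(F)+k(F)-2$. The paper phrases it as a contradiction from the assumption $t(F)\le 3$ (plugging $3$ in place of $t$ in the bound), whereas you solve the inequality for $t$ directly and dispose of $a=1$ explicitly; these are presentational variants of the same idea.
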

\begin{proof}
    Suppose that $F$ is suitable and $t(F)\le 3$. Then $e(F) \ge 3a(F)+k(F)-2$. However, there are $k(F)$ forests with $a(F)$ edges and the remaining ones have at most $a(F)-1$ edges. Hence 
    \[e(F) \le k(F)a(F) + (3-k(F))(a(F)-1) = 3a(F)+k(F)-3\]
    and we reach a contradiction. 
\end{proof}

\begin{lemma}\label{suitable}
    If a rooted forest-tuple $F$ satisfies $a(F)<e(F)/3$, then $F$ is suitable.
\end{lemma}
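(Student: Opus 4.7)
The plan is to split into two cases according to the value of $a(F)$. First I would note that since some $F_i$ has an edge, $a(F) \ge 1$, and since $a(F)$ and $e(F)$ are integers, the strict inequality $a(F) < e(F)/3$ forces $e(F) \ge 3a(F) + 1$.

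In the case $a(F) = 1$, every nonempty forest in the tuple has exactly one edge, so $t(F)$ equals the number of edge-contributing forests, which in turn equals $e(F)$. Since $e(F) \ge 4$, we obtain $t(F) \ge 4$, and therefore $F$ is suitable via the second clause of the definition.

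In the case $a(F) \ge 2$, I would aim to verify the first clause, namely $3a(F) + k(F) - e(F) \le 2$. The natural bound is that the $k(F)$ forests of maximum size together contribute $k(F)\cdot a(F)$ edges, hence $k(F) \le \lfloor e(F)/a(F)\rfloor$. Writing $e(F) = qa(F) + r$ with $0 \le r < a(F)$, the hypothesis $e(F) \ge 3a(F)+1$ leaves only two sub-possibilities: $q \ge 4$, or $q = 3$ and $r \ge 1$. In either subcase a direct check gives $q(a(F)-1) + r \ge 3a(F) - 2$, which rearranges to $k(F) \le q \le e(F) - 3a(F) + 2$, exactly as required.

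The only mildly delicate point is the tight borderline $a(F) = 2$, $e(F) = 7$: there the purely fractional estimate $k(F) \le 7/2$ does not by itself yield $k(F) \le 3$, so one must use the integer floor $\lfloor e(F)/a(F)\rfloor$ throughout the argument. With that minor adjustment in place, no genuine obstacle remains.
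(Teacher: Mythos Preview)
Your argument is correct. The organization differs from the paper's but the content is at the same elementary level.

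The paper splits on $k(F)$: when $k(F)\le 3$ the bound $e(F)\ge 3a(F)+1$ immediately gives $3a(F)+k(F)-e(F)\le 2$, and when $k(F)\ge 4$ it uses the factorization $3a(F)+k(F)-a(F)k(F)=(k(F)-3)(1-a(F))+3$ together with $e(F)\ge a(F)k(F)$ to get $3a(F)+k(F)-e(F)\le 4-a(F)$, after which the two subcases $a(F)\ge 2$ and $a(F)=1$ are handled exactly as you handle them. You instead split first on $a(F)$ and, for $a(F)\ge 2$, go through the Euclidean decomposition $e(F)=qa(F)+r$ to bound $k(F)\le q\le e(F)-3a(F)+2$. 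Both routes rely on the same two facts, namely $e(F)\ge 3a(F)+1$ and $e(F)\ge a(F)k(F)$; the paper's factorization avoids the small subcase analysis on $q$, while your version makes the role of integrality more explicit.

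One minor wording quibble: in your borderline remark, the inequality $k(F)\le 7/2$ \emph{does} force $k(F)\le 3$ since $k(F)$ is an integer; what actually fails at $a(F)=2$, $e(F)=7$ is the real-valued chain $e(F)/a(F)\le e(F)-3a(F)+2$, not the conclusion about $k(F)$. Your use of $q=\lfloor e(F)/a(F)\rfloor$ fixes this correctly.
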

\begin{proof}
    If $k(F)\le 3$, then $3a(F)+k(F)-e(F)\le 3a(F)+3-(3a(F)+1)=2$, so $F$ is suitable.
    Hence, suppose that $k(F)\ge 4$. Using the fact that $e(F)\ge a(F)k(F)$ and $1-a(F)\leq 0$, we get
    $$3a(F)+k(F)-e(F)\le 3a(F)+k(F)- a(F)k(F)=(k(F)-3)(1-a(F))+3\le 4-a(F).$$
    If $a(F)\ge 2$, then the inequality $3a(F)+k(F)-e(F)\le 2$ holds. Otherwise we have $a(F)=1$ and $t(F) = e(F) > 3$ by the assumption of the Lemma. Hence, in both cases $F$ is suitable.
\end{proof}


\begin{lemma}\label{nsuitform}
 Let $m\in{\mathbb N}$ and $F$ be a rooted forest-tuple. 
\begin{itemize}
\item
If $F$ is $m$-suitable, then $3a(F) + k(F) - m \leq 2$ or $a(F)=1$. 
\item
If $m\ge \max\{4,e(F)\}$ and either $3a(F) + k(F) - m \leq 2$ or $a(F)=1$, then $F$ is $m$-suitable. 
\end{itemize}
\end{lemma}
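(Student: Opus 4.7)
The plan is to verify the two implications separately, using the definition: $F$ is $m$-suitable iff there exists a suitable forest-tuple $F'$ with $e(F') = m$ such that $F$ is a valid subforest of $F'$. The basic observations, used throughout, are that if $F$ is a valid subforest of $F'$ then $a(F) \le a(F')$ and $e(F) \le e(F') = m$.

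For the forward implication, let $F'$ be a witness to $m$-suitability. If $a(F') = 1$, then since $a(F) \le 1$ and $e(F) \ge 1$ we get $a(F) = 1$, as required. Otherwise $F'$ satisfies $3a(F') + k(F') - m \le 2$. In the subcase $a(F) = a(F')$, each $F_i$ with $e(F_i) = a(F)$ sits inside an $F_i'$ with $e(F_i') \ge a(F) = a(F')$, hence $e(F_i') = a(F')$; distinct indices yield distinct such components of $F'$, so $k(F) \le k(F')$, and we conclude $3a(F) + k(F) \le 3a(F') + k(F') \le m + 2$. The main subcase is $a(F) < a(F')$ with $a(F) \ge 2$: the edge-count inequality $k(F) \cdot a(F) \le e(F) \le m$ gives $k(F) \le m/a(F)$, while $a(F') \ge a(F) + 1$, $k(F') \ge 1$, and the suitability bound $m \ge 3a(F') + k(F') - 2$ together yield $m \ge 3a(F) + 2$. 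An elementary rearrangement then shows $3a(F) + m/a(F) \le m + 2$ whenever $a(F) \ge 2$ and $m \ge 3a(F) + 2$; indeed the inequality reduces to $m \ge 3a(F) + a(F)/(a(F) - 1)$, and $a/(a-1) \le 2$ for $a \ge 2$.

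For the reverse implication, we construct a suitable witness $F'$ that contains $F$ as a valid subforest. If $a(F) = 1$, then every component of $F$ has at most one edge, so $t(F) = e(F)$; append $m - e(F) \ge 0$ new components, each consisting of a single edge. The resulting $F'$ has $e(F') = m$, $a(F') = 1$, and $t(F') = m \ge 4$, so $F'$ is suitable. If instead $3a(F) + k(F) - m \le 2$ and $a(F) \ge 2$, append $m - e(F) \ge 0$ edges distributed among new components each of size at most $a(F) - 1$ (possible since $a(F) - 1 \ge 1$). The resulting $F'$ still has $a(F') = a(F)$ and $k(F') = k(F)$, and therefore $3a(F') + k(F') - e(F') = 3a(F) + k(F) - m \le 2$, so $F'$ is suitable as needed.

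The main obstacle is the subcase of the forward implication with $a(F) < a(F')$, where a drop in the maximum component size could a priori allow $k(F)$ to exceed $k(F')$; closing the gap requires combining the suitability constraint on $F'$ with the coarse edge-counting bound $k(F) \le m/a(F)$ and the inequality $a(F)/(a(F)-1) \le 2$ for $a(F) \ge 2$.
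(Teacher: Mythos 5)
Your proof is correct, and the reverse implication and the subcase $a(F)=a(F')$ of the forward implication proceed essentially as in the paper (you do helpfully spell out why $k(F)\le k(F')$, which the paper asserts without proof). The genuine difference is in the forward implication's subcase $a(F)<a(F')$. The paper splits this into $k(F)\ge 4$ and $k(F)<4$: for the former it invokes Lemma~\ref{suitable} (via $3a(F)<k(F)a(F)\le e(F)$), and for the latter it chains inequalities through $a(F')$. You instead give a single arithmetic argument: the crude bound $k(F)\le m/a(F)$ plus the observations $a(F')\ge a(F)+1$, $k(F')\ge 1$ give $m\ge 3a(F)+2$, and the inequality $3a(F)+m/a(F)\le m+2$ reduces to $m\ge 3a(F)+a(F)/(a(F)-1)$, which holds once $a(F)\ge 2$ and $m\ge 3a(F)+2$. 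This is a cleaner, self-contained route that avoids the dependence on Lemma~\ref{suitable} and the extra case split, at the modest cost of a small algebraic manipulation. Both approaches are correct; yours trades a cited lemma for a calculation.
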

\begin{proof}
    Suppose first that $F$ is $m$-suitable. Then there exists a forest-tuple $F'\supseteq F$ with $m$ edges satisfying $3a(F')+k(F')-m\le 2$ or $a(F')=1$. If $a(F')= 1$, then $a(F)\le a(F')=1$ and we are done. 
    
    Hence assume that $3a(F')+k(F')-m\le 2$. If $a(F)=a(F')$, then $k(F)\le k(F')$, so $3a(F)+k(F)-m\le 3a(F')+k(F')-m\le 2$. Therefore suppose that $a(F)<a(F')$. If $k(F)\ge 4$, then $3a(F)<k(F)a(F)\le e(F)$, therefore, by Lemma~\ref{suitable}, $F$ is suitable and we have $3a(F)+k(F)-m\le 2$ or $a(F)=1$. Finally, if $k(F)<4$, then $3a(F)+k(F)-m\le 3a(F')+1-m\le 3a(F')+k(F')-m\le 2$ and we are done. 

    Now suppose that $F$ satisfies $3a(F) + k(F) - m \leq 2$ or $a(F)=1$, and let $F'$ be a forest-tuple obtained from $F$ by adding $m-e(F)$ forests, each consisting of a~single edge. 
    Then $F$ is a valid subforest of $F'$ so it is enough to show that $F'$ is suitable.
    Assume first that $3a(F) + k(F) - m \leq 2$ and $a(F)>1$. Then we have $a(F')=a(F)$, $k(F')=k(F)$ and $e(F')=m$, so $3a(F') + k(F') - e(F') = 3a(F) + k(F) - m \leq 2$, and hence $F'$ is suitable. On the other hand, if $a(F)=1$, then since $m\ge 4$, we have $t(F')\ge 4$, which implies that $F'$ is suitable.
\end{proof}

\begin{lemma}\label{lem:F-extended}
    Let $F$ be an $m$-suitable rooted forest-tuple. Let $F'$ be a rooted forest-tuple obtained from $F$ by adding $m-e(F)$ additional forests, each consisting of a single edge. Then $F'$ is suitable.
\end{lemma}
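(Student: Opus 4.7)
The plan is to analyze $F'$ directly against the two defining clauses of suitability, splitting on the value of $a(F)$. By definition of a rooted forest-tuple, $e(F_i)>0$ for some $i$, so $a(F)\ge 1$. Since the $m-e(F)$ forests appended to form $F'$ each have exactly one edge, I expect $a(F')=a(F)$, and moreover $k(F')=k(F)$ whenever $a(F)\ge 2$, while $e(F')=m$.

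First, if $a(F)\ge 2$, I would invoke Lemma~\ref{nsuitform} on the $m$-suitable tuple $F$: the alternative $a(F)=1$ is excluded, so $3a(F)+k(F)-m\le 2$. Substituting the identities $a(F')=a(F)$, $k(F')=k(F)$, $e(F')=m$ gives $3a(F')+k(F')-e(F')\le 2$, placing $F'$ in the first clause of the definition of suitable.

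Next, if $a(F)=1$, then every original forest in $F$ has exactly one edge and every appended forest has exactly one edge, so $a(F')=1$ and it suffices to verify $t(F')\ge 4$. Since $F$ is $m$-suitable, it is a valid subforest of some suitable forest-tuple $F''$ with $e(F'')=m$; Lemma~\ref{vier} applied to $F''$ yields $t(F'')\ge 4$, and since each edge-bearing component contributes at least one edge, $m=e(F'')\ge t(F'')\ge 4$. Because every component forest of $F'$ has exactly one edge, I get $t(F')=e(F')=m\ge 4$, so $F'$ satisfies the second clause of suitability.

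The only mildly non-routine point is the inequality $m\ge 4$ in the second case, and the plan is to extract this purely from Lemma~\ref{vier} applied to the ambient witness $F''$ guaranteed by the definition of $m$-suitability. No further obstacle is expected: the case split is exhaustive because $a(F)\ge 1$, and in each case the bookkeeping of $a$, $k$, $t$, $e$ under the appending operation is immediate.
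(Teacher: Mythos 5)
Your proof is correct and follows essentially the same two-case split as the paper's own argument: for $a(F)\ge 2$ you invoke Lemma~\ref{nsuitform} and track $a,k,e$ under the appending operation, and for $a(F)=1$ you apply Lemma~\ref{vier} to the ambient suitable witness to extract $t(F'')\ge 4$ and hence $t(F')=m\ge 4$. The only cosmetic slip is the phrase ``every original forest in $F$ has exactly one edge'' when $a(F)=1$ (some may have zero edges), but this does not affect the conclusions $a(F')=1$ and $t(F')=e(F')=m$.
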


\begin{proof}
Notice that $m\ge e(F)$ and $a(F)=a(F')$.

First consider the case $a(F)=1$. Let $F''$ be a suitable rooted forest-tuple with $e(F'')=m$, given by $m$-suitability of $F$. It follows from Lemma~\ref{vier} that $t(F'')\geq 4$. Since $a(F')=1$ and $e(F')=m$, we have $t(F') \geq t(F'')$, and hence we get that $F'$ is suitable as well.

Now suppose that $a(F)>1$. We have $k(F)=k(F')$. Using Lemma~\ref{nsuitform} for $F$, we conclude that $3a(F')+k(F')-m=3a(F)+k(F)-m\le 2$. Thus $F'$ is suitable.
\end{proof}

\begin{lemma}\label{msuitable}
Let $F$ be a rooted forest-tuple and $m\in{\mathbb N}$.    
If $a(F)<m/3$ and $m\ge e(F)$, then $F$ is $m$-suitable.
\end{lemma}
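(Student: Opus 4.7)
The plan is to exhibit an explicit suitable rooted forest-tuple $F'$ with $e(F')=m$ that contains $F$ as a valid subforest, and then invoke the definition of $m$-suitability. The natural candidate is to pad $F$ by appending $m-e(F)$ new vertex-disjoint single-edge rooted components; if $m=e(F)$ no padding is needed and I take $F'=F$.

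With this $F'$, I want to apply Lemma~\ref{suitable} directly. By construction $e(F')=m$, so the hypothesis $a(F)<m/3$ reads as $a(F)<e(F')/3$. The key (and essentially only) verification is that $a(F')=a(F)$: since $F$ is a rooted forest-tuple, at least one of its components has a positive number of edges, so $a(F)\ge 1$, and hence appending components with just one edge each cannot increase the maximum component-edge count. Therefore $a(F')=a(F)<e(F')/3$, and Lemma~\ref{suitable} yields that $F'$ is suitable.

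It remains to observe that $F$ is a valid subforest of $F'$. This is immediate from the construction: the original components of $F$ sit unchanged as the first $\ell(F)$ components of $F'$, with the same root sets, and the definition of valid subforest only requires $\ell(F)\le\ell(F')$ with $F_i\subseteq F'_i$ and roots preserved for $i\in[\ell(F)]$. Consequently $F$ is $m$-suitable by definition, completing the proof.

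The argument is essentially a one-line reduction to Lemma~\ref{suitable}, so there is no real obstacle; the only place where one could slip up is forgetting that rooted forest-tuples are required to have at least one edge, which is exactly what guarantees $a(F)\ge 1$ and hence that the single-edge padding does not alter $a(\cdot)$.
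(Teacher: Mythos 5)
Your proof is correct and matches the paper's argument essentially verbatim: pad $F$ with $m-e(F)$ single-edge rooted components to form $F'$, note $a(F')=a(F)<m/3=e(F')/3$, apply Lemma~\ref{suitable}, and conclude via valid-subforest containment. The only difference is that you spell out why $a(F')=a(F)$ (namely $a(F)\ge 1$), a detail the paper leaves implicit.
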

\begin{proof}
    Let $F'$ be a rooted forest-tuple obtained from $F$ by adding $m-e(F)$ forests, each consisting of a single edge. In particular, if $e(F)=m$ then $F=F'$. We have $a(F')=a(F) < m/3 = e(F')/3$. Hence, by Lemma~\ref{suitable}, $F'$ is suitable. Since $F$ is a valid subforest of $F'$, the assertion follows.
\end{proof}

\subsection{Shrinking operations}

While considering a game in which Waiter is trying to obtain a red copy of a forest $F$ (or a rooted forest-tuple) by increasing its partial embedding, one can look at the game as a process of decreasing the part of $F$ left for embedding. Below we define two corresponding kinds of shrinking operations on rooted forest-tuples.

\begin{definition}\label{def:shrink}
Let $H=(H_1,H_2,\ldots,H_{\ell})$ be a rooted forest-tuple with the set of roots $R$.
Assume that $e(H_1)\ge e(H_2)\ge \ldots \ge e(H_{\ell})$. 
Let $B$ be a complete graph on at least $v(H)$ vertices, with fixed distinct images $u(r)$ of roots $r\in R$. 
\begin{enumerate}
\item
Suppose that $t(H)\ge 2$ and $r_i\in V(H_i)$, for $i=1,2$, are any non-isolated roots.
We say that playing on $B$, Waiter performs an $H$-shrink operation of type 1 if he plays one round in the following way. 
Waiter offers the edges $u(r_1)w$ and $u(r_2)w$ for any free vertex $w\in V(B)$ which is not an image of a root of $H$. 
\item
Suppose that $t(H)\ge 3$ and $r_i\in V(H_i)$, for $i=1,2,3$, are any non-isolated roots.
We say that playing on $B$, Waiter performs an $H$-shrink operation of type 2 if he plays two rounds in the following way. 
First Waiter offers the edges $u(r_1)w_1$ and $u(r_1)w_2$ for any distinct free vertices $w_1,w_2\in V(B)$ which are not root images. Client colors $u(r_1)w_i$ blue and $u(r_1)w_j$ red. In the following round Waiter offers $u(r_2)w_i$ and $u(r_3)w_i$.
\end{enumerate}

During the above $H$-shrink operation of type 1, Client claims a red edge $e$ incident to the image $u(r)$ of a root $r\in  V(H_{i})$, for some $i\in\{1,2\}$. Let $e'$ be any edge of $H$ incident to $r$. We say that the rooted forest-tuple $H-e'$ arises from the $H$-shrink operation of type $1$. Analogously, we define a rooted forest-tuple arising from the $H$-shrink operation of type 2, namely, it is $(H-e'_1)-e'_2$, where $e'_1,e'_2$ are edges of $H$ corresponding to the two red edges claimed by Client. 
\end{definition}

For simplicity of the above definition, we assumed the forests of $H$ are sorted by their number of edges. However, we will also allow $H$-shrink operations in case of any rooted forest-tuple -- in an $H$-shrink operation of type 1 Waiter deals with any two biggest forests of the tuple, while in an $H$-shrink operation of type 2 she deals with any three biggest forests. 

In the following lemma we prove among others that $H$-shrink operations of both types preserve suitability, provided $H$ is not simple. 

\begin{lemma} \label{lem:shrinksuit}
Let $H=(H_1,H_2,\ldots,H_{\ell})$ be a rooted forest-tuple with the set of roots $R$ and $s\in\{1,2\}$.
Suppose that $\min(k(H),2)=3-s$ and $t(H)>s$. 
Let $B$ be a complete graph on at least $v(H)$ vertices, with fixed distinct images $u(r)$ of roots $r\in R$. 
Assume that playing on $B$, Waiter performs an $H$-shrink operation of type $s$ and let $H'$ be the rooted forest-tuple arising from this operation. Then the following holds.
	\begin{enumerate}[(a)]
    \item\label{shrinksuit} If $H$ is suitable and not simple, then $H'$ is suitable.
    \item\label{shrinksuitm} If $H$ is $m$-suitable and $e(H)-t(H)\ge 2$, then $H'$ is $(m-s)$-suitable.
	\end{enumerate}	 
\end{lemma}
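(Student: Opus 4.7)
The plan is to track how the parameters $a(H), k(H), e(H)$ change under a shrink operation of type $s$ and verify the suitability inequality $3a + k - e \leq 2$ (for (a)) or the analogous $3a + k - m \leq 2$ (for (b), via Lemma~\ref{nsuitform}). Before analyzing the shrinks, I would first reduce the hypotheses. In part (a), $H$ suitable and not simple forces $a(H) \geq 2$: if $a = 1$, then $e = t$ and $3a + k - e = 3$, so suitability requires $t \geq 4$, which combined with $a = 1$ is exactly the definition of simple. In the type-2 subcase ($s = 2$, $k = 1$) I would moreover rule out $a = 2$, since suitability would then force $e \geq 5$ and hence $t \geq 4$, making $H$ simple. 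So for $s = 2$ we have $a(H) \geq 3$. For part (b), the hypothesis $e(H) - t(H) \geq 2$ forces $a(H) \geq 2$ in general and $a(H) \geq 3$ in the $k = 1$ case, and Lemma~\ref{nsuitform} then lets me rewrite $m$-suitability as $3a + k - m \leq 2$.

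The type-1 analysis ($s = 1$, $k \geq 2$) is direct: the operation removes one edge from either $H_1$ or $H_2$ (both of size $a$), giving $a(H') = a(H)$, $k(H') = k(H) - 1$, $e(H') = e(H) - 1$, and the identity
\[
3a(H') + k(H') - e(H') = 3a(H) + k(H) - e(H)
\]
immediately yields part~(a). For part~(b) the analogous identity with $m$ in place of $e$ gives the needed inequality, and the backward direction of Lemma~\ref{nsuitform} applies after checking $m - 1 \geq e(H')$ and $m - 1 \geq 4$, the latter following from $m \geq 3a + k - 2 \geq 6$.

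For the type-2 shrink ($s = 2$, $k = 1$, $a \geq 3$), two edges are removed: one from $H_1$ and one from $H_j$ with $j \in \{2, 3\}$ chosen by Client. Thus $a(H') = a(H) - 1 \geq 2$ and $e(H') = e(H) - 2$. Letting $p$ denote the number of forests of $H$ with exactly $a - 1$ edges, I would observe that if $p \geq 2$ then both $H_2, H_3$ have $a - 1$ edges, so whichever is hit drops to $a - 2$ and $k(H') = p$; while if $p \leq 1$ then $k(H') \leq 2$. The desired inequality $3a(H') + k(H') - e(H') \leq 2$ rearranges to $k(H') \leq e(H) - 3a(H) + 3$. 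For $p \leq 2$ the right-hand side is $\geq 2$ by the bound $e(H) \geq 3a(H) - 1$ coming from suitability (or, for part~(b), from $m$-suitability together with $m \geq e(H)$). For $p \geq 3$, I would use $e(H) \geq a(H) + p(a(H) - 1)$ (summing the $p + 1$ largest forests) together with the algebraic check $p(a - 2) \geq 2a - 3$, valid for $a \geq 3, p \geq 3$, to conclude $e(H) \geq 3a(H) + p - 3$, giving $k(H') = p \leq e(H) - 3a(H) + 3$. Part (b) follows with the same arithmetic applied to $m \geq e(H) \geq a(H) + p(a(H) - 1)$, and $m \geq 3a - 1 \geq 8$ ensures $m - 2 \geq 4$.

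I expect the main obstacle to be the type-2 subcase with $p \geq 3$: here $k(H')$ can jump from $1$ all the way to $p$, and it is not obvious that this is absorbed by the suitability inequality. The resolution is the elementary lower bound $e(H) \geq a(H) + p(a(H) - 1)$, arising simply from summing the $p + 1$ largest forest sizes, which for $a \geq 3$ is strong enough to accommodate the jump; the bounding inequality $p(a - 2) \geq 2a - 3$ is essentially tight at $a = p = 3$.
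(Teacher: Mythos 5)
Your proposal is correct and follows essentially the same route as the paper: reduce to the inequality $3a+k-m\le 2$ via Lemma~\ref{nsuitform}, track $(a,k,e)$ under a type-$1$ or type-$2$ shrink, and in the type-$2$ subcase with $k(H')\ge 3$ use the size bound $e(H)\ge a(H)+k(H')\bigl(a(H)-1\bigr)$; your inequality $p(a-2)\ge 2a-3$ is just the unfactored form of the paper's $(a-2)(k(H')-2)\ge 1$. The only departure is stylistic: the paper proves part~(b) and then obtains part~(a) by specializing to $m=e(H)$ (after noting that ``suitable and not simple'' implies $e(H)-t(H)\ge 2$), whereas you carry both cases in parallel, which is fine but slightly redundant.
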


\begin{proof}
Since the order of forests in $F$ is irrelevant for the above properties, we can assume that $e(H_1)\ge e(H_2)\ge e(H_3)\ge \ldots \ge e(H_\ell)$. 
Suppose that $H$ is $m$-suitable and $e(H)-t(H)\ge 2$. Then $a(H)\ge 2$ and hence $3a(H)+k(H)-m\le 2$ in view of Lemma~\ref{nsuitform}. Another consequence of the inequality  $e(H)-t(H)\ge 2$ is that $3a(H)+k(H)\ge 8$, so $m\ge 6$. Thus, we also have $m-s\ge 4$. We know that $e(H')=e(H)-s$ and $m\ge e(H)$, hence $m-s\ge \max\{4,e(H')\}$. 
Therefore, in order to prove that $H'$ is $(m-s)$-suitable, it is enough to verify that $3a(H')+k(H')-(m-s)\le 2$ and use Lemma~\ref{nsuitform}. We consider two cases depending on $s$. 

\paragraph{Case 1} Suppose that $s=1$. 
Then $k(H)\ge 2$ by assumption that $\min(k(H),2)=3-s=2$, and hence $e(H_1)=e(H_2)=a(H)$. Observe that the $H$-shrink operation of type 1 decreases either $e(H_1)$ or $e(H_2)$ by 1, so $a(H')=a(H)$ and $k(H')=k(H)-1$. Thus
$$3a(H')+k(H')-(m-s)=3a(H)+k(H)-m\le 2.$$

\paragraph{Case 2} Suppose that $s=2$. 
Then $k(H)=1$ by assumption that $\min(k(H),2)=3-s=1$. The $H$-shrink operation of type 2 decreases $e(H_1)$ and exactly one of $e(H_2),e(H_3)$ by 1.  
We note that $a(H')=a(H)-1$ and consider two subcases, depending on the structure of $H'$. 
\begin{itemize}
\item 
If $k(H')\le 2$, then 
        $$3a(H')+k(H')-(m-s)\le 3(a(H)-1)+2-(m-2)=3a(H)+k(H)-m\le 2.$$
\item 
If $k(H')\ge 3$, then $e(H_2)=e(H_3)=\ldots=e(H_{k(H')+1})=a(H)-1$, so $a(H)+(a(H)-1)k(H')\le e(H)\le m$. This implies that
         \begin{align*}
        3a(H')+k(H')-(m-s)-2 & = 3(a(H)-1) + k(H') - m\\
        & \le 3a(H) - 3 + k(H') - a(H) - (a(H)-1)k(H') \\
        & = 1 - (a(H)-2)(k(H')-2).
    \end{align*}
    Since $e(H)-t(H)\ge 2$ and $k(H)=1$, we have $a(H)\ge 3$, so $1 - (a(H)-2)(k(H')-2)\le 0$. 
\end{itemize}

In both subcases $3a(H')+k(H')-(m-s)\le 2$ and the assertion \ref{shrinksuitm} follows. 

Property \ref{shrinksuit} is a consequence of \ref{shrinksuitm} applied with $m=e(H)$, since if $H$ is suitable and not simple, then $e(H)-t(H)\ge 2$.
\end{proof}

\begin{remark}
    If $F$ is suitable, then $t(F)\ge 4>2\ge s$. Therefore, in this case we do not need the assumption $t(F)>s$, required for performing a shrink operation of type $s$.
\end{remark}

Shrinking operations will be the main ingredient of Waiter's strategy in most games considered further in this chapter, but it would be more convenient to look at them in the reversed way, i.e.\ increasing the red graph. Below we define an operation of type $s$ corresponding to a shrink operation of type $s$. 

Let $F=(F_1,F_2,\ldots,F_{\ell})$ be a rooted forest-tuple with the set of roots $R$.
Consider a game played on a board $B$ which is the complete graph, and such that Waiter's aim is to force a red copy of $F$. Suppose that after a number of rounds Client's graph is a red copy $\bar F'=\bar F'_1 \dcup \bar F'_2 \dcup \ldots \dcup \bar F'_{\ell}$ of a valid subforest $F'=(F'_1, F'_2, \ldots, F'_{\ell})$ of $F$.
Let $R'$ be the set of roots of $F-F'$, and for every  $r\in R'$ let $u(r)$ be the image of $r$ in $\bar F'$.
We say that Waiter performs an \emph{operation of type $s\in\{1,2\}$}, if she performs an $(F-F')$-shrink operation of type $s$, according to Definition~\ref{def:shrink} applied with fixed vertices $u(r)$, $r\in R'$. During the operation of type 2, Client claims red edges $e_1,e_2$ incident to roots $r_1\in V(\bar F'_{i})$, $r_2\in V(\bar F'_{j})$, $i\neq j$, respectively. 
We define $\bar F''_{i}=\bar F'_{i}\cup\{e_1\}$,  $\bar F''_{j}=\bar F'_{j}\cup\{e_2\}$
and  $\bar F''_{k}=\bar F'_{k}$ for $k\notin\{i,j\}$, and we call $\bar F''=\bar F''_1 \dcup \bar F''_2 \dcup \ldots \dcup \bar F''_{\ell}$ the red graph arising from the operation of type 2. We define the red graph arising from the operation of type 1, in which $\bar F'$ increases by one edge, in an analogous way. 
Note that if $\bar F''$ arises from the operation of type $s$, then it is a copy of a valid subforest $F''$ of $F$ such that $F-F''$ arises from the $(F-F')$-shrink operation of type $s$.  

Observe that Waiter is able to perform operation of type $s\in\{1,2\}$ if Client's graph $\bar F'$ satisfies $t(F-F')> s$ and there are at least $s$ free vertices on the board which are not the roots of $\bar F'$. Furthermore, if Waiter is going to increase a red forest $\bar F_i$ by the operation of type 1, then she has freedom in deciding which edge of $F_i$, not yet embedded, will correspond to the next red edge $e$ in a copy of $F_i$, provided an endpoint of $e$ is mapped into a vertex of $\bar F'_i$. Similarly, while increasing $\bar F_i$ or $\bar F_j$ by the operation of type 2, Waiter is flexible in selecting (simultaneously) edges $e_i\in E(F_i), e_j\in E(F_j)$ such that one of them  will correspond to the next red edge in a copy of $F_i$ or $F_j$, as long as $e_i,e_j$ have exactly one endpoint mapped into a vertex of $\bar F_i,\bar F_j$, respectively. We summarize this observation in the following remark used later in the proofs.

\begin{remark}\label{rem:order}
Consider a game in which Waiter's aim is to obtain a red copy of a rooted forest-tuple $F=(F_1,F_2,\ldots,F_{\ell})$. Suppose she can force edges of $F_i$ in a series of (not necessarily subsequent) operations of type 1 or 2. Then the edges of $F_i$ can be forced by Waiter in any order, provided that 
at every stage of the game the part of $F$ already embedded is a valid subforest of $F$. Such an ordering of embedding edges can be fixed for every $F_i$ at the beginning of the game.  
\end{remark}

The result below is a straightforward consequence of the definitions of a valid subforest, the operation of type $s$ and Lemma~\ref{lem:shrinksuit}, so we omit its proof. 

\begin{lemma} \label{lem:add12}
Let $F=(F_1,F_2,\ldots,F_{\ell})$ be a rooted forest-tuple with its root set $R$ and
let $F'=(F'_1, F'_2, \ldots, F'_{\ell})$ be a valid subforest of $F$. 
Consider a moment of the game played on the complete graph $B$ such that Client's graph 
is a red copy $\bar{F'}=\bar{F}'_1 \dcup \bar F'_2 \dcup \ldots \dcup \bar F'_{\ell}$ of $F'$.
If Waiter is able to perform the operation of type $s\in\{1,2\}$, then the following holds for the red graph $\bar F''$ arising from this operation.
\begin{enumerate}[(a)]
\item
$\bar F''$ is a copy of a valid subforest $F''=(F''_1, F''_2, \ldots, F''_{\ell})$.
\item
No edge claimed during the operation of type $s$ has both endpoints in the roots of $F$.
\item
Every blue edge claimed during the operation of type $s$ lies between sets 
$V(\bar F''_{i})$ and $V(\bar F''_j)$ for some $i\neq j$.
\item
Every red edge  claimed during the operation of type $s$ is an edge of $\bar F''_i$, for some $i\in[\ell]$.  
\item\label{addsuit}
If $F-F'$ is suitable but not simple and $\min(k(F-F'),2)=3-s$, then $F-F''$ is suitable.
\item\label{addsuitm}
If $F-F'$ is $m$-suitable, $e(F)-t(F)\ge 2$, and $\min(k(F-F'),2)=3-s$, then $F-F''$ is $(m-s)$-suitable.
\end{enumerate}
\end{lemma}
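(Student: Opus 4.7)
The plan is to verify each of the six claims by directly unpacking Definition~\ref{def:shrink} and the definition of the operation of type $s$, and then for the two suitability statements invoking Lemma~\ref{lem:shrinksuit} with $H := F-F'$ and $H' := F-F''$. I would handle (a)--(d) by a direct read of the moves in the operation, and (e)--(f) by noting that $H'$ is, by construction, exactly the rooted forest-tuple arising from an $H$-shrink operation of type $s$, so the two shrink-time hypotheses transfer verbatim.

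For (a), the operation of type $s$ causes Client to claim $s$ red edges, each of the form $u(r)w$ with $r$ a root of $F-F'$ (so $r\in V(F'_i)$ for some $i$) and $w$ a free vertex of $B$. Under the implicit embedding, each such red edge corresponds to an edge of $F_i\setminus E(F'_i)$ incident to $r$, so adjoining it keeps the image a copy of a subforest $F''_i$ with $F'_i\subseteq F''_i\subseteq F_i$; the roots of $F''_i$ equal those of $F'_i$, which are roots of $F_i$, so $F''$ is valid. For (b), every offered edge in both types has a $w$-endpoint (namely $w$, $w_1$, $w_2$ or $w_i$), which by definition is free and therefore not an image of any root of $F$. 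For (c), in type $1$ the unique blue edge is $u(r_{j})w$ where $u(r_{3-j})w$ went red, so after the move $w\in V(\bar F''_{3-j})$ while $u(r_j)\in V(\bar F''_j)$, giving the claimed bichromatic split; in type $2$, a two-round case analysis (tracking which of $w_1,w_2$ became $w_i$) shows each blue edge joins two different embedded components. For (d), the red edges are exactly those added to the specific $\bar F''_i$'s during the operation, again by construction.

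For (e) and (f), the operation of type $s$ performed on the board is, ignoring the embedding, precisely an $H$-shrink of type $s$ in the sense of Definition~\ref{def:shrink}; hence $H' = F-F''$ is the rooted forest-tuple arising from that shrink. In (e), $H$ is suitable and not simple, so Lemma~\ref{vier} gives $t(H)\ge 4>s$, and the assumption $\min(k(H),2)=3-s$ is shared between the two lemmas, so Lemma~\ref{lem:shrinksuit}\ref{shrinksuit} applies and delivers suitability of $H'$. In (f), one uses Lemma~\ref{lem:shrinksuit}\ref{shrinksuitm} in the same way: $m$-suitability of $H$ is given, $\min(k(H),2)=3-s$ is given, and the required inequality $e(H)-t(H)\ge 2$ is inherited from $e(F)-t(F)\ge 2$ together with the structure of valid subforests (any forest $F_i^\ast$ in $F-F'$ with $e(F_i^\ast)>0$ corresponds to a nonempty $F_i$, so $t(H)\le t(F)$; combined with $e(H)=e(F)-e(F')$ and the $m$-suitability controlling the edge distribution, one obtains $e(H)-t(H)\ge 2$).

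The main (really only) obstacle is the bookkeeping in (f): ensuring that the hypothesis $e(H)-t(H)\ge 2$ of Lemma~\ref{lem:shrinksuit}\ref{shrinksuitm} survives the replacement of $F$ by $F-F'$. Everything else is a mechanical trace through the definitions, which is why the authors declare the lemma a straightforward consequence and omit the proof.
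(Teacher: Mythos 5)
Your treatment of (a)--(e) is fine and matches the intended reading: a direct unpacking of Definition~\ref{def:shrink} and the notion of an operation of type $s$, followed by an application of Lemma~\ref{lem:shrinksuit}\,(a), using Lemma~\ref{vier} to see that suitability of $F-F'$ gives $t(F-F')\ge 4>s$. The problem is your argument for (f). Lemma~\ref{lem:shrinksuit}\,(b) requires $e(H)-t(H)\ge 2$ with $H:=F-F'$, i.e.\ $e(F-F')-t(F-F')\ge 2$. You claim to derive this from the lemma's stated hypothesis $e(F)-t(F)\ge 2$ via $t(H)\le t(F)$ and $e(H)=e(F)-e(F')$, but these facts only give $e(H)-t(H)\ge \bigl(e(F)-t(F)\bigr)-e(F')$, which is $\ge 2$ only when $e(F')=0$. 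The appeal to ``$m$-suitability controlling the edge distribution'' cannot close the gap: $m$-suitability of $F-F'$ is perfectly compatible with $a(F-F')=1$, hence $e(F-F')-t(F-F')=0$, even when $e(F)-t(F)$ is arbitrarily large. For a concrete failure, take $F$ to consist of four rooted paths with ten edges each and let $F'$ take a nine-edge initial subpath from each root, so that $F-F'$ is a tuple of four single-edge rooted forests; take $m=4$. Then $e(F)-t(F)=36$, the tuple $F-F'$ is $4$-suitable, $k(F-F')=4$ forces $s=1$, and $t(F-F')=4>s$, yet after one shrink the tuple $F-F''$ consists of three single edges and is not $3$-suitable (no suitable supertuple with three edges contains it). So the implication you assert simply does not hold under the stated hypothesis.

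What this actually reveals is that the written hypothesis ``$e(F)-t(F)\ge 2$'' in part (f) is a typo for ``$e(F-F')-t(F-F')\ge 2$.'' That is precisely the condition the paper verifies at the unique place Lemma~\ref{lem:add12}\,(f) is invoked, inside the proof of Lemma~\ref{lem:stage1m} (the text checks $e(F-F^j)-t(F-F^j)\ge 2$), and it is exactly what Lemma~\ref{lem:shrinksuit}\,(b) demands of $H=F-F'$. Under that corrected reading, (f) is an immediate application of Lemma~\ref{lem:shrinksuit}\,(b) with no derivation needed at all; the bookkeeping you flag as ``the main obstacle'' disappears. So the right fix is not a cleverer argument but recognizing the mismatch between the lemma's notation and its intended (and used) hypothesis.
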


\subsection{Forcing rooted forest-tuples}

We are ready to prove a couple of results regarding forcing rooted forests.

\begin{lemma} \label{lem:stage1}
Let $F=(F_1,F_2,\ldots,F_{\ell})$ be a suitable rooted forest-tuple with the set of roots $R$.
Let $B$ be a complete graph on $v(F)$ vertices, with fixed distinct vertices $u_1,u_2,\ldots, u_{|R|}$. 
Then, playing on $B$, Waiter can force a red copy $\bar{F'}=\bar{F}'_1 \dcup \bar F'_2 \dcup \ldots \dcup \bar F'_{\ell}$ of a valid subforest $F'=(F'_1, F'_2, \ldots, F'_{\ell})$ of $F$ such that $F-F'$ is suitable and simple.  Furthermore, she can achieve this goal within $e(F')$ rounds applying operations of type 1 or 2 and 
such that after the $j^{\text{th}}$ operation Client's graph is a red copy $\bar F^j=\bar F^j_1 \dcup \bar F^j_2 \dcup \ldots \dcup \bar F^j_{\ell}$ of a valid subforest $F^j=(F^j_1, F^j_2, \ldots, F^j_{\ell})$ of $F$
satisfying the following properties.
	\begin{enumerate}[(a)]
	\item\label{stage1roots} 
     For every $r\in R$, the image of $r$ in $\bar F^j$ is $u_r$.
    \item No edges between the images of the roots are colored.
    \item Every blue edge lies between the sets $V(\bar F^j_{i})$ and $V(\bar F^j_{i'})$, for some $i\neq i'$.
    \item Every red edge is an edge of $\bar F^j$. 
    \item\label{stage1suit} 
     $F-F^j$ is suitable.
	\end{enumerate}	 
\end{lemma}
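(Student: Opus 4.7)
The plan is to build $\bar F'$ iteratively by applying shrink operations until the remaining forest-tuple $F - F^j$ becomes simple. I would start with $F^0$ being the trivial valid subforest of $F$ consisting of just the original roots (no edges), and $\bar F^0$ sending each root $r$ to its assigned vertex $u_r$. At this stage, properties (a)-(d) hold vacuously (no colored edges yet), and (e) holds because $F - F^0 = F$ is suitable by assumption. If $F$ itself is already simple, we simply output $F' := F^0$.

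Inductively, once a red copy $\bar F^j$ of a valid subforest $F^j$ satisfying (a)-(e) has been produced, if $F - F^j$ is simple we stop and set $F' := F^j$. Otherwise $F - F^j$ is suitable but not simple, and Waiter plays one operation of type $s$ chosen by the rule
\[
s \;=\; \begin{cases} 1 & \text{if } k(F - F^j) \geq 2, \\ 2 & \text{if } k(F - F^j) = 1. \end{cases}
\]
This ensures $\min(k(F - F^j), 2) = 3 - s$, which is the hypothesis of Lemma~\ref{lem:add12}\eqref{addsuit} needed to preserve suitability.

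Feasibility is easy: Lemma~\ref{vier} gives $t(F - F^j) \geq 4 > s$, so there are enough nonempty forests for the shrink, and the number of free vertices on $B$ at this stage is
\[
v(B) - v(F^j) \;=\; v(F) - \bigl(|R| + e(F^j)\bigr) \;=\; e(F - F^j) \;\geq\; t(F - F^j) \;\geq\; 4,
\]
which is more than enough for the one or two vertices the operation consumes. After the operation, properties (a)-(d) are inherited from Lemma~\ref{lem:add12}(a)-(d) (together with the fact that shrink operations never modify images already fixed, which takes care of (a) in particular), and $F - F^{j+1}$ is again suitable by Lemma~\ref{lem:add12}\eqref{addsuit}. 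Termination is immediate since $e(F - F^j)$ strictly decreases by at least $1$ per operation, and the stopping rule guarantees the procedure halts only when $F - F^j$ is simple and suitable. A final round count: each operation of type $s$ uses $s$ rounds and reduces $e(F - F^j)$ by exactly $s$, so the total is $e(F) - e(F - F^N) = e(F^N) = e(F')$, matching the claim.

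\textbf{Main obstacle.} The critical point is to pair the operation type with the current value of $k(F - F^j)$ via the rule $\min(k, 2) = 3 - s$, since this is precisely the hypothesis of Lemma~\ref{lem:add12}\eqref{addsuit} that prevents the iteration from leaving the class of suitable forest-tuples. Once this matching rule is fixed, everything else — feasibility of each shrink, preservation of (a)-(d), termination, and the round count — follows routinely from the tools already set up in this section.
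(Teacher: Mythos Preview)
Your proof is correct and follows essentially the same approach as the paper's own argument: start from the trivial subforest $F^0$ of roots, repeatedly apply an operation of type $1$ or $2$ chosen via the rule $s=1$ iff $k(F-F^j)\ge 2$, invoke Lemma~\ref{lem:add12} to propagate properties (a)--(e), and stop once $F-F^j$ becomes simple. Your free-vertex count and explicit round accounting are slightly more detailed than the paper's, but the structure and key ideas are identical.
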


\begin{proof}
Note that there are exactly $|R|$ trees which are the components of $F$. The general strategy for Waiter is to force Client to grow each tree starting from the root, until the obtained rooted forest-tuple is simple. Our argument is inductive.

At the beginning of the game, let $F_i^0$ consist of the roots in $R\cap V(F_i)$, 
and $\bar{F}_i^0$ consist of their fixed images $u_r$, $r\in R\cap V(F_i)$.
Then  
$\bar{F}^0 := (\bar{F}_1^0,\bar{F}_2^0,\ldots,\bar{F}_{\ell}^0)$
and $F^0 := (F_1^0,F_2^0,\ldots,F_{\ell}^0)$
satisfy \ref{stage1roots} -- \ref{stage1suit}. 

For $j\ge 0$, assume that $j$ operations of type 1 or 2 
were already played and that Client's graph
$\bar F_1^j \dcup \bar F_2^j \dcup \ldots \dcup \bar F_{\ell}^j$
is a copy of a rooted forest $F_1^j \dcup F_2^j \dcup \ldots \dcup F_{\ell}^j$
as described above. Assume further that
$F-F^j$ is not simple and hence Waiter is going to play
the $(j+1)^{\text{st}}$ operation of type 1 or 2. Note that $F-F^j$ is suitable, so $t(F-F^j)\ge 4$ and there are at least 4 free vertices among non-roots images on the board, so 
Waiter can perform an operation of any type. 
The type of the operation depends on $k(F-F^j)$.  
If $k(F-F^j)\ge 2$, then she performs the operation of type 1; otherwise 
she performs the operation of type 2. Let $\bar F^{j+1}$ be the red graph arising from this operation.
In view of Lemma~\ref{lem:add12} applied to $F$ and $F^j$, the graph $\bar F^{j+1}$ is a copy of
a valid subforest of $F$, let us denote it by $F^{j+1}$. Conditions \ref{stage1roots} -- \ref{stage1suit} are satisfied by $\bar F^{j+1}$ and $F^{j+1}$ in view of the inductive hypothesis and Lemma~\ref{lem:add12}.
In particular, the suitability of $F^{j+1}$ follows from Lemma~\ref{lem:add12}~\ref{addsuit}.

Observe that the number of edges of $F-F^j$ is decreasing with $j$, while $t(F-F^j)\ge 4$ by Lemma~\ref{vier}, so $F-F^{s}$ is simple for some $s\ge 0$. It is also suitable because of \ref{stage1suit}. Finally, note that every red edge claimed in the game becomes an edge of $\bar F^s$, so it takes Waiter $e(F^s)$ rounds in total to achieve the desired graph.  
\end{proof}

\begin{lemma} \label{lem:starind}
Let $F=(F_1,F_2,\ldots,F_{\ell})$ be a suitable rooted forest-tuple.
Then playing on a complete graph $B$ with $v(F)$ vertices, Waiter can force a copy $\bar F=\bar F_1 \dcup \bar F_2 \dcup \ldots \dcup \bar F_{\ell}$ of $F_1 \dcup F_2 \dcup \ldots \dcup F_{\ell}$ 
within $e(F)+1$ rounds and such that the following properties hold.
	\begin{enumerate}[(a)]
	\item\label{suitroots} 
    All roots are mapped into vertices fixed at the beginning.
    \item No edges between the images of the roots are colored.
    \item\label{suitblue}  
    For every $i\in[\ell]$ there are no blue edges spanned by vertices in $V({\bar F_i})$.
    \item\label{suitred} 
    There are no red edges spanned by vertices in $V({\bar F_i})$ other than those of $\bar F_i$.
	\end{enumerate}	 
\end{lemma}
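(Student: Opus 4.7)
The plan is to split Waiter's play into two phases. In Phase~1, Waiter applies Lemma~\ref{lem:stage1} to force a red copy $\bar F'$ of a valid subforest $F'$ of $F$ such that $F-F'$ is suitable and simple; this costs $e(F')$ rounds, and the conditions (a)--(d) are already satisfied by $\bar F'$. Let $t = t(F-F') \geq 4$, which is guaranteed by simplicity. In Phase~2, Waiter extends $\bar F'$ to a copy $\bar F$ of $F$ in $e(F-F')+1$ more rounds, giving $e(F)+1$ rounds in total. Her main tool in Phase~2 is Lemma~\ref{lem:perfect_matching}, which forces a red perfect matching in $K_{t,t}^-$ within $t+1$ rounds.

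Let $W$ be the set of free (unembedded) vertices on the board, so that $|W| = e(F-F')$. If $a(F-F') = 1$, then each of the $t$ components of $F-F'$ is a single edge at its root. Let $r_1, \dots, r_t$ be the open roots with images $u_{r_1}, \dots, u_{r_t}$. Waiter plays only on the bipartite graph with sides $\{u_{r_1}, \dots, u_{r_t}\}$ and $W$, which is $K_{t,t}$, and applies Lemma~\ref{lem:perfect_matching} (with any one edge declared missing) to force a red perfect matching in $t+1 = e(F-F')+1$ rounds. This yields exactly the edges needed to complete each $\bar F_i$.

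If instead $a(F-F') = 2$ and $k(F-F') = 1$, let $F_1'$ be the unique two-edge component, rooted at $r_1$. In her first round of Phase~2, Waiter offers $u_{r_1}w_1$ and $u_{r_1}w_2$ for two free vertices $w_1, w_2 \in W$; call $w^*$ the vertex joined by the resulting red edge to $u_{r_1}$ and $w^\dagger$ the other. If $F_1'$ is a cherry, $w^*$ becomes the image of one leaf, and Waiter applies Lemma~\ref{lem:perfect_matching} to the bipartite $K_{t,t}^-$ between $\{u_{r_1}, u_{r_2}, \dots, u_{r_t}\}$ and $W \setminus \{w^*\}$, treating the already-blue edge $u_{r_1}w^\dagger$ as the missing edge. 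If instead $F_1'$ is a path $r_1$--$y_1$--$y_2$, then $w^*$ becomes the image of $y_1$, and Waiter applies Lemma~\ref{lem:perfect_matching} to the bipartite $K_{t,t}^-$ between $\{w^*, u_{r_2}, \dots, u_{r_t}\}$ and $W \setminus \{w^*\}$, declaring $w^*w^\dagger$ to be the missing edge. In either subcase, Phase~2 uses $1 + (t+1) = t+2 = e(F-F')+1$ rounds.

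The main obstacle is verifying (c) and (d). Every edge offered in Phase~2 lies in the relevant bipartite graph, so has one endpoint on the slot side and the other on the free-vertex side. When the perfect matching is complete, each free vertex $w$ joins the component $V(\bar F_{i(x)})$ of the slot $x$ it is matched to, so a colored edge $x'w$ with $x' \neq x$ crosses between distinct components and cannot lie inside any single $V(\bar F_j)$; the red matching edges themselves are precisely the missing edges of the $\bar F_j$'s. The delicate point is the path subcase of Phase~2: had $w^*w^\dagger$ not been declared missing, Client could force $w^\dagger$ to be matched to $w^*$, putting $w^\dagger$ in $V(\bar F_1)$ and making the round-1 blue edge $u_{r_1}w^\dagger$ lie entirely inside $V(\bar F_1)$, violating~(c); blocking that edge removes the danger. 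The single extra red edge produced by Lemma~\ref{lem:perfect_matching} is also bipartite and not a matching edge, so the same crossing argument shows it does not violate~(d). Finally, (a) and (b) are inherited from Phase~1, since Waiter never moves a root image and never offers an edge between two root images in Phase~2.
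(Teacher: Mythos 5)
Your proposal follows the paper's approach almost exactly: Phase~1 is Lemma~\ref{lem:stage1}, Phase~2 is a single preparatory round on the 2-edge forest followed by an application of Lemma~\ref{lem:perfect_matching}, and your discussion of why one particular edge of the bipartite graph must be treated as missing is the same delicate point the paper isolates. However, your case analysis in Phase~2 is incomplete, and this is a genuine gap rather than a cosmetic one.

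When $a(F-F')=2$ and $k(F-F')=1$, you write ``let $F_1'$ be the unique two-edge component, rooted at $r_1$'' and then split into the subcases ``$F_1'$ is a cherry'' and ``$F_1'$ is a path $r_1$--$y_1$--$y_2$.'' This presumes that the unique 2-edge member $F^{\ast}_1=F_1-F'_1$ of the leftover tuple is a \emph{single} tree attached to a single root. But $F^{\ast}_1$ is a forest whose root set is all of $V(F'_1)$, and its two edges can lie in two separate non-trivial components, each attached to a different vertex of $V(F'_1)$. (For instance, take $F_1$ to be the rooted tree with edges $\rho a,\,ab,\,ac,\,cd$ and suppose $F'_1$ is the path $\rho-a-c$; then $F^{\ast}_1=\{ab,cd\}$ is two disjoint edges with roots $a$ and $c$.) Nothing in Lemma~\ref{lem:stage1} prevents this. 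In that subcase the first round of Phase~2 is still played at the image $u_{r_1}$ of one root, say producing red $u_{r_1}w^{*}$ and blue $u_{r_1}w^{\dagger}$, but the remaining ``slot'' for $\bar F_1$ is the image $u_{r'_1}$ of the \emph{other} root. The danger edge is therefore $u_{r'_1}w^{\dagger}$ (if $w^{\dagger}$ were matched to $u_{r'_1}$, the round-1 blue edge $u_{r_1}w^{\dagger}$ would lie inside $V(\bar F_1)$), and this is the edge that must be declared missing --- which neither of your two designated missing edges covers. The paper's proof sidesteps this by defining $w_1$ uniformly as ``the unique vertex at which still exactly one red edge needs to be forced'' after round 1, which simultaneously handles the cherry, path, and disjoint-edges subcases, and then declaring $w_1v_{\tilde\ell}$ (i.e.~$w_1 w^{\dagger}$) missing. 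Once you replace your two ad hoc subcases by this single definition of the $\bar F_1$-slot, the rest of your argument goes through.
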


\begin{proof}
Let $R$ be the set of all roots of
$F_1 \dcup F_2 \dcup \ldots \dcup F_{\ell}$, and for each $r\in R$
fix a~distinct vertex $u_r$ of $B$. We present Waiter's strategy in two stages. In the first stage she forces Client to build a copy of a valid subforest $F'$ of $F$ such that $F-F'$ is simple. In the second stage Waiter forces a copy of $F-F'$, which completes building a copy of $F$. 

\paragraph*{\textbf{Stage I}}

Waiter applies her strategy provided by Lemma~\ref{lem:stage1} and forces Client to build a red copy 
$\bar F'=(\bar F'_1, \bar F'_2, \ldots, \bar F'_{\ell})$ of a 
valid subforest $F'=(F'_1, F'_2, \ldots, F'_{\ell})$ of $F$ such that $F-F'$ is suitable and simple, and such that every root $u_r$ of $\bar F'$ is the image of $r$, for $r\in R$. It follows from Lemma~\ref{lem:stage1} that no edges between the roots  $u_r$ are colored, all red edges are edges of $\bar F'$ and every blue edge lies between the sets $V(\bar F'_{i})$ and $V(\bar F'_{j})$, for some $i\neq j$. The first stage lasts $e(F')$ rounds. 

\paragraph*{\textbf{Stage II}}

Let $F^\ast = (F^\ast_1,\ldots,F^\ast_{\ell}) := F-F'$.
We set $a_i = e(F^\ast_i)=e(F_i) - e(F'_i)$ for every $i\in [\ell]$.
By reordering forests $F'_1,\ldots,F'_{\ell}$,
we can assume that $a_1 \leq 2$ and 
$a_2=\ldots = a_{\tilde{\ell}} = 1$ with $\tilde{\ell} := t(F - F') \geq 4$,
since $F-F'$ is simple and suitable. 

First assume that $a_1=2$. 
Let $V(B)\setminus V(\bar F')=:\{v_1,v_2,\ldots,v_{\tilde{\ell}+1}\}$.
Notice that all vertices of this set are free because of the properties of $\bar F'$ mentioned at the end of Stage I.

Waiter now plays as follows.
For the first round of Stage II, let $w$ be the image of a root of $F^\ast_1$. 
Waiter offers $wv_{\tilde{\ell}}$ and 
$wv_{\tilde{\ell}+1}$. 
By symmetry, we can assume that
Client colors $wv_{\tilde{\ell}+1}$ red.

Afterwards, 
for every $i\in [\tilde{\ell}]$, let $w_i\in V(\bar F'_i)$
be the unique vertex at which still exactly one red edge needs to be forced.
Then Waiter's goal is to get a red perfect matching
between the sets $W:=\{w_1,w_2,\ldots,w_{\tilde{\ell}}\}$
and $V:=\{v_1,v_2,\ldots,v_{\tilde{\ell}}\}$.
Due to the previous round, in the following rounds Waiter cannot offer $w_1v_{\tilde{\ell}}$, since 
either $w_1=w$ and $w_1v_{\tilde{\ell}}$ is blue or $w_1\neq w$ and $w_1v_{\tilde{\ell}}$ is free but,
if colored red, would violate condition~\ref{suitblue}.   
Other edges between $W$ and $V$ are still free. 
Hence, using Lemma~\ref{lem:perfect_matching}, 
Waiter can force a required red matching within $\tilde{\ell}+1$ rounds.

Now assume that $a_1=1$. This case is similar, yet a bit simpler, since we can apply Lemma~\ref{lem:perfect_matching} at once.  

In both cases Waiter wastes one red edge 
when she tries to force a perfect matching, so Stage II lasts $e(F)-e(F')+1$ rounds.
Furthermore, this extra red edge joins copies of two distinct forests $F_i$ and $F_j$,
and every blue edge claimed in Stage II joins copies of two distinct forests as well.

Summarizing, Stage I and Stage II last together $e(F)+1$ rounds and
all properties \ref{suitroots} -- \ref{suitred} are satisfied.
\end{proof}

In some cases Waiter may want to force a forest that has a smaller number of vertices than the board $B$. We formulate two lemmas, corresponding to Lemma~\ref{lem:stage1} and Lemma~\ref{lem:starind}, with an extra condition on the blue edges incident to vertices not belonging to the red forest obtained at the end. 

\begin{lemma} \label{lem:stage1m}
Let $F=(F_1,F_2,\ldots,F_{\ell})$ be an $m$-suitable rooted forest-tuple with the set of roots $R$. Let $B$ be a complete graph on $v(F)+m-e(F)$ vertices, with fixed $\ell$ vertices $u_1,u_2,\ldots, u_\ell$. Then, playing on $B$, Waiter can force a red copy $\bar{F'}=\bar{F'_1} \dcup \bar{F'_2} \dcup \ldots \dcup \bar{F'_{\ell}}$ of a valid subforest $F'=(F'_1, F'_2, \ldots, F'_{\ell})$ of $F$ such that $F-F'$ is $(m-e(F'))$-suitable and either $t(F-F')\le 2$ and $k(F-F')=1$, or $e(F-F')-t(F-F')\le 1$. 
Furthermore, she can achieve this goal within $e(F')$ rounds applying operations of type 1 or 2, and 
such that after the $j^{\text{th}}$ operation, Client's graph is a red copy $\bar F^j=\bar F^j_1 \dcup \bar F^j_2 \dcup \ldots \dcup \bar F^j_{\ell}$ of a valid subforest $F^j=(F^j_1, F^j_2, \ldots, F^j_{\ell})$ of $F$
satisfying the following properties.
 	\begin{enumerate}[(a)]
	\item\label{stage1mroots} 
     For every $r\in R$, the image of $r$ in $\bar F^j$ is $u_r$.
    \item No edges between the images of the roots are colored.
    \item Every blue edge lies between the sets $V(\bar F^j_{i})$ and $V(\bar F^j_{i'})$, for some $i\neq i'$.
    \item Every red edge is an edge of $\bar F^j$. 
    \item\label{stage1msuit} 
     $F-F^j$ is $(m-e(F^j))$-suitable.
	\end{enumerate}	 
\end{lemma}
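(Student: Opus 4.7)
The plan is to mimic the inductive construction from the proof of Lemma~\ref{lem:stage1}, but track $m$-suitability (rather than plain suitability) via Lemma~\ref{lem:add12}~\ref{addsuitm}, and stop at the relaxed termination criterion prescribed by the statement.

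First, I would set $F^0$ to be the edgeless rooted forest-tuple whose components are the singleton roots of $F$, with embedding $\bar F^0$ sending each root $r\in R$ to the prescribed fixed vertex $u_r$. All invariants hold vacuously since no edges are yet colored, and $F-F^0=F$ is $(m-e(F^0))$-suitable by hypothesis.

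Now, suppose that after $j$ operations the invariants hold and the stopping condition \emph{fails}; that is, $e(F-F^j)-t(F-F^j)\ge 2$, and at least one of $t(F-F^j)>2$ or $k(F-F^j)\ge 2$ is true. Waiter chooses the operation type $s\in\{1,2\}$ by the rule $\min(k(F-F^j),2)=3-s$: type~$1$ when $k(F-F^j)\ge 2$, and type~$2$ when $k(F-F^j)=1$. Feasibility reduces to verifying $t(F-F^j)\ge s+1$ and that at least $s$ free non-root vertices remain on the board. The first check is immediate: if $s=1$ then $t\ge k\ge 2$, and if $s=2$ then $k=1$ forces $t>2$ via the negation of the stopping condition. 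For the second, note that $v(F)=|R|+e(F)$ (each tree of a rooted forest has one more vertex than edges), so the board has $|R|+m$ vertices, of which $|R|+e(F^j)$ are occupied by $\bar F^j$; this leaves $m-e(F^j)$ free non-root vertices. Since the process continues, $e(F^j)=e(F)-e(F-F^j)\le m-2$, and so at least $2\ge s$ free vertices remain.

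Once Waiter performs the chosen operation, parts~(a)--(d) of Lemma~\ref{lem:add12} furnish $\bar F^{j+1}$ as a copy of a valid subforest $F^{j+1}$ of $F$ that preserves the coloring invariants of the statement, and part~\ref{addsuitm} --- applied with the condition $e(F-F^j)-t(F-F^j)\ge 2$ (matching the cleaner formulation in Lemma~\ref{lem:shrinksuit}~\ref{shrinksuitm}) --- yields that $F-F^{j+1}$ is $(m-e(F^{j+1}))$-suitable, since $e(F^{j+1})=e(F^j)+s$. Because $e(F-F^j)$ strictly decreases with each operation, the stopping condition is reached at some first index $j^*$, and the game ends after $e(F^{j^*})$ rounds, producing the desired valid subforest $F':=F^{j^*}$.

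The main obstacle is the careful case split on $k(F-F^j)$ needed to match the hypothesis $\min(k(F-F^j),2)=3-s$ of Lemma~\ref{lem:add12}~\ref{addsuitm}; everything else is bookkeeping. The free-vertex count works out exactly because the board size was chosen as $v(F)+m-e(F)=|R|+m$, which is precisely what is required to keep at least $s$ free non-root vertices available whenever the stopping criterion has not yet been triggered.
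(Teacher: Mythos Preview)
Your proof is correct and follows essentially the same inductive scheme as the paper: set up the trivial base case, continue applying operations of type~$1$ or~$2$ according to the rule $\min(k,2)=3-s$ while the stopping condition fails, and invoke Lemma~\ref{lem:add12}~\ref{addsuitm} for the $(m-e(F^{j+1}))$-suitability invariant. In fact you supply details the paper glosses over, namely the explicit free-vertex count $m-e(F^j)\ge 2$ and the check that $t(F-F^j)\ge s+1$, so your write-up is slightly more complete.
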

\begin{proof}
The argument is similar to the proof of Lemma~\ref{lem:stage1}, nonetheless we present the inductive argument (omitting a few details), since an $m$-suitable rooted forest-tuple may not be suitable. 

For $j\ge 0$, assume that $j$ operations of type 1 or 2 
were already played and that Client's graph
$\bar F_1^j \dcup \bar F_2^j \dcup \ldots \dcup \bar F_{\ell}^j$
is a copy of a rooted forest $F_1^j \dcup F_2^j \dcup \ldots \dcup F_{\ell}^j$
satisfying \ref{stage1mroots} -- \ref{stage1msuit}. Assume further that
$e(F-F^j)-t(F-F^j)\ge 2$, and $t(F-F^j)\ge 3$ or $k(F-F^j)\ge 2$, so Waiter is going to perform
the $(j+1)^{\text{st}}$ operation of type 1 or 2. If $k(F-F^j)\ge 2$, then Waiter performs the operation of type 1,
while if $k(F-F^j)=1$ and $t(F-F^j)\ge 3$, then the operation of type 2 is performed. 
Let $\bar F^{j+1}$ be the red graph arising from the operation of type $s$, $s\in\{1,2\}$, performed by Waiter.
In view of Lemma~\ref{lem:add12} applied to $F$ and $F^j$, the graph $\bar F^{j+1}$ is a copy of
a valid subforest of $F$. Let us denote it by $F^{j+1}$. Conditions \ref{stage1roots} -- \ref{stage1suit} are satisfied by $\bar F^{j+1}$ and $F^{j+1}$ in view of the inductive hypothesis and Lemma~\ref{lem:add12}.
In particular, the $(m-e(F^{j+1}))$-suitability of $F^{j+1}$ follows from Lemma~\ref{lem:add12}~\ref{addsuitm}, the assumption that $F-F^j$ is $(m-e(F^{j}))$-suitable, $e(F-F^j)-t(F-F^j)\ge 2$, and $m-e(F^{j})-s=m-e(F^{j+1})$.
This finishes the inductive argument.
\end{proof}

Let us recall that if a vertex $u$ of a rooted forest $F$ is a root, then we do not treat it as a leaf, even if $\deg_F(u)=1$. For example, an isolated edge in a rooted forest has only one leaf, not two. 

\begin{lemma}\label{starindmore}
Let $F=(F_1,F_2,\ldots,F_{\ell})$ be an $m$-suitable rooted forest-tuple and let $m>e(F)$.
Then playing on a complete graph $B$ on $m+v(F)-e(F)$ vertices, Waiter can force a red copy $\bar{F}=\bar{F}_1 \dcup \bar{F}_2 \dcup \ldots \dcup \bar{F}_{\ell}$ of $F_1 \dcup F_2 \dcup \ldots \dcup F_{\ell}$ 
within $e(F)$ rounds and such that the following properties hold.
	\begin{enumerate}[(a)]
	\item\label{mroots} All roots are mapped into vertices fixed at the beginning.
    \item No edges between the images of the roots are colored.
    \item For every $i\in[\ell]$, there are no blue edges spanned by vertices in $V({\bar F_i})$. 
    \item\label{mred} There are no red edges spanned by vertices in $V({\bar F_i})$ other than those of $\bar F_i$.
	\item\label{minter} All colored edges intersect $V(\bar F)$.
    \item\label{mleaf} Every vertex not in $V(\bar F)$ is incident with at most one colored edge, and the other endpoint of such an edge is not a leaf of $\bar{F}$.
	\end{enumerate}	 
\end{lemma}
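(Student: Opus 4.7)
The plan is to mirror the two-stage strategy of Lemma~\ref{lem:starind}, but to exploit the $m-e(F)$ surplus board vertices so as to save the one ``wasted'' round arising from Lemma~\ref{lem:perfect_matching} and simultaneously to arrange for properties (e)--(f). Stage~I is a direct application of Lemma~\ref{lem:stage1m}: within $e(F')$ rounds Waiter forces a red copy $\bar F'$ of a valid subforest $F'$ of $F$ so that $F-F'$ is $(m-e(F'))$-suitable and falls into either Case~A ($t(F-F')\le 2$ with $k(F-F')=1$) or Case~B ($e(F-F')-t(F-F')\le 1$). Since that lemma keeps every colored edge inside $V(\bar F')$, the $m-e(F')$ vertices outside $V(\bar F')$ are completely free when Stage~II begins.

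Stage~II will complete the embedding in exactly $e(F-F')$ further rounds using two building-block moves. An \emph{operation of type~1} picks non-isolated roots of two currently active forests $F^\ast_{i},F^\ast_{j}$ and offers both corresponding edges to a common fresh outside vertex $w$; this brings $w$ into $\bar F$, extends one of the two forests, and produces a single inter-tree blue edge. An \emph{extras offer} from a single root $u$ offers $uv_1,uv_2$ to two fresh outside vertices; one vertex joins $\bar F$ while the other stays outside carrying exactly one blue edge to $u$. In Case~A with $t=1$ (one active forest of $a$ edges) Waiter simply plays $a$ extras offers; the suitability bound $m-e(F')\ge 3a-1\ge 2a$ provides enough outside vertices. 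In Case~A with $t=2$ Waiter performs type~1 operations between the two active forests until one is exhausted, then switches to extras offers; the worst-case consumption is $2a_1+a_2\le 3a_1-1\le m-e(F')$ since $a_2\le a_1-1$. In Case~B with $a=1$ Waiter plays $t-1$ type~1 operations followed by one extras offer, using $t+1\le m-e(F')$ outside vertices (this needs only $m>e(F)$).

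The delicate subcase is Case~B with $a=2$ and $k=1$. Waiter first extends the unique $2$-edge forest $F^\ast_1$ by one edge via an extras offer at its root $u_1$, producing one polluted outside vertex $v_1$. For the ensuing matching of the $t$ remaining sources, Waiter begins with a type~1 operation that \emph{reuses} $v_1$ as common target, offering $w_iv_1,w_jv_1$ for two sources $w_i,w_j\neq u_1$; these edges are still free, and once $v_1$ joins $\bar F$ the earlier blue edge $u_1v_1$ automatically becomes inter-tree, which secures (c). This reuse spends no fresh outside vertex, compensating for the one polluted earlier. The remaining $t-2$ sources are matched by further type~1 operations and a closing extras offer, so the total outside budget is $t+2\le m-e(F')$, ensured by $m>e(F)$ when $t\ge 3$ and by the sharper suitability bound $m-e(F')\ge 5$ for the corner cases $t\in\{1,2\}$ (which also cover the ``star'' configuration in which the remaining root of $F^\ast_1$ coincides with $u_1$, so $v_1$ cannot be offered to that source). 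I expect this reuse of a polluted vertex---balanced against the need to avoid a monochromatic blue structure inside any $V(\bar F_i)$---to be the main technical difficulty of the proof.

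Properties (a)--(d) then follow exactly as in Lemma~\ref{lem:starind}: Stage~I guarantees them for $\bar F'$, and every Stage~II offer is either a type~1 operation (whose sole blue edge is inter-tree) or an extras offer (whose sole blue edge runs from $V(\bar F)$ to an outside vertex). Property (e) is immediate since each offered edge is incident to a vertex of $V(\bar F)$. For (f), each polluted outside vertex carries exactly one blue edge, whose other endpoint is a root of $F-F^j$ at some stage; such a root is either an original root of $F$ or a non-root vertex of $F$ whose total degree in $\bar F$ is at least~$2$, and hence is not a leaf of $\bar F$. Summing the two stages gives exactly $e(F')+e(F-F')=e(F)$ rounds as required.
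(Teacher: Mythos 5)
Your proof is correct and follows essentially the same route as the paper: Stage~I is a direct application of Lemma~\ref{lem:stage1m}, and Stage~II treats the two cases that lemma produces by combining type-$1$ operations, one type-$2$ operation, and greedy two-fresh-vertex rounds, with identical vertex accounting derived from $m$-suitability via Lemma~\ref{nsuitform}. Your ``extras offer at $u_1$ followed by a type-$1$ reuse of the polluted vertex $v_1$, offering $w_iv_1,w_jv_1$ for roots of two other forests'' is precisely the paper's type-$2$ operation (Definition~\ref{def:shrink}), and the only stylistic difference is that in Case~A you run type-$1$ operations ``until one forest is exhausted'' whereas the paper performs exactly $a_2$ of them; both yield the same worst-case fresh-vertex budget of $2a_1+a_2\le 3a(F^\ast)-1$.
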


\begin{proof}
We divide the game into two stages. In the first stage Waiter forces a red copy of a valid subforest $F'$ of $F$ such that $F-F'$ either has at most two non-empty forests or all forest in the tuple are very small. In the second stage, Waiter forces a copy of $F-F'$, which completes building a copy of $F$. 

\paragraph*{\textbf{Stage I}}

Waiter applies her strategy provided by Lemma~\ref{lem:stage1m} and forces Client to build a red copy 
$\bar F'=(\bar F'_1, \bar F'_2, \ldots, \bar F'_{\ell})$ of a 
valid subforest $F'=(F'_1, F'_2, \ldots, F'_{\ell})$ of $F$ such that $F^\ast:=F-F'$ is $(m-e(F'))$-suitable and either $t(F^\ast)\le 2$ and $k(F^\ast)=1$, or $e(F^\ast)-t(F^\ast)\le 1$. Furthermore, every root $u_r$ of $\bar F'$ is the image of $r$, for $r\in R$. It follows from Lemma~\ref{lem:stage1m} that no edges between the vertices $u_r$ are colored, all red edges are edges of $\bar F'$ and every blue edge lies between sets $V(\bar F'_{i})$ and $V(\bar F'_{j})$ for some $i\neq j$. The first stage lasts $e(F')$ rounds. 

\paragraph*{\textbf{Stage II}}

For $F^\ast = (F^\ast_1,\ldots,F^\ast_{\ell})$
we set $a_i = e(F^\ast_i)=e(F_i) - e(F'_i)$ for every $i\in [\ell]$.
By reordering the forests $F^\ast_1,\ldots,F^\ast_{\ell}$,
we can assume that $a_1 \geq a_2\geq\ldots \geq a_{\ell}$. 
We consider two cases depending on the properties of $F^\ast$ mentioned at the end of Stage I. 

\smallskip
\paragraph{Case 1} Suppose that $t(F^\ast)\le 2$ and $k(F^\ast)=1$. Then $a_1=a(F^*)$ and $a_2\le a(F^*)-1$ (in case $\ell=1$ we set $a_2=0$).

    If $a_1=1$, then Waiter can simply finish the game in one round, since 
    $v(B)=v(F)+m-e(F)\ge v(F)+1=v(F^\ast)+2$, and hence there are still two
    free vertices (which are not roots) at this moment. Here we used the assumption that $m>e(F)$. 
    
    If $a_1>1$, then in view of $(m-e(F'))$-suitability of $F^\ast$ and Lemma~\ref{nsuitform}, we have 
    $3a(F^\ast)+k(F^\ast)-(m-e(F'))\le 2$. Thus $m\ge 3a(F^\ast)-1+e(F')$ and the number of free vertices (which are not roots) is 
    $v(B)-v(F')=m-e(F)+v(F)-v(F')\ge 3a(F^*)-1\ge 2a_1+a_2$.

    For the next $a_2$ rounds, Waiter performs the operation of type 1. Let $\bar F''$ be the red graph arising after all $a_2$ operations. It follows from Lemma~\ref{lem:add12} and the properties of the colored graph at the end of Stage I that no edges between the vertices $u_r$ are colored, all red edges are edges of $\bar F''$ and every blue edge lies between distinct forests of $\bar F''$. Note that exactly $a_2$ free vertices where used in these $a_2$ rounds.
    
   The part of $F$ remaining for embedding has $a_1$ edges and since there are still at least $2a_1$ free vertices left, Waiter can simply use two free vertices for each remaining edge to force the desired red copy $\bar F$.
   Note that the only vertices not in $V(\bar{F})$ incident to a colored edge at the end of the game are the ones used during the last $a_1$ rounds. Every such vertex was used exactly once and the edge incident to that vertex was also incident to a~vertex which was already in the red forest and which had another red incident edge, so it was not a leaf. Therefore properties \ref{minter} and \ref{mleaf} are satisfied. Since the last $a_1$ rounds do not spoil properties \ref{mroots} -- \ref{mred}, we conclude that properties \ref{mroots} -- \ref{mleaf} hold. 

\smallskip
\paragraph*{Case 2} Suppose that Case 1 does not hold and $e(F^\ast)-t(F^\ast)\le 1$.

Then either $a_1=2$, $t(F^\ast)\ge 3$ and $a_2,a_3,\ldots, a_{\ell}\le 1$, or $a_1=1$. In the first case Waiter can perform the operation of type 2. The red graph $\bar F''$ arising after this operation satisfies $a(\bar F'')=1$ and further argument coincides with the analysis in case $a_1=1$. For the next $t(F^\ast)-1$ rounds, Waiter performs the operation of type 1. Thereby she forces a red matching of size $t(F^\ast)-1$ and only one edge of $F$ remains for embedding. Then Waiter can finish the game in one round, since there are two free vertices (which are not roots) at this moment. Since the operations of type 1 or 2 did not spoil properties \ref{mroots} -- \ref{mred}, and the last round did not spoil properties \ref{minter} and \ref{mleaf}, we obtain a desired red copy of $F$. 
\end{proof}
     
We are now ready to state and prove one of the main results of this section.

\begin{theorem}\label{thm:root1del}
	Let $0\leq q < t$ be integers, and let $F$ be a forest 
	with at most $t$ edges, consisting of $s$ rooted trees $T_1, T_2, \ldots, T_s$, and such that for each $i\in[s]$ we have $1\leq e(T_i) < \frac{t-q}{3}$.
    Let $r_1,\ldots,r_s$ be the roots of the corresponding trees.
	Consider a partially colored complete graph $B$ with $V(B) = U\dcup V$, $|U|=s$, $|V|=t$,
	and such that $B$ contains a collection of $s$ vertex-disjoint blue stars with centers $u_1, \ldots, u_s\in U$, with total number of edges equal $q\leq t$, 
	while all other edges of $B[U,V]$ and all edges of $B[V]$ are free.
	Then, playing on $B$, Waiter can force a red copy $\bar F$ of $F$ within at most 
	$e(F)+1$ rounds in such a way that the following holds.
	\begin{enumerate}[(a)]
	\item\label{root1del_roots} Every root $r_i$ is mapped into $u_i$.
	\item All colored edges of $\bar{F}$ intersect $V(\bar F)$.
	\item No edges connecting vertices from $U$ are offered by Waiter.
	\item\label{root1del_nonleaf} Every vertex not in $V(\bar F)$ is incident with at most one colored edge in $\bar{F}$, and the other endpoint of such an edge is not a leaf of $\bar F$.
	\end{enumerate}	  
\end{theorem}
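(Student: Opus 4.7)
The plan is to view $F$ as the rooted forest-tuple $(T_1, T_2, \ldots, T_s)$ and, depending on the size of $e(F)$ relative to $t-q$, to invoke Lemma~\ref{lem:starind} or Lemma~\ref{starindmore} (suitably adapted to the pre-existing blue edges). Let $V' \subseteq V$ be the set of the $q$ endpoints in $V$ of the pre-existing blue stars, and let $V'' := V \setminus V'$, so $|V''| = t - q$.

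\emph{If $e(F) \le t - q$:} I would confine Waiter's play entirely to the subboard $B_0 := B[U \cup V'']$, a complete graph on $s + (t - q)$ vertices with no colored edges. Since $a(F) = \max_i e(T_i) < (t - q)/3 = |V''|/3$ and $e(F) \le t - q$, Lemma~\ref{msuitable} tells us that $F$ is $(t-q)$-suitable. When $e(F) < t - q$, the condition $m > e(F)$ of Lemma~\ref{starindmore} is satisfied with $m := t - q$ (and the board $B_0$ has exactly $m + v(F) - e(F) = s + (t-q)$ vertices), producing $\bar F$ in $e(F) \le e(F) + 1$ rounds. When $e(F) = t - q$, the inequality $a(F) < e(F)/3$ combined with Lemma~\ref{suitable} shows that $F$ is suitable, and Lemma~\ref{lem:starind} applied to $B_0$ produces $\bar F$ in $e(F) + 1$ rounds. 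Properties (a)--(d) of the theorem would then follow from the conclusions of the invoked lemma, noting that the $V'$-vertices are never touched by Waiter and so each is incident only to its pre-existing blue edge to some $u_i = r_i$, a root and therefore not a leaf of $\bar F$.

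\emph{If $e(F) > t - q$:} Here some $V'$-vertices must serve as non-root images. My plan is to run the strategy of Lemma~\ref{starindmore} on the full board $B$ with $m := t$ (noting that $F$ is $t$-suitable since $a(F) < t/3$ and $e(F) \le t$), or the strategy of Lemma~\ref{lem:starind} when $e(F) = t$ (where $F$ is suitable by Lemma~\ref{suitable}), with the following modification: whenever an operation of type 1 or 2 in the underlying strategy involves a current root image equal to some original $u_i$, the free vertex $w$ (or the pair of free vertices $w_1, w_2$ in the type-2 case) is chosen outside $V'_i := N_B(u_i) \cap V'$. This both prevents Waiter from attempting to offer an edge that is already blue and prevents any $V'_i$-vertex from ever being placed inside $V(\bar F_i)$, which would otherwise violate the underlying lemma's invariant that no blue edge lies inside a single $V(\bar F_i)$.

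The hard part will be verifying that this restricted choice of $w$ is always feasible and that the strategy still terminates in at most $e(F) + 1$ rounds. My argument would rest on three points: the blue stars are vertex-disjoint, so $\sum_i |V'_i| = q$; each $u_i$ is a current root only during at most $\deg_{T_i}(r_i) \le e(T_i) < (t-q)/3$ operations, after which $u_i$ plays no further role in the strategy; and by Remark~\ref{rem:order} Waiter has freedom in the order in which she embeds the edges incident to each $u_i$, allowing her to handle the $u_i$-critical operations early, when many free vertices remain. Combining these observations with the vertex-counting already built into the proofs of Lemmas~\ref{lem:starind} and~\ref{starindmore} should confirm that a valid $w$ is always available; since the modification does not disturb the overall structure of those proofs, the round bound $e(F)+1$ and properties (a)--(d) would be inherited.
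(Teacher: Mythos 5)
Your Case~1 ($e(F)\leq t-q$), in which Waiter plays entirely on the clean subboard $B_0=B[U\cup V'']$ and one invokes Lemma~\ref{msuitable} or Lemma~\ref{suitable} followed by Lemma~\ref{starindmore} or Lemma~\ref{lem:starind}, is correct. It corresponds to the paper's base case $q=0$ together with the paper's $|U|\le 2$ case (where $e(F)<\frac{2}{3}(t-q)$ follows automatically). The problem is Case~2.

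In Case~2 you propose to re-run the internal strategy of Lemma~\ref{lem:starind} (or Lemma~\ref{starindmore}) on the full board, restricting the choice of the fresh vertex $w$ in each shrink operation to lie outside $V'_i$ whenever the current root image is the original $u_i$. You flag the feasibility of this restriction as ``the hard part,'' but there is a more fundamental obstruction: the modification only affects Stage~I of those lemmas' proofs (the sequence of type-1/2 operations). It does not address Stage~II, where Waiter forces a red perfect matching via Lemma~\ref{lem:perfect_matching} between a set $W$ of open vertices and a set $V$ of fresh vertices, and where the argument relies explicitly on ``other edges between $W$ and $V$ are still free.'' If some original $u_i$ is still the open vertex $w_i\in W$ at the start of Stage~II --- which happens, for instance, when $T_i$ is a star of size at most $2$ at $r_i$ so that $u_i$ need not be closed before the remaining forest-tuple becomes simple --- then the pre-existing blue edges $u_i v^*$, $v^*\in V'_i$, can lie between $W$ and $V$, destroying the single-missing-edge hypothesis of Lemma~\ref{lem:perfect_matching}. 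Remark~\ref{rem:order} only lets Waiter reorder the edges of a given $F_i$ across the type-1/2 operations; the boundary between Stage~I and Stage~II is forced by the structure of $F$ (stop when the remainder becomes simple), not chosen by Waiter, and in Lemma~\ref{lem:starind} (where $v(B)=v(F)$) there is no spare vertex with which to route around a blue-incident $v^*$ once it survives to Stage~II.

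The paper avoids this entirely by inducting on $q$. When $|U|\ge 3$, Waiter picks a blue-incident $v\in V$ and two roots $u_i,u_j$ with $u_iv,u_jv$ still free, offers $u_iv,u_jv$, and the vertex $v$ is thereby promoted to a fresh root image: its pre-existing blue edge becomes an edge between root images and hence irrelevant, while $q$ and $t$ both drop by one so $t-q$ is preserved. When $|U|\le 2$, the bound $e(F)<\frac{2}{3}(t-q)$ lets Waiter simply discard all $q$ blue-incident vertices of $V$ and invoke the base case. This recursion converts one pre-existing blue edge per step into an extra fixed root, rather than attempting to thread blue edges through the interior of Lemma~\ref{lem:starind} or Lemma~\ref{starindmore}. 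I recommend adopting this inductive reduction for Case~2.
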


\begin{proof}
We apply induction on $q$. 

For $q=0$ and $e(F)=t$ we have $a(F) < e(F)/3$. Hence, by Lemma~\ref{suitable}, $F$ is suitable, and by Lemma~\ref{lem:starind}, Waiter can force the desired copy of $\bar{F}$ in $B$.

For $q=0$ and $e(F)<t$ we also have $a(F)<t/3$. Hence, using Lemma~\ref{msuitable}, we get that $F$ is $t$-suitable. Then, by Lemma~\ref{starindmore}, Waiter can force the desired copy of $\bar{F}$ in $B$.

For $q>0$ we consider two cases, in both of which we start by mapping the roots $r_i$ to vertices $u_i$ in $U$.

\smallskip
\paragraph*{Case 1} $|U|\ge 3$.

Then there exists a vertex $v\in V$ which is incident to a blue edge and there are two root images $u_i,u_j\in U$, such that the edges $u_iv, u_jv$ are free. 
Waiter offers $u_iv,u_jv$. By symmetry we can assume that Client colors $u_iv$ red. Let $T_i$ be the component of $F$ with root $r_i$, and let $r_i'$ be any neighbor of $r_i$ in $T_i$.
We replace $F$ with $F':=F-r_ir_i'$, $U$ with $U':=U\cup\{v\}$ and $V$ with $V':=V\setminus\{v\}$, and add $r_i'$ to the set of roots of $F$
with a fixed image $v$. Moreover, if the new tree rooted at $r_i$ or $r_i'$ in $F'$ is trivial, we delete it from $F'$, and we delete the corresponding image of that root from $U'$ as well.
We are left with the forest $F'$ with 
$t'=t-1$ edges and a board $B':=B[U'\cup V']$ such that there are 
$q'=q-1$ blue edges forming stars with centers in $U'$ and edges towards $V'$.
As every component of $F'$ has less than $\frac{t-q}{3}=\frac{t'-q'}{3}$
edges, we can apply the induction hypothesis. Indeed, note that the edges offered in the first round are not incident with $V'$, and hence all edges in $B'[U',V']$ and $B'[V']$, apart from the $q-1$ initial blue edges, are free. This ensures that Waiter can force a copy of $F'$ where each of the roots $r_1,\ldots,r_s,r_i'$ is mapped to its fixed image $u_1,\ldots,u_s,v$, respectively. Together with the red edge $u_iv$, we get the desired copy $\bar F$ of $F$.

\smallskip
 
\paragraph*{Case 2} $|U|\le 2$.

Then there are at most two trees in $F$, and hence $e(F)<\frac{2}{3}(t-q)$.
In this case we can remove all vertices in $V$ which are incident to a blue edge, thus obtaining a subset $V'\subset V$ of $t':=t-q$ vertices. Let $B':=B[U\cup V']$ be the new board.
The smaller board $B'$ has $s+t'$ vertices and no blue edges. 
Since $e(F)<\frac23(t-q)\le t'$ 
and each component $T_i$ in $F$ satisfies $e(T_i)<\frac{t-q}{3}=\frac{t'}{3}$, 
the assertion follows from the induction base case applied to the board~$B'$. 
\end{proof}

The last result of this subsection is an observation that if the board is much larger than the target forest, then Waiter can easily achieve her goal.  

\begin{lemma}\label{lem:doubleroot1v}
	Let $F$ be a rooted forest  
	and let $B$ be a complete graph on at least $v(F)+e(F)$ vertices.
	Then Waiter can force a red copy $\bar F$ of $F$ in $B$ 
	within at most $e(F)$ rounds in such a way that 
\begin{enumerate}[(a)]
	\item\label{doubleroot1v_roots}
     all roots are mapped into vertices fixed at the beginning, 
	\item\label{doubleroot1v_nonleaf}
     every colored edge has an endpoint in a non-leaf of $\bar F$,
	\item\label{doubleroot1v_bluestars}
     all blue edges in $E_B(V(\bar F),V(B)\setminus V(\bar F))$ 
		form a star forest such that every star is centered in a 	
		vertex $u\in V(\bar F)$ and has at most $\deg_{\bar F}(u)$ 	
		edges.
	\end{enumerate}	
\end{lemma}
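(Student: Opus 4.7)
My plan is a straightforward greedy argument that exploits the slack $v(B)\ge v(F)+e(F)$. At the start, Waiter fixes arbitrary distinct images $f(r)\in V(B)$ for every root $r$ of $F$. She also fixes a BFS-type ordering $e_1,e_2,\ldots,e_{e(F)}$ of $E(F)$ with the property that for every $i$ one endpoint of $e_i=u_iv_i$, namely $u_i$, is either a root or the endpoint of some $e_j$ with $j<i$; such an ordering exists because $F$ is a rooted forest. The invariant to maintain is that just before round $i$ the vertex $u_i$ has already been assigned an image $f(u_i)\in V(B)$, while $v_i$ has not.

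In round $i$ Waiter picks any two vertices $w_1,w_2\in V(B)$ which have not been assigned as images and have never appeared as an endpoint of a previously claimed edge, and offers the edges $f(u_i)w_1$ and $f(u_i)w_2$. Both edges are free by the choice of $w_1,w_2$. Client claims one of them red, say $f(u_i)w_r$, and Waiter sets $f(v_i):=w_r$; the other edge $f(u_i)w_b$ becomes blue. A quick count shows this is always possible: before round $i$, at most $(v(F)-e(F))+(i-1)+(i-1)=v(F)-e(F)+2(i-1)$ vertices of $B$ have been touched (the images fixed so far, together with the $i-1$ blue endpoints from previous rounds), so for $i\le e(F)$ the assumption $v(B)\ge v(F)+e(F)$ leaves at least two untouched vertices.

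At the end of round $e(F)$ the image $\bar F:=f(F)$ is a red copy of $F$, and (a) is immediate from the construction. For (b), every colored edge is incident to some $f(u_i)$; since $u_i$ has the child $v_i$ in $F$, it has $\deg_F(u_i)\ge 1$ as a root (hence it is a non-leaf by the paper's convention) or $\deg_F(u_i)\ge 2$ as a non-root (with both a parent edge and the child $v_i$), again a non-leaf. For (c), every blue edge has the form $f(u_i)w_b$ with $w_b$ chosen outside the current image and never reused, so $w_b\notin V(\bar F)$; thus the blue edges in $E_B(V(\bar F),V(B)\setminus V(\bar F))$ form a star forest with centers in $V(\bar F)$, and the star at a vertex $u\in V(\bar F)$ contains exactly one edge per round in which $u_i=u$, which equals the number of children of $u$ in $F$ and is therefore at most $\deg_{\bar F}(u)$.

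I do not foresee any real obstacle; the lemma is essentially a bookkeeping statement, and the only mildly delicate point is distinguishing roots from non-roots in the verification of~(b).
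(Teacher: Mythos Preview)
Your proof is correct and follows essentially the same greedy approach as the paper: grow the partial embedding from the roots, always offering two edges from an already-embedded vertex towards two fresh vertices, and rely on the bound $v(B)\ge v(F)+e(F)$ to guarantee availability. Your write-up is in fact more careful than the paper's one-paragraph sketch --- you make the ordering explicit, spell out the vertex count, and explicitly exclude root images from the pool of candidate endpoints (a point the paper leaves implicit by just saying ``free'').
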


\begin{proof}
Waiter has enough space to force a red copy of every component 
of $F$ in a greedy way: starting from its roots, 
Waiter always offers two edges incident to a vertex of a partial 
embedding of $F$, with both other endpoints free. Then the 
properties \ref{doubleroot1v_roots}, \ref{doubleroot1v_nonleaf} and \ref{doubleroot1v_bluestars} are satisfied.
\end{proof}

\subsection*{Forcing forests with double-rooted components}

We have dealt with rooted forests so far. In the remaining part of the section we analyze games in which Waiter's aim is to obtain a red forest with double-rooted components (trees). We abandon the forest-tuples notation while stating the theorems, but it will still be present in the proofs.

\begin{lemma}\label{lem:doubleroot0} 
	Let $F$ be a forest such that each of its components is rooted or double-rooted. Assume that every tree in $F$ has at least 1 and less than $\frac{e(F)-d}{3}$ edges, 
	where $d$ is the number of double-rooted components of $F$, 
    and that in every double-rooted component its roots 
	are at distance at least 4.
	Then playing on a complete graph $B$ on $n\ge v(F)$ vertices, Waiter can force a red copy $\bar F$ of $F$ within at most $e(F)+d+1$ rounds
	in such a way that all roots are mapped into vertices fixed at 
	the beginning.	
\end{lemma}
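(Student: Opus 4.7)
My idea is to split each double-rooted tree into two rooted halves along a central edge of its root-to-root path, reducing the problem to the rooted-forest case of Theorem~\ref{thm:root1del}, and then use the remaining rounds to reconnect the halves. For each $i \in [d]$, let $P^i = r_1^i, v_1^i, \ldots, v_{k_i-1}^i, r_2^i$ be the root-to-root path of $T^i$ (with $k_i \ge 4$); pick a middle edge $e^i = v_{j_i}^i v_{j_i+1}^i$ with $1 \le j_i \le k_i - 2$, and let $F^\ast := F \setminus \{ e^i : i \in [d] \}$. The forest $F^\ast$ is rooted, has $e(F^\ast) = e(F) - d$ edges, and every component of $F^\ast$ is a subtree of some component of $F$, so has fewer than $(e(F) - d)/3 = e(F^\ast)/3$ edges.

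\textbf{Phase 1.} I would apply Theorem~\ref{thm:root1del} to $F^\ast$ with $q = 0$ and $t = e(F^\ast)$ on a $v(F)$-vertex subset of $B$, which is possible since $n \ge v(F)$. Within $e(F^\ast) + 1 = e(F) - d + 1$ rounds this produces a red copy $\bar F^\ast$ of $F^\ast$ in which every root of $F^\ast$ lies at its prescribed image; denote by $a_i, b_i$ the images of $v_{j_i}^i, v_{j_i+1}^i$. \textbf{Phase 2.} In the remaining $2d$ rounds, for each $i \in [d]$, I would like Waiter to force the red edge $a_i b_i$, completing a red copy of $F$ in at most $(e(F) - d + 1) + 2d = e(F) + d + 1$ rounds.

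\textbf{Main obstacle.} The key difficulty is Phase~2, and it has two layers. First, one must guarantee that after Phase~1 the edge $a_i b_i$ is still uncolored; I would address this by invoking Theorem~\ref{thm:root1del} through its underlying forest-tuple machinery (Lemma~\ref{lem:starind}), grouping the two halves $T^{i,L}, T^{i,R}$ of each split double-rooted tree into the same tuple component so that properties (c) and~(d) of Lemma~\ref{lem:starind} forbid any blue or spurious red edges inside $V(T^{i,L}) \cup V(T^{i,R})$, keeping $a_i b_i$ free. Second, forcing a single specified edge red within two rounds against optimal Client play is in general impossible, and so Phase~2 has to be redesigned rather than executed literally: a natural remedy is to split each $T^i$ by removing several neighbouring middle edges of $P^i$ (feasible since $k_i \ge 4$), producing multiple candidate reconnecting edges per $i$, and then running a matching-style sub-game in the spirit of Lemma~\ref{lem:perfect_matching} that forces an appropriate perfect matching of the $d$ pairs with one wasted red edge per pair plus one overall wastage, matching the $d + 1$ wasted rounds in the budget. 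The technical bookkeeping---tracking free edges throughout Phase~1, aligning the candidate reconnecting edges with the matching sub-game, and showing that the combinatorics close up exactly within $e(F) + d + 1$ rounds---is where the real work of the proof lies.
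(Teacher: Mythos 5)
Your high-level skeleton — delete one edge from each double-rooted tree, force the resulting rooted forest via the forest-tuple machinery, then reconnect — is exactly the paper's, and you correctly note that grouping the two halves of each $T^i$ into a single tuple component of Lemma~\ref{lem:starind} keeps the intended reconnecting edge free. You also correctly observe that Waiter cannot force a prescribed single edge red. But the remedy you sketch for Phase~2 is where the proof breaks, and the paper's resolution is genuinely different from what you propose.

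The paper does not split at the middle of the root-to-root path, and does not defer all reconnections to a separate phase at the end. Instead it distinguishes \emph{path-double-rooted} components (where the root-to-root path $P^i$ is a bare path) from all other double-rooted components, and chooses the deleted edge and the reconnection mechanism differently in the two cases. For a non-path-double-rooted $T^i$, it picks an inner vertex $v$ of $P^i$ with degree at least $3$ and a neighbour $w \notin V(P^i)$, deletes the edge $xy$ two steps beyond $v$, and \emph{interleaves} the reconnection into the middle of the Lemma~\ref{lem:starind} run: once the small subtree $T[V(P^i)\cup\{w\}] - xy$ has been embedded (but before the subtrees hanging off $x$ and $w$ are), Waiter offers $\bar y\bar x$ and $\bar y\bar w$, and whichever Client colours red can be relabelled as the $xy$-edge by swapping the roles of $x$ and $w$. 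This costs no wasted edge and crucially relies on the interruption happening before the rest of $T^i$ is placed, so a straight two-phase plan would not support it. For a path-double-rooted $T^j$, the paper deletes the \emph{last} edge $r'_jq$, leaving $r'_j$ as an isolated fixed root, builds the remaining rooted path $\bar r_j\bar v_1\cdots\bar v_q$, and then plays a two-round rerouting: offer $\bar v_q\bar v_{q-2}$ and $\bar v_q\bar v_{q-3}$, then depending on Client's choice offer $\bar r'_j\bar v_q$ and $\bar r'_j\bar v_{q-1}$ (or $\bar r'_j\bar v_{q-2}$). In every branch a red Hamilton path from $\bar r_j$ to $\bar r'_j$ of the correct length is produced, wasting exactly one red edge; this trick needs one end of the reconnection to be an isolated fixed vertex whose edges into the built path are all free, which is why the split is at the very end, not in the middle.

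Your proposed fix — deleting several neighbouring middle edges and running a matching-style sub-game in the spirit of Lemma~\ref{lem:perfect_matching} — does not close the gap. Deleting two or more consecutive middle edges creates isolated inner vertices (or more than two path segments) that then require \emph{all} removed edges back rather than ``any one of a candidate set'', so there is no genuine matching freedom; and Lemma~\ref{lem:perfect_matching} forces \emph{some} perfect matching in a bipartite graph, not a matching with prescribed endpoints and preserved path-lengths, which is what reconnection needs. A middle-split rerouting analogous to the paper's end-split one can be set up for long paths, but it fails for paths of length exactly $4$ (the extremal case allowed by the hypothesis ``roots at distance at least $4$''), because there are not enough inner vertices on one side of the cut to carry out the two-round shuffle. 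So you have the right skeleton and the right diagnosis of the obstacle, but the reconnection step — where, as you say, the real work lies — requires both a different choice of deleted edge and the two distinct tricks above, neither of which the matching sub-game idea supplies.
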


\begin{proof}
The main idea is to split every double-rooted tree into a forest consisting of two trees by removing one edge, and then follow the strategy from Lemma~\ref{lem:starind}. The main strategy will be interrupted at some point in order to connect such pairs of trees into one.

We call a double-rooted tree $T$ a \emph{path-double-rooted} tree, if every inner vertex of a path from one root to the other has degree exactly $2$ in $T$; in other words, the path connecting the roots in $T$ is a bare path. Suppose that $F = T_1 \dcup T_2 \dcup \ldots \dcup T_{\ell}$, where the first $d_1$ trees are double-rooted but not path-double-rooted, the next $d_2$ trees are path-double-rooted, and the remaining trees are rooted which means that they have exactly one root. Let $R_1$, $R_2$ and $R_3$ be the sets of roots of the above trees of three kinds, respectively, and let $V(B) = U_1\dcup U_2 \dcup U_3\dcup V$, where $U_i$ consists of the images of the roots in $R_i$, for $i\in[3]$. 

Consider a double-rooted tree $T_i$ which is not path-double-rooted, i.e.\ with $i\in[d_1]$. Suppose that $r_i$ and $r'_i$ are its two roots, $P_i$ is a path connecting $r_i$ to $r'_i$, and let $v$ be an inner vertex of degree at least 3 in $T_i$. Moreover, without loss of generality suppose that $v$ and $r'_i$ are at distance at least 2. 
Let $x$ be the neighbor of $v$ on a~path to $r'_i$ and let $y$ be the neighbor of $x$ on a path to $r'_i$ (note that if $v$ and $r'_i$ are at distance 2, then $y=r'_i$). 
Finally, let $w$ be a~neighbor of $v$ which does not belong to $P_i$. Set $T_i':=T[V(P_i)\cup\{w\}]$, $T_i^{(1)}\cup T_i^{(2)}:=T_i'-\{xy\}$  (see Figure~\ref{Fig:double-rooted_tree}) and $F_i':=T_i-xy$.
Furthermore, for every path-double-rooted tree $T_j$, i.e.\ with $j=d_1+k$ for $k\in[d_2]$, let $P_j$ be the path between the roots $r_j$ and $r'_j$ in $T_j$ and let $q$ be the neighbor of $r'_j$ on $P_j$. We set $T_j'=P_j-\{r'_j\}$ and $F_j':=F_j-r'_jq$. Hence $T_j'$ is just a~rooted path where the root is a~vertex of degree 1.

\begin{figure}[H]\label{normaldoublerooted}
    \begin{tikzpicture}
        \centering
        \coordinate (r1) at (1,0);
        \coordinate (v) at (4,0);
        \coordinate (x) at (6,0);
        \coordinate (y) at (8,0);
        \coordinate (r2) at (11,0);
        \coordinate (w) at (4,-2);
        \begin{pgfonlayer}{front}
    		\foreach \i in {r1,v,x,y,r2,w} \fill (\i) circle (2pt);	
            \draw (w) -- (v) -- (x) -- (y) (r1) -- (2,0) (3,0) -- (v) (y) -- (9,0) (10,0) -- (r2);
            \draw[dotted] (2.1,0) -- (2.9,0) (9.1,0) -- (9.9,0);
            \node at (1,0) [above] {$r_i$};
            \node at (4,0) [above] {$v$};
            \node at (6,0) [above] {$x$};
            \node at (8,0) [above] {$y$};
            \node at (11,0) [above] {$r'_i$};
            \node at (4,-2) [below] {$w$};
        \end{pgfonlayer}

        \draw[dashed] (1,0.5) arc[radius=0.5,start angle= 90,end angle=270];
        \draw[dashed] (3.5,-2) arc[radius=0.5,start angle= 180,end angle=360];
        \draw[dashed] (6,-0.5) arc[radius=0.5,start angle= 270,end angle=450];
        \draw[dashed] (1,-0.5) -- (3.5,-0.5) -- (3.5,-2);
        \draw[dashed] (4.5,-2) -- (4.5,-0.5) -- (6,-0.5);
        \draw[dashed] (6,0.5) -- (1,0.5);

        \draw[dashed] (8,0.5) arc[radius=0.5,start angle= 90,end angle=270];
        \draw[dashed] (11,-0.5) arc[radius=0.5,start angle= 270,end angle=450];
        \draw[dashed] (8,0.5) -- (11,0.5) (8,-0.5) -- (11,-0.5);

        \node at (2,-1) [below] {$T_i^{(1)}$};
        \node at (10,-1) [below] {$T_i^{(2)}$};

    \end{tikzpicture}
\caption{\label{Fig:double-rooted_tree} The subtree $T_i'$ of the double-rooted tree $T_i$, which is not path-double-rooted. The forest $F_i'$ contains trees $T_i^{(1)}$ and $T_i^{(2)}$, which are forced before the rest of $T_i$.}
\end{figure}
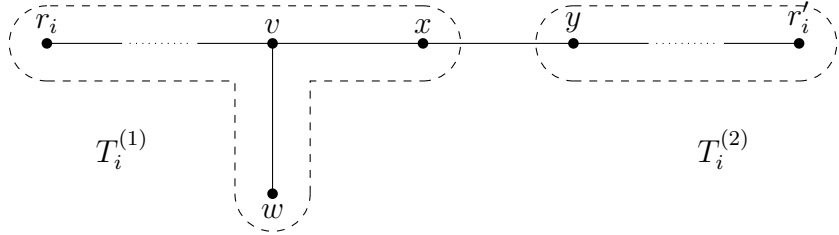

We now consider the rooted forest-tuple $F'=(F_1', F_2',\ldots,F_{\ell}')$, where for 
$h > d_1+d_2$, we set $F_h' = T_h$. We claim that $F'$ satisfies the assumptions of Lemma~\ref{lem:starind}. 
We need to check whether $F'$ is suitable. Since $e(T_i)\leq \frac{e(F)-d}{3}$ for any $i\in[\ell]$, and since we have removed from $F$ exactly $d$ edges, we have $a(F')\leq \frac{e(F)-d}{3} = \frac{e(F')}3$. Thus, by Lemma~\ref{suitable}, $F'$ is suitable.

Waiter now plays according to the strategy in the proof of  Lemma~\ref{lem:starind}, which will be interrupted in order to connect the roots in each image of a double-rooted tree as follows.
Before we proceed, let us recall that the operations of type 1 or 2 are the heart of the proof of Lemma~\ref{lem:starind}, so in view of Remark~\ref{rem:order}, Waiter has some flexibility while choosing the order of the edge embedding for a given forest.  
Hence, while forcing $F_i'$ for $i\in[d_1]$, she can first force copies of $T_i^{(1)}$ and $T_i^{(2)}$, and while forcing $F_j'$ for $j=d_1+k$ with $k\in[d_2]$, she first forces $T_j'$.

Suppose that for a tree $T_i$ which is double-rooted but not path-double-rooted, Waiter already forced in $B$ its subforest $\bar{F}_i'$ (see Figure~\ref{Fig:double-rooted_tree}) in such a way, that the only colored edges spanned by vertices in $V(\bar{F}_i')$ are the red edges in $\bar{F}_i'$. In particular, the edges $\bar{y}\bar{x}$ and $\bar{y}\bar{w}$, where $\bar{x},\bar{y},\bar{w}$ are the images of $x, y, w$, respectively, are still free. Hence, Waiter offers $\bar{y}\bar{x}$ and $\bar{y}\bar{w}$. Now no matter which of the two edges Client chooses, Waiter forces a red copy $\bar{T}_i'$ of the double-rooted subtree $T_i'\subset T_i$, possibly changing the role of $x$ and $w$. Waiter repeats this strategy for each tree $T_i$ with $i\in[d_1]$ using exactly $d_1$ rounds to create $d_1$ red edges 
not in $E\big(\bigcup_{i\in[d_1]}\bar{F}_i'\big)$ (but included in $\bar F$).

As for a path-double-rooted tree $T_j$, suppose that at some point Waiter forced a copy $\bar{F}_j'$ of a rooted path $P_j-\{r'_j\}:=r_jv_1v_2 \dots v_q$, which is a part of the double-rooted path $P_j$ with one missing edge $v_qr'_j$. Let $\bar{r}_j, \bar{v}_1, \bar{v}_2, \dots, \bar{v}_q$ be the corresponding images of vertices in $B$. Note also that since the length of $P$ is at least 4, we have $q\geq 3$.  Let $\bar{r}'_j$ be the image of $r'_j$ in $U_2$. Then all edges between $\bar{r}'_j$ and $V(\bar{F}_j')$ are free. First Waiter offers the edges $\bar{v}_q\bar{v}_{q-2}$ and $\bar{v}_q\bar{v}_{q-3}$ which, again by the hypothesis of Lemma~\ref{lem:starind}, are free. If Client chooses $\bar{v}_q\bar{v}_{q-2}$, then in the following round Waiter offers $\bar{r}'_j\bar{v}_q$ and $\bar{r}'_j\bar{v}_{q-1}$. But then, even if Client colors $\bar{r}'_j\bar{v}_q$ blue, a red path of length $v(P_j)$ between $\bar{r}_j$ and $\bar{r}'_j$ is created, namely we have a red path $\bar{r}_j\bar{v}_1\dots\bar{v}_{q-2}\bar{v}_q\bar{v}_{q-1}\bar{r}'_j$. Similarly, if in the previous round Client chooses $\bar{v}_q\bar{v}_{q-3}$, then in the following round Waiter offers $\bar{r}'_j\bar{v}_q$ and $\bar{r}'_j\bar{v}_{q-2}$. Again, even if Client colors $\bar{r}'_j\bar{v}_q$ blue, a red path $\bar{r}_j\bar{v}_1\dots\bar{v}_{q-3}\bar{v}_q\bar{v}_{q-1}\bar{v}_{q-2}\bar{r}'_j$ is created (see Figure~\ref{Fig:double-rooted_path}). Hence, Waiter forces a red copy of $\bar{P}_j$ of the double-rooted path $P_j$. Waiter repeats this strategy for each tree $T_j$, $j=d_1+k$ for $k\in[d_2]$, creating $d_2$ extra red edges (not included in $\bar{F}$). 

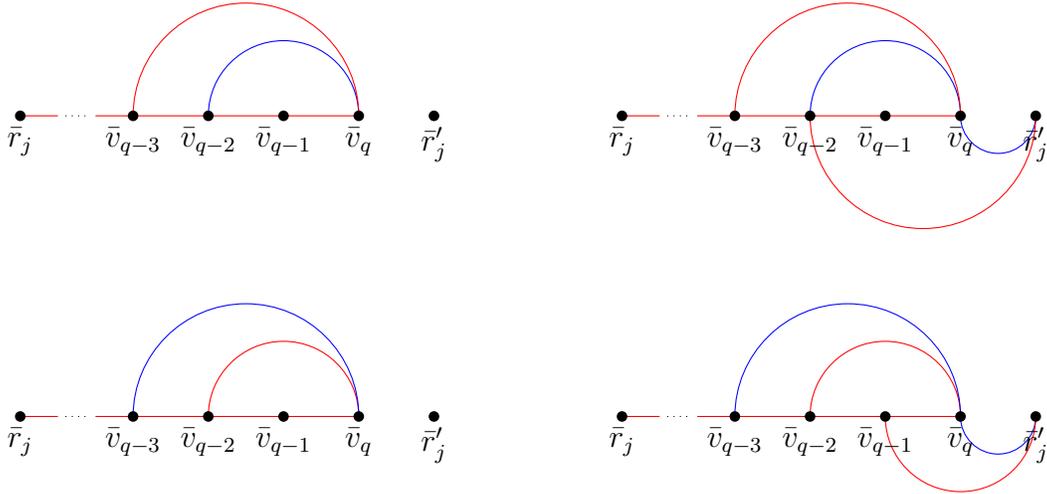
\begin{figure}[H]
    \begin{tikzpicture}
        \centering

        \coordinate (r1) at (-0.5,4);
        \coordinate (vq3) at (1,4);
        \coordinate (vq2) at (2,4);
        \coordinate (vq1) at (3,4);
        \coordinate (vq) at (4,4);
        \coordinate (r2) at (5,4);
        \begin{pgfonlayer}{front}	
            \draw[red] (r1) -- (0,4) (0.5,4) -- (vq3) -- (vq2) -- (vq1) -- (vq);
            \draw[dotted] (0.1,4) -- (0.4,4);
            \foreach \i in {r1,vq3,vq2,vq1,vq,r2} \fill (\i) circle (2pt);
            \node at (-0.5,4) [below] {$\bar{r}_j$};
            \node at (1,4) [below] {$\bar{v}_{q-3}$};
            \node at (2,4) [below] {$\bar{v}_{q-2}$};
            \node at (3,4) [below] {$\bar{v}_{q-1}$};
            \node at (4,4) [below] {$\bar{v}_q$};
            \node at (5,4) [below] {$\bar{r}'_j$};
        \end{pgfonlayer}

        \draw[blue] (4,4) arc[radius=1,start angle=0,end angle=180];
        \draw[red] (4,4) arc[radius=1.5,start angle=0,end angle=180];

        \coordinate (r1) at (7.5,4);
        \coordinate (vq3) at (9,4);
        \coordinate (vq2) at (10,4);
        \coordinate (vq1) at (11,4);
        \coordinate (vq) at (12,4);
        \coordinate (r2) at (13,4);
        \begin{pgfonlayer}{front}
            \draw[red] (r1) -- (8,4) (8.5,4) -- (vq3) -- (vq2) -- (vq1) -- (vq);
            \draw[dotted] (8.1,4) -- (8.4,4);
            \foreach \i in {r1,vq3,vq2,vq1,vq,r2} \fill (\i) circle (2pt);	
            \node at (7.5,4) [below] {$\bar{r}_j$};
            \node at (9,4) [below] {$\bar{v}_{q-3}$};
            \node at (10,4) [below] {$\bar{v}_{q-2}$};
            \node at (11,4) [below] {$\bar{v}_{q-1}$};
            \node at (12,4) [below] {$\bar{v}_q$};
            \node at (13,4) [below] {$\bar{r}'_j$};
        \end{pgfonlayer}

        \draw[blue] (12,4) arc[radius=1,start angle=0,end angle=180];
        \draw[red] (12,4) arc[radius=1.5,start angle=0,end angle=180];
        \draw[blue] (12,4) arc[radius=0.5,start angle=180,end angle=360];
        \draw[red] (10,4) arc[radius=1.5,start angle=180,end angle=360];
        
        \coordinate (r1) at (-0.5,0);
        \coordinate (vq3) at (1,0);
        \coordinate (vq2) at (2,0);
        \coordinate (vq1) at (3,0);
        \coordinate (vq) at (4,0);
        \coordinate (r2) at (5,0);
        \begin{pgfonlayer}{front}
            \draw[red] (r1) -- (0,0) (0.5,0) -- (vq3) -- (vq2) -- (vq1) -- (vq);
            \draw[dotted] (0.1,0) -- (0.4,0);
            \foreach \i in {r1,vq3,vq2,vq1,vq,r2} \fill (\i) circle (2pt);	
            \node at (-0.5,0) [below] {$\bar{r}_j$};
            \node at (1,0) [below] {$\bar{v}_{q-3}$};
            \node at (2,0) [below] {$\bar{v}_{q-2}$};
            \node at (3,0) [below] {$\bar{v}_{q-1}$};
            \node at (4,0) [below] {$\bar{v}_q$};
            \node at (5,0) [below] {$\bar{r}'_j$};
        \end{pgfonlayer}

        \draw[red] (4,0) arc[radius=1,start angle=0,end angle=180];
        \draw[blue] (4,0) arc[radius=1.5,start angle=0,end angle=180];

        \coordinate (r1) at (7.5,0);
        \coordinate (vq3) at (9,0);
        \coordinate (vq2) at (10,0);
        \coordinate (vq1) at (11,0);
        \coordinate (vq) at (12,0);
        \coordinate (r2) at (13,0);
        \begin{pgfonlayer}{front}
            \draw[red] (r1) -- (8,0) (8.5,0) -- (vq3) -- (vq2) -- (vq1) -- (vq);
            \draw[dotted] (8.1,0) -- (8.4,0);
            \foreach \i in {r1,vq3,vq2,vq1,vq,r2} \fill (\i) circle (2pt);	
            \node at (7.5,0) [below] {$\bar{r}_j$};
            \node at (9,0) [below] {$\bar{v}_{q-3}$};
            \node at (10,0) [below] {$\bar{v}_{q-2}$};
            \node at (11,0) [below] {$\bar{v}_{q-1}$};
            \node at (12,0) [below] {$\bar{v}_q$};
            \node at (13,0) [below] {$\bar{r}'_j$};
        \end{pgfonlayer}

        \draw[red] (12,0) arc[radius=1,start angle=0,end angle=180];
        \draw[blue] (12,0) arc[radius=1.5,start angle=0,end angle=180];
        \draw[blue] (12,0) arc[radius=0.5,start angle=180,end angle=360];
        \draw[red] (11,0) arc[radius=1,start angle=180,end angle=360];

    \end{tikzpicture}
\caption{\label{Fig:double-rooted_path} Connecting the roots $\bar{r}_j$ and $\bar{r}'_j$ to get a double-rooted path $\bar{P}_j$: in the first round Waiter offers edges $\bar{v}_q\bar{v}_{q-3}$ and $\bar{v}_q\bar{v}_{q-2}$.}
\end{figure}



In view of  Lemma~\ref{lem:starind}, while creating a red copy of $F'$, Waiter creates only one red edge not included later in $\bar F$. Thus the above strategy guearantees 
the desired red copy $\bar{F}$ of $F$ within at most $e(F) + 1 + d_2 \leq e(F) + d + 1$ rounds. 
\end{proof}

While most of the previous lemmas deal with forests in which the size of each component is bounded, we will also need strategies for forests where the components can be larger but the maximum degree is bounded sufficiently. 

\begin{lemma}\label{lem:doubleroot1deg}
	Let $F$ be a forest such that each of its components 
	is rooted or double-rooted.
	Suppose that $d\ge 0$ of the components 
	of $F$ are double-rooted trees,
	and the two roots of such components are distant by at least $7$.
	Let $B$ be a complete graph on at least $v(F)+\Delta(F)$ 
	vertices. Then Waiter can force a copy $\bar F$ of $F$ in $B$
	within at most $e(F)+d$ rounds in such a way that
	\begin{enumerate}[(a)]
	\item\label{doubleroot1deg_roots} 
        all roots are mapped into vertices fixed at the beginning,
	\item\label{doubleroot1deg_nonleaf}  
        every colored edge has an endpoint in a non-leaf of $\bar F$,
	\item\label{doubleroot1deg_bluestars}  
        every non-leaf of $\bar F$ is incident with at most 
		$\Delta(F)$ blue edges whose other endpoint is in 
		$V(B)\setminus V(\bar F)$.
	\end{enumerate}	 
\end{lemma}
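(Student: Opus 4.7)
The plan is to reduce $F$ to a rooted forest by cutting one edge of each double-rooted component, embed that rooted forest greedily using a buffer strategy, and then restore the cut edges via a detour trick. For each double-rooted tree $T_i$ with roots $r_i, r_i'$ at path distance $\ell_i \geq 7$ along path $P_i$, I remove one edge from the interior of $P_i$ (well away from both roots), splitting $T_i$ into two rooted subtrees. Let $F'$ denote the resulting rooted forest, with $e(F')=e(F)-d$ edges.

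For the greedy embedding of $F'$, Waiter fixes the images of all roots at the start and proceeds to extend the red forest one edge at a time. At each extension from a vertex $x$ (always chosen to be a non-leaf of the target), Waiter offers two edges $xy_1, xy_2$ with $y_1, y_2 \in V(B)\setminus V(\bar F')$ such that both edges are free; Client claims one (extending the red forest by a new vertex) and the other becomes a blue ``decoy'' at $x$. Such a free pair exists because the number of free decoy edges from $x$ to $V(B)\setminus V(\bar F')$ equals $|V(B)| - |V(\bar F')| - k_x$, where $k_x$ counts prior decoys at $x$; at the worst case of the last extension from a root $x$ of degree $\Delta(F)$ in the target, this evaluates to $(v(F)+\Delta(F)) - (v(F)-1) - (\Delta(F)-1) = 2$, exactly enough. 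The phase takes $e(F')$ rounds in total: condition (a) holds by root-fixing, (b) holds because each blue edge originates at the extending non-leaf, and (c) holds because each non-leaf accumulates at most $\Delta(F)$ such decoys.

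For each double-rooted $T_i$, I add the cut edge back using two rounds that mirror the detour of Lemma~\ref{lem:doubleroot0}. If $\bar v_j$ is the tip of the $r_i$-side subtree in the embedding and $\bar v_{j+1}$ is the tip of the $r_i'$-side subtree, round one offers two chord edges from $\bar v_j$ back to earlier path vertices (namely $\bar v_{j-2}$ and $\bar v_{j-3}$), and round two offers edges from $\bar v_{j+1}$ to $\bar v_j$ and a nearby path vertex, adjusted according to Client's first choice. In every possible outcome the red graph contains a path of length $\ell_i$ from $\bar r_i$ to $\bar r_i'$ which, combined with the branches grown greedily in the previous phase, yields the desired copy of $T_i$ (subject to a possible local vertex swap handled below). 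This costs $2d$ rounds total, giving an overall count of $e(F')+2d = e(F)+d$ as required.

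The main obstacle is the case where $P_i$ is not bare: in Client's ``swap'' outcome the detour interchanges the roles of two (or three) consecutive path vertices in the resulting copy, and any branches hanging off these vertices would then appear at the wrong positions. I handle this by deferring the greedy growing of the branches attached to the swap-sensitive path vertices until after the detour rounds for $T_i$ are resolved; Client's choices then determine which path vertex plays which role in the copy, and the deferred branches are attached accordingly, exploiting the flexibility of Remark~\ref{rem:order}. The distance $\ell_i \geq 7$ provides enough room in the interior of $P_i$ to position the cut edge and the deferred-branch vertices consistently, and ensures that all chord edges produced by the detour lie between inner path vertices, which are always non-leaves of $\bar F$, thereby preserving conditions (b) and (c) throughout.
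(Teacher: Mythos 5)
Your plan takes a genuinely different route from the paper. The paper partitions $V(B)$ in advance into disjoint sets $V_1,\ldots,V_d,V_{d+1}$ with $|V_i|=e(P_i)+1$, plays Lemma~\ref{lem:Ham.path.ends.fixed} inside each $B[V_i]$ to produce the root-to-root paths $P_i$ first (at a cost of $e(P_i)+1$ rounds each), and only afterwards extends greedily into $V_{d+1}$; your plan instead cuts an interior edge of each $P_i$, grows the resulting rooted forest greedily, and rejoins with a two-round detour as in Lemma~\ref{lem:doubleroot0}.

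The gap is in the detour phase: you do not have a guarantee that the chord edges $\bar v_j\bar v_{j-2}$, $\bar v_j\bar v_{j-3}$, $\bar v_{j+1}\bar v_{j-1}$, etc.\ are free when the detour is played. In your greedy phase the decoy targets $y_1,y_2$ only need to lie outside the current $V(\bar F')$; they are allowed to already carry colored edges, and Client --- not Waiter --- decides which of the two offered vertices joins the red forest and which becomes the decoy. Consequently a vertex that receives a blue decoy edge from, say, $\bar v_{j-2}$ can in a later round be absorbed as the red child of $\bar v_{j-1}$, i.e.\ become $\bar v_j$, and then the chord $\bar v_{j-2}\bar v_j$ is already blue when you need it. The detour of Lemma~\ref{lem:doubleroot0} is only valid under the invariant (supplied there by Lemma~\ref{lem:stage1}) that no extra edges are colored inside the partial embedding of a component; your greedy phase does not maintain that invariant. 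Moreover, the board slack is only $v(B)-v(F)\ge\Delta(F)$, so you cannot simply demand that each new path vertex have free edges back to $\bar v_{j-2}$ and $\bar v_{j-3}$ in addition to the conditions you already impose: near the end of the embedding that would require more free choices than the counting argument $v(B)-|V(\bar F')|-k_x\ge 2$ provides, especially for small $\Delta(F)$ (for $\Delta(F)=2$ there are exactly two admissible vertices and no room to exclude previous decoys). The paper sidesteps this entirely by reserving the vertex sets $V_i$ and running the Hamilton-path strategy there before any greedy decoys are placed, so that all chord edges used inside $V_i$ are clean when Lemma~\ref{lem:Ham.path.ends.fixed} needs them. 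Your deferral idea for branches off the swap-sensitive vertices is reasonable and in the spirit of Remark~\ref{rem:order}, but it does not repair the chord-freeness problem, which is the real obstruction.
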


\begin{proof}
Let $T_1,T_2,\ldots,T_d$ be the components of $F$ with two roots,
and denote with $x_i$ and $y_i$ the roots of $T_i$ for every 
$i\in [d]$. Let $x_i'$ and $y_i'$ be their 
fixed images, and
let $P_i$ be the unique path between $x_i$ and $y_i$ in $F$, where we let $d_i=e(P_i)\geq 7$.
Before the game starts, partition $V(B)$ into vertex sets $V_1,V_2,\ldots,V_d,V_{d+1}$ such that
for every $i\in [d]$, we have
$|V_i|=d_i+1$ and that $V_i$ 
contains $x_i'$ and $y_i'$ but no other fixed image of roots in $F$. 
Let $F_0$ be the union of all paths $P_i$.

In the first stage of the game, for each $i\in [d]$, Waiter plays on each of the boards $B[V_i]$ according to
the strategy from Lemma~\ref{lem:Ham.path.ends.fixed}
in order to claim a Hamilton path in $B[V_i]$
between $x_i'$ and $y_i'$, i.e.~a copy of $P_i$. 
In total, this stage lasts
$\sum_{i\in [d]} (e(T_i)+1) = e(F_0)+d$ rounds.

Afterwards, for the second stage, Waiter can extend
the copy of $F_0$
in a greedy way to a copy of $F$, while keeping the images of all roots fixed. To be more precise,
as long as there is a vertex $x$ in the image of a partial embedding of $F$ at which still some number $t \leq \Delta(F)$ of  edges need to be claimed,
Waiter simply plays $t$ rounds in each of which she offers two edges between $x$ and the free vertices. Note that this is possible
since $v(B)\geq v(F)+\Delta(F)$.
Moreover, this second stage lasts precisely $e(F_0)-e(F_1)$ rounds,
so that after a total of $e(F)+d$ rounds
Waiter has forced a copy of $F$. Property~\ref{doubleroot1deg_roots} is immediate. Property~\ref{doubleroot1deg_nonleaf} holds since in the
first stage no leaf is embedded and since in the second stage, Waiter only offers edges incident to some vertex $x$ which already belongs to the embedding and still needs at least one more neighbor, thus cannot be a leaf. Moreover, as Waiter only offers edges between such a vertex $x$ and the remaining free vertices
as long as $x$ is still missing some neighbors,
which happens at most $\Delta(F)$ rounds, property~\ref{doubleroot1deg_bluestars}  follows.
\end{proof}

The following lemma 
is a variant of Theorem 5.2 in~\cite{clemens2020fast} and
can be proven analogously. It roughly states that a spanning tree of not too large maximum degree can be forced by Waiter fast, even if the images of $r\in \{1,2\}$ vertices are fixed at the beginning of the game. In fact, the case $r=1$ is already covered by Theorem 5.2 in~\cite{clemens2020fast}, while the case $r=2$ only requires a few modifications. We therefore omit the details here, and move the proof to the appendix.

\begin{lemma}\label{lem:doubleroot2}
	There exists $\alpha>0$ and $n_\alpha$ such that 
	for every $n\ge n_\alpha$ the following holds. 
	Let $T$ be tree on $n$ vertices, with $r\in \{1,2\}$ roots and with 
	$\Delta(T)\le \alpha \sqrt n$,
	and such that its roots (if there are two of them) are distant 
	by at least 7. Then Waiter can force a copy $\bar T$ of $T$ 
	in $K_n$ within at most $e(T)+r$ rounds,
	in such a way that all roots are mapped into vertices 
	fixed at the beginning. 
\end{lemma}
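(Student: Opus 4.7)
Since the case $r=1$ is already covered by Theorem 5.2 of~\cite{clemens2020fast}, the plan is to adapt that strategy to accommodate a second fixed root. Let $x,y$ be the two roots of $T$ with fixed target images $x',y'\in V(K_n)$, and let $P=u_0u_1\ldots u_k$ be the unique path between $x=u_0$ and $y=u_k$ in $T$, where $k\geq 7$ by assumption. Root $T$ at $x$, so that each $u_i$ carries a subtree $T_i$ hanging off $P$, and $u_k$ itself is a lowest-level root with subtree $T_k$.

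The plan is to invoke the $r=1$ strategy of~\cite{clemens2020fast} with root $x'$ while reserving, from the start, a constant-size cluster $S\subset V(K_n)$ containing $y'$ together with roughly $k$ additional free vertices, chosen to avoid $x'$. During the main embedding phase Waiter follows the Theorem~5.2 strategy, but steers away from $S$: whenever the strategy would offer an edge incident to a vertex of $S$, Waiter instead selects an equivalent replacement among the many available free vertices outside $S$ (possible because $|S|$ is a constant while at every stage the number of free vertices outside the current embedding is linear in $n$ and the vertex-degree budget of $\alpha\sqrt{n}$ is far from tight). This way, by round $e(T)-\Theta(1)$ Waiter has forced a copy of $T\setminus(E(T_k)\cup\{u_{k-1}u_k,u_{k-2}u_{k-1},\ldots\})$ with the images of $u_0,u_1,\ldots,u_{k-3}$ already fixed and with all vertices of $S$ still free.

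The second new ingredient is a steering finish, analogous in spirit to Lemma~\ref{lem:Ham.path.ends.fixed}: knowing the image of $u_{k-3}$, Waiter completes $P$ inside $S\cup\{f(u_{k-3})\}$ so that its endpoint lands on $y'$, and afterwards embeds the remaining subtree $T_k$ inside $S$ in a purely greedy way (each remaining non-leaf of $T_k$ has degree at most $\alpha\sqrt{n}$, which is much less than $|S|$ when $|S|$ is chosen with a sufficient safety margin depending on $\Delta(T)$). The three-round steering gadget at the end of Lemma~\ref{lem:Ham.path.ends.fixed} uses one more round than the length of the path it forces, and this single extra round is precisely the additional round permitted by the bound $e(T)+r=e(T)+2$ (as opposed to $e(T)+1$ for $r=1$).

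The main obstacle is verifying that setting aside the cluster $S$ and refusing to offer edges incident to $y'$ during the main phase does not break the delicate degree and free-vertex bookkeeping that Theorem~5.2 relies on in order to handle maximum degree up to $\alpha\sqrt{n}$. The crucial observations are that $|S|$ can be taken to be a constant depending only on $\Delta(T)$ (hence $O(\sqrt{n})$), and that throughout the Theorem~5.2 argument every potential move has $\Omega(n)$ legal replacements; the inductive invariants therefore survive after subtracting the negligible contribution of $S$, provided $\alpha$ is chosen slightly smaller than in the original proof. Carrying out this bookkeeping line by line is long but mechanical, which is precisely why we defer it to the appendix.
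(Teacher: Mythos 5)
Your proposal has a genuine gap, and the gap lies in the size assumptions about the reserved cluster $S$ and the subtree $T_k$.

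You write that $S$ is a ``constant-size cluster'' containing $y'$ and ``roughly $k$ additional free vertices'', and later that $|S|$ is ``a constant depending only on $\Delta(T)$ (hence $O(\sqrt n))$.'' These are already inconsistent with each other (a constant is not $O(\sqrt n)$), but more fundamentally neither bound is enough. After the main phase you must embed all of $T_k$ (the subtree hanging off $u_k=y$) plus the tail of $P$ inside $S$, and you do this greedily. A greedy embedding of $T_k$ requires $|S|\gtrsim v(T_k)$, not $|S|\gtrsim\Delta(T)$. But $v(T_k)$ can be $\Theta(n)$ under the hypotheses of the lemma: take a caterpillar rooted at $y$ whose spine has length $\sqrt n$ and whose spine vertices each carry $\Theta(\sqrt n)$ leaves — this has $\Delta\le\alpha\sqrt n$ and $\Theta(n)$ vertices, yet it can be joined to $x$ by a path of length exactly $7$. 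Your estimate ``$e(T)-\Theta(1)$ rounds for the main phase'' likewise presupposes that only a constant number of edges are being deferred, which is false in such examples. There is also a softer but real issue: you invoke the delicate degree- and free-vertex bookkeeping from Theorem 5.2 of~\cite{clemens2020fast} while simultaneously excising a set $S$ from the board. Whether the inductive invariants of that proof survive this surgery needs to be proved, not asserted, precisely because the whole point of that argument is to keep track of a $\Theta(\sqrt n)$-sized ``special structure'' (bare path or leaf matching) until the end; carving out a second reserved set is not obviously compatible with it.

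For comparison, the paper takes a different decomposition that sidesteps these size issues. It does not try to route the $r=1$ strategy around a reserved set. Instead it conditions on the shape of $P_{xy}$: if $P_{xy}$ is short or has many branching vertices (Cases 1 and 2), Waiter embeds $P_{xy}$ first as a Hamilton path between $v_x$ and $v_y$ on a dedicated vertex set via Lemma~\ref{lem:Ham.path.ends.fixed} (spending the one extra round there), and then extends via Lemma~\ref{lem:tree.extensions}, which supplies a bare path or leaf matching in $T\setminus E(P_{xy})$ to be embedded last. If $P_{xy}$ is long and mostly bare (Case 3), a bare subpath of $P_{xy}$ of length $\Theta(\sqrt n)$ itself serves as the structure saved for last; Waiter first embeds the two-component forest $T$ minus the interior of that subpath via Lemma~\ref{lem:doubleroot1deg}, then reconnects via Lemma~\ref{lem:Ham.path.ends.fixed}. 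In every case, what is deferred is of size $\Theta(\sqrt n)$ and is chosen so that the Theorem 5.2 machinery applies directly, without any ad hoc avoidance of a reserved region.
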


Finally, we can conclude the following theorem.

\begin{theorem}\label{thm:doubleroot3}
	There exists $\alpha>0$ and $n_\alpha$ such that 
	for every $n\ge n_\alpha$ the following holds. 
	Let $F$ be a forest on $n$ vertices, with 
	$\Delta(F)\le  \alpha \sqrt n$ and
	such that every component is non-trivial and 
	rooted or double-rooted.
	Suppose that $d\geq 0$ of the components are double-rooted trees
	and for every double-rooted component its roots are 
	distant by at least 7.
	Then Waiter can force a copy $\bar F$ of $F$ 
	in $K_n$ within at most $e(F)+d+1$ rounds,
	in such a way that all roots are mapped into vertices 
	fixed at the beginning. 
\end{theorem}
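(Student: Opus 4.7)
The plan is to split the analysis into two regimes based on the largest component size $a(F)$. Fix $\alpha := \alpha_0/(2\sqrt{2})$ where $\alpha_0$ is the constant from Lemma~\ref{lem:doubleroot2}, and take $n_\alpha$ large enough for all estimates below.

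In the small-component regime $a(F) < (e(F)-d)/3$, the hypotheses of Lemma~\ref{lem:doubleroot0} hold on the board $K_n$ (the distance-$7$ condition implies distance $\ge 4$), so Waiter wins in at most $e(F) + d + 1$ rounds with all roots placed at their fixed images. Otherwise some component $T^*$ of $F$ has at least $(e(F)-d)/3$ edges. Since every component has at least two vertices, $e(F) \ge n/2$; since every double-rooted component has at least eight vertices, $d \le n/8$. Together these give $v(T^*) \ge n/8 + 1$. If $T^* = F$, then Lemma~\ref{lem:doubleroot2} applied directly to $K_n$ produces $F$ in $e(F) + r$ rounds, and a case check ($r=1$, $d=0$ or $r=2$, $d=1$) shows $r = d+1$, as required.

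So suppose $F_{\text{rest}} := F - T^*$ is non-empty, and let $T^*$ have $r^* \in \{1,2\}$ roots with fixed images $U^* \subset V(K_n)$. In Phase~I, Waiter applies Lemma~\ref{lem:doubleroot1deg} to $F_{\text{rest}}$ on the sub-board $K_n - U^*$. The required slack inequality $|V(K_n) - U^*| \ge v(F_{\text{rest}}) + \Delta(F_{\text{rest}})$ reduces to $v(T^*) - r^* \ge \Delta(F_{\text{rest}})$, and this holds because $v(T^*) \ge n/8$ while $\Delta(F_{\text{rest}}) \le \alpha\sqrt n$. This phase produces a red copy $\bar F_{\text{rest}}$ in $e(F_{\text{rest}}) + d_{\text{rest}}$ rounds, where $d_{\text{rest}} = d - [T^* \text{ is double-rooted}]$; moreover by property~(b) of Lemma~\ref{lem:doubleroot1deg} every colored edge has an endpoint in $V(\bar F_{\text{rest}})$.

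In Phase~II, set $V_1 := V(K_n) \setminus V(\bar F_{\text{rest}})$, so that $|V_1| = v(T^*)$ and $V_1 \supseteq U^*$. The observation above guarantees that the sub-board $K[V_1]$ carries no colored edge at all, hence behaves as a fresh $K_{v(T^*)}$. By the choice of $\alpha$, $\Delta(T^*) \le \alpha\sqrt n \le \alpha_0\sqrt{n/8} \le \alpha_0\sqrt{v(T^*)}$ and $v(T^*) \ge n_{\alpha_0}$ for $n$ large, so Lemma~\ref{lem:doubleroot2} forces $T^*$ on $K[V_1]$ in $e(T^*) + r^*$ rounds with the images of $T^*$'s roots at the prescribed vertices in $U^*$. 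Summing the two phases gives $e(F) + d_{\text{rest}} + r^* = e(F) + d + 1$ in both sub-cases ($T^*$ rooted or double-rooted).

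The two potential obstacles are (i) guaranteeing enough slack for Phase~I, which is afforded precisely because one component $T^*$ of size $\Omega(n)$ exists in this regime, and (ii) ensuring the Phase~II board is free of colored edges, which is provided by property~(b) of Lemma~\ref{lem:doubleroot1deg}. The round accounting closes exactly because the ``$+1$'' of the Lemma~\ref{lem:doubleroot0} bound is sidestepped by using Lemma~\ref{lem:doubleroot1deg} for $F_{\text{rest}}$, while the drop $d \mapsto d_{\text{rest}}$ is compensated exactly by the $r^*$ rounds Lemma~\ref{lem:doubleroot2} spends on the roots of $T^*$.
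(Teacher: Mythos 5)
Your proof is correct and follows essentially the same route as the paper's: split on whether $a(F)<(e(F)-d)/3$, apply Lemma~\ref{lem:doubleroot0} in the small-component case, and otherwise peel off a large component $T^*$, embed $F-V(T^*)$ via Lemma~\ref{lem:doubleroot1deg}, then finish $T^*$ via Lemma~\ref{lem:doubleroot2} on the clean residual board, with the same round-count bookkeeping. The only cosmetic differences are that you treat the degenerate case $T^*=F$ explicitly (the paper absorbs it into the general argument with $F'=\emptyset$) and you take $\alpha=\alpha_0/(2\sqrt2)$ rather than the paper's more conservative $\alpha'/3$.
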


\begin{proof}
Let $\alpha'$ be the constant promised by Lemma~\ref{lem:doubleroot2},
and set $\alpha = \frac{\alpha'}{3}$. 
Whenever needed, assume that $n$ is large enough.
Note that $d\leq \frac{n}{8}$, since every double-rooted component 
is required to have at least 8 vertices. 
If every component of $F$ has less than $\frac{e(F)-d}{3}$ edges, 
then the assertion follows from Lemma \ref{lem:doubleroot0}.
Hence, assume from now on that there is a component $T$ in $F$ 
which has at least $\frac{e(F)-d}{3}\geq \frac{n}{8}$ edges 
and put $F'=F - V(T)$. 
Let $R\subseteq V(T)$ be the set of roots of $T$, and note that
$$
n-|R|\geq v(F')+ v(T)-2\ge v(F')+ \frac{n}{8}-2\ge v(F')+ \Delta(F')
$$
for $n$ big enough. Let $\bar R$ be the vertices in $K_n$ 
that the roots in $R$ need to be mapped to.
Lemma~\ref{lem:doubleroot1deg} implies that Waiter can force 
a copy $\bar{F'}$ of $F'$ in $K_n-{\bar R}$
such that the properties \ref{doubleroot1deg_roots} and \ref{doubleroot1deg_nonleaf} 
from this lemma hold. 
In particular, all roots of $F'$ are mapped to the vertices fixed at 
the beginning, and no edge in in $V(K_n)\setminus V(\bar{F'})$ is 
colored yet. Moreover, it takes her at most $e(F')+d'$ rounds, where 
$d'\in \{d-1,d\}$ is the number of double-rooted trees of $F'$.
Afterwards, Waiter plays on the board $K_n- V(\bar F')$.
In view of Lemma~\ref{lem:doubleroot2} she can force a copy of $T$, 
within at most $e(T)+|R|$ rounds such that the vertices of $R$ are 
mapped to the fixed vertices in $\bar R$.
For this, note that 
$\Delta(T)\leq \alpha \sqrt{n} < \alpha' \sqrt{v(T)}$.
In particular, Waiter forces a copy of $F$ as required within
$e(F')+d'+e(T)+|R|=e(F)+d+1$ rounds.
\end{proof}

\medskip

\section{Forcing spanning trees with linear maximum degree}
\label{sec:proof.linear.degree}

In this section we prove Theorem \ref{thm:giventree}.

Let $\eps\in \left(0,\frac{1}{3}\right)$ be given, 
set $\delta=\frac{\eps}{3}$,
let $\alpha$ be the constant from Theorem \ref{thm:doubleroot3},
and let $b=20\eps^{-1}\alpha^{-1}$.
Whenever needed assume that $n$ is large enough.
Let $T$ be a tree on $n$ vertices with $\Delta(T)\leq \left(\frac{1}{3}-\eps\right)n$. 
We set $D=\frac{\eps\alpha \sqrt n}{5}$, 
and we say that a vertex of $T$ is \emph{big} if its degree in $T$ 
is at least $D$. We denote the set of all big 
vertices by $\BigT$, and 
we let $T'$ be the smallest subtree of $T$ 
containing all vertices from $\BigT$. 
We say that a bare path in $T'$ is \emph{special}
if each of its inner vertices does not belong to $\BigT$.
Let $\mathcal{P}$ be the family of all maximal special bare paths of 
$T'$ whose length is at least $11$,
and let $\mathcal{P}'$ be the family of all the paths of $\mathcal{P}$ without 
their endpoints. Note that then the paths in $\mathcal{P}'$ are 
vertex-disjoint and have length at least $9$.
With $I'$ we denote the set of all inner vertices of the paths in $\mathcal{P}'$. 
Moreover, we set
$$
F_1:=
\begin{cases}
T',~ & \text{if }v(T')\leq \frac{\eps n}{4}, \\
T'-I',~ & \text{if }v(T')> \frac{\eps n}{4}.
\end{cases}
$$

In the following, we first give a brief overview of
Waiter's strategy for a game played on $B:=K_n=(V,E)$, and afterwards we provide more details on why Waiter can play as suggested and force a copy of $T$ 
as required.

\subsection*{Strategy description.} 

Waiter's strategy consists of three stages. Details of each stage will be given in the strategy discussion. 

\paragraph{\textbf{Stage I}}~\\
By an application of Lemma~\ref{lem:doubleroot1v},
Waiter forces a red copy $\bar{F_1}$ of $F_1$ 
within $e(F_1)$ rounds such that 
immediately afterwards the following holds:
\begin{enumerate}[label=(I.\arabic*)]
	\item\label{stage1endpoint}
        every colored edge has an endpoint in $V(\bar{F_1})$.
	\item\label{stage1bluestars} 
        all blue edges in 
		$E_{B}(V(\bar{F_1}),V\setminus V(\bar{F_1}))$ 
		form a star forest such that every star is centered at 
		a~vertex $u\in V(\bar{F_1})$ and has at most 
		$\deg_{\bar{F_1}}(u)$ edges.
\end{enumerate}
Next, let $f_1:V(F_1)\rightarrow V(\bar{F_1})$ 
denote the embedding of $F_1$ into the red graph.
Then Waiter proceeds with Stage II.

\paragraph{\textbf{Stage II}}~\\ 
Let $C_1,\ldots,C_s$ be the non-trivial components of
$F'_2:=T\setminus E(F_1)$. 
For each $i\in [s]$, let $R_i$ denote the set of vertices 
of $C_i$ that belong to $V(F_1)$, and call 
these vertices the roots of $C_i$.
Note that $1\leq |R_i|\leq 2$ for every $i\in [s]$.
Furthermore, $C_i$ has two roots $r_i,r_i'$ if and only if  
these vertices are connected in $C_i$ 
by a path from $\mathcal{P}'$.
Let $R:=\bigcup_{i\in [s]} R_i$, $B_2:=B[(V\setminus V(\bar{F_1}))\cup f_1(R)]$
and $t:=n-v(F_1)$.
We distinguish two cases.

\smallskip
\paragraph*{Case 1} 
If $F_1=T'$ and all components $C_i$ have size 
at most $\left(\frac{1}{3}-\delta\right)t$, then Waiter finishes the 
embedding of $T$ within $e(F'_2)+1$ further rounds, 
by an application of Theorem~\ref{thm:root1del} on the board $B_2$, and she skips
Stage~III.

\smallskip
\paragraph*{Case 2}
Otherwise, let $F_2\subset F'_2$ be the star forest consisting of all
edges incident with $R$ in $F'_2$. (This includes the first and last 
edge of each path in $\mathcal{P}'$.) Let us assume that the vertices in $R$ are the roots of $F_2$. 
Then, by an application of Theorem~\ref{thm:root1del},
playing on $B_2$
for $e(F_2)$ rounds, 
Waiter forces a red copy $\bar{F_2}$ of  $F_2$
such that $f_1(R)$ is the set of roots of $\bar{F_2}$ and the following holds.
\begin{enumerate}[label=(II.\arabic*)]
	\item\label{stage2roots} 
        For each $r\in R$, the star with center 
		$\bar r:=f_1(r)$ has size $\deg_{F_2}(r)$.
	\item\label{stage2colored} 
        All colored edges from Stage II intersect 
		$V(\bar{F_2})$.
	\item\label{stage2nonleaf} 
        Every vertex in 
		$V\setminus (V(\bar{F_1})\cup V(\bar{F_2}))$ 
		is incident with at most one colored edge
		and the other endpoint of such an edge is not a leaf of 
		$\bar{F_2}$.
\end{enumerate}
Afterwards, let
$f_2:V(F_1\cup F_2) \rightarrow V(\bar{F_1}\cup \bar{F_2})$ 
denote the obtained embedding (an extension of $f_1$) of $F_1\cup F_2$ into Client's graph.
Then, Waiter proceeds with Stage III.

\paragraph{\textbf{Stage III}}~\\  
When Waiter enters this stage,
Client's graph contains a copy of $F_1\cup F_2$.
Let $F_3:=T\setminus (E(F_1)\cup E(F_2))$.
Then, by an application of Theorem~\ref{thm:doubleroot3}, 
within at most $e(F_3)+\frac{4n}{D}+1$ rounds, Waiter finishes a~copy of $T$. 

\subsection*{Strategy discussion} 
We now explain, separately for each stage, why Waiter is able to play as described above.
The following properties of $\mathcal{P}'$, $I'$ and $F_1$ will be useful in the further analysis.

\begin{claim}\label{claim:F1}
For large enough $n$ we have the following.
 \begin{enumerate}[(a)]
 \item\label{F1paths}
 $|\mathcal{P}'|< \frac{4n}{D}$.
 \item\label{F1inner}
 If $v(T')> \frac{\eps n}{4}$, then $|I'|>v(T')-\frac{52n}{D}$.
 \item\label{F1v}
 $v(F_1)\leq \frac{\eps n}{4}$.
 \end{enumerate}
\end{claim}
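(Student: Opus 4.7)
The plan is to decompose $T'$ by a suppression argument and then count. I would introduce the set of ``non-suppressible'' vertices
\[
B' \;:=\; \{v \in V(T') : \deg_{T'}(v) \neq 2\} \;\cup\; (V(T') \cap \BigT),
\]
and observe that a vertex $v \in V(T')\setminus B'$ has $T'$-degree $2$ and is not big, hence is the inner vertex of some maximal special bare path, while the endpoints of every maximal special bare path lie in $B'$. Suppressing the vertices of $V(T')\setminus B'$ yields a tree $T''$ on vertex set $B'$ whose edges correspond bijectively to the maximal special bare paths of $T'$ (of every length). Writing $\mathcal{Q}$ for this family, we get $|\mathcal{Q}| = |B'|-1$ and $\mathcal{P} \subseteq \mathcal{Q}$.

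The next step is to bound $|B'|$ in terms of $|\BigT|$. Since $\sum_v \deg_T(v) = 2(n-1)$, we have $|\BigT| \le 2(n-1)/D < 2n/D$. Since $T'$ is the minimal subtree of $T$ containing $\BigT$, every leaf of $T'$ is big, so $|L(T')| \le |\BigT|$. The identity $\sum_v (2-\deg_{T'}(v)) = 2$ implies $|\{v : \deg_{T'}(v) \ge 3\}| \le |L(T')|-2$, and the degree-$2$ members of $B'$ are the big non-leaves of $T'$, of which there are at most $|\BigT| - |L(T')|$. Summing the three contributions,
\[
|B'| \;\le\; |L(T')| + (|\BigT|-|L(T')|) + (|L(T')|-2) \;\le\; 2|\BigT| - 2 \;<\; \tfrac{4n}{D}.
\]

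Part (a) is now immediate: $|\mathcal{P}'| = |\mathcal{P}| \le |\mathcal{Q}| = |B'|-1 < 4n/D$. For part (b), note that $V(T')\setminus I'$ equals $B'$ together with the inner vertices of paths in $\mathcal{Q}\setminus \mathcal{P}$; each such path has length at most $10$ and hence at most $9$ inner vertices, so
\[
v(T') - |I'| \;\le\; |B'| + 9\,|\mathcal{Q}\setminus\mathcal{P}| \;\le\; 10|B'| \;<\; \tfrac{40n}{D} \;<\; \tfrac{52n}{D}.
\]
For part (c), the case $v(T') \le \eps n/4$ is immediate since $F_1 = T'$; otherwise (b) gives $v(F_1) = v(T') - |I'| < 52n/D$, and since $D = \eps\alpha\sqrt n/5$ this is $O(\sqrt n / \eps)$, which is at most $\eps n/4$ for $n$ large enough.

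There is no substantial obstacle: the only care needed is the suppression bookkeeping yielding $|\mathcal{Q}|=|B'|-1$ and the three-way accounting for $|B'|$ using the fact that leaves of $T'$ are automatically big.
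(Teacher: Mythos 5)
Your decomposition is essentially the same as the paper's: the paper works with $\GoodT$, the degree-$2$ non-big vertices of $T'$, while you work with its complement $B'$, and both bound $|B'|$ (equivalently $v(T')-|\GoodT|$) by $|\BigT|+|L(T')|<4n/D$ using the degree-sum identity and the fact that every leaf of $T'$ is big. The suppression argument is a nice way to make $|\mathcal{Q}|=|B'|-1$ explicit, which the paper glosses over a bit; this cleanly yields part~(a).

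There is, however, a bookkeeping slip in part~(b). The set $V(T')\setminus I'$ is \emph{not} just $B'$ together with the inner vertices of the short paths in $\mathcal{Q}\setminus\mathcal{P}$. For each $P\in\mathcal{P}$, the two endpoints of the shortened path $P'\in\mathcal{P}'$ are inner vertices of $P$ but are not in $I'$ (since $I'$ consists only of the \emph{inner} vertices of the paths in $\mathcal{P}'$). So $V(T')\setminus I'$ also contains these $2|\mathcal{P}|$ vertices, and the correct estimate is
\[
v(T')-|I'|\;\le\; |B'| + 9\,|\mathcal{Q}\setminus\mathcal{P}| + 2|\mathcal{P}| \;\le\; |B'|+11|\mathcal{Q}| \;<\; 12|B'| \;<\; \tfrac{48n}{D} \;<\; \tfrac{52n}{D},
\]
so the conclusion survives, but your intermediate identity is false and the stated bound $10|B'|$ is obtained from it. You should also phrase the degree-$2$ count of $B'$ as ``at most the big non-leaves'' rather than ``are the big non-leaves,'' since a big vertex can have $T'$-degree $\ge 3$; the inequality you use is still valid. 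Finally, the step $|\{v:\deg_{T'}(v)\ge 3\}|\le|L(T')|-2$ tacitly assumes $T'$ has at least $2$ leaves, i.e.~$v(T')\ge 2$; the degenerate cases $v(T')\le 1$ are trivial for all three parts of the claim, but it is worth saying so.
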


\begin{proof}
Note that by the definition of $\BigT$ we have $|\BigT|< \frac{2n}{D}$. 
Clearly $\Delta(T')\le |L(T')|< \frac{2n}{D}$,
since every leaf of $T'$ is a big vertex of $T$. 
Define a set 
$\GoodT=\{v\in V(T')\setminus \BigT:~\deg_{T'}(v)=2\}$.
Next, observe that the number of vertices of $T'$ 
which belong to $\BigT$ or have degree at least 3 in $T'$ 
is less than $|\BigT|+|L(T')| < \frac{4n}{D}$. Hence,
$|\GoodT|> v(T') - \frac{4n}{D}$. 
Since every endpoint of a maximal special bare path is either 
a big vertex (including all leaves of $T'$) or 
it has degree more than 2 in $T'$, 
the number of maximal special bare paths in $T'$ is less than 
$\frac{4n}{D}$. Thus $|\mathcal{P}'|\leq |\mathcal{P}|< \frac{4n}{D}$,
and the first part of the claim follows.

For the second part, we assume that $v(T')>\frac{\eps n}{4}$ and 
we denote by $I$ the set of all inner vertices of the paths in $\mathcal{P}$.
We already verified that $|\mathcal{P}|<\frac{4n}{D}$, hence 
the number of inner vertices of maximal special bare 
paths of length at most 10 is less than $\frac{40n}{D}$. 
Therefore 
$$
|I'|=|I|-2|\mathcal{P}| 
	> |\GoodT|-\frac{40n}{D} - \frac{8n}{D} 
	> v(T')-\frac{52n}{D}.
$$
In the last inequality we used the fact $|\GoodT|> v(T') - \frac{4n}{D}$ proved above.

In order to prove the third part of the claim, we assume that 
$v(T')>\frac{\eps n}{4}$ and $F_1=T'-I'$, since otherwise $v(F_1)=v(T')\le \frac{\eps n}{4}$.
Then, based on (b), we infer that $v(F_1)= v(T') - |I'| < \frac{52n}{D} < \frac{\eps n}{4}$
for large enough $n$.
\end{proof}

\paragraph{\textbf{Stage I analysis}} 
In view of Claim~\ref{claim:F1}\ref{F1v}, we know 
that $v(F_1)+e(F_1)<\frac{\eps n}{2}< n$. Thus,
by Lemma~\ref{lem:doubleroot1v} (with $B:=K_n$),
Waiter can force a red copy of $F_1$ as described.

\paragraph{\textbf{Stage II analysis}} 
When Waiter enters Stage II, 
she has already forced a red copy $\bar{F_1}$ of $F_1$, 
and the roots of the non-trivial components $C_i$ of $F'_2$ have fixed images $f_1(R)$ on the board $B_2$. 

We start with the discussion of Case 1. 
For each component $C_i$ we have
$e(C_i)\leq \left(\frac{1}{3}-\delta \right)t$
and $|R_i|=1$, i.e.~there is exactly one root $r_i$, 
since $F_1=T'$. Now, we 
put $F_2=\bigcup_{i\in[s]}C_i$
and consider the game played on $B_2$, which is a complete graph 
on $t+s$ vertices, with some edges colored. Note that $t=e(F_2)$ and the properties 
\ref{stage1endpoint} and \ref{stage1bluestars} imply that the blue edges in $B_2$
form a~star forest with $f_1(R)$ being the set of centers of the stars.
Moreover, the number of these edges is bounded by the 
number of previous rounds, which is $e(F_1)\leq \frac{\eps n}{4}<\delta t$.
Thus, by Theorem~\ref{thm:root1del} with $q=0$, Waiter can force a red
copy $\bar{F_2}$ of $F_2$ in $B_2$ within $e(F_2)+1$ rounds and such that
all vertices in $R$ are mapped to their fixed images under $f_1$. 
Then, the union of $\bar{F_1}$ and $\bar{F_2}$ is a copy of $T$ 
obtained within a total of $e(F_1)+e(F_2)+1=e(T)+1$ rounds.

Next, we consider Case 2. First we will verify that 
\begin{equation}\label{bigboard}
v(B_2)-v(F_2)>\frac{\eps n}{5}.
\end{equation}

If $F_1\neq T'$, then $v(T')>\frac{\eps n}{4}$, so by Claim~\ref{claim:F1}\ref{F1v} we get
$$
|I'|>v(T')-\frac{52n}{D}>\frac{\eps n}{4}-\frac{52n}{D}>\frac{\eps n}{5}
$$ 
for large enough $n$. Thus, in this case (\ref{bigboard}) holds, since $v(B_2)-v(F_2)\ge |I'|$. 

If otherwise $F_1=T'$, but there is a component $C_i$ such that
$e(C_i)>\left(\frac{1}{3}-\delta\right)t$, then for the unique root 
$r_i$ of $C_i$ we have
$$
e(C_i)-\deg_{F_2}(r_i) 
	>\left(\frac{1}{3}-\delta\right)t-\left(\frac{1}{3}-\eps\right)n
    =\left(\frac{1}{3}-\delta\right)(n-v(F_1))-\left(\frac{1}{3}-\eps\right)n
	>\frac{\eps n}{5},
$$
where in the last inequality we put $\delta:=\frac{\eps}{3}$ and use Claim~\ref{claim:F1}\ref{F1v}. 
Since $v(B_2)-v(F_2)\ge e(C_i)-\deg_{F_2}(r_i)$, we obtain (\ref{bigboard}) again.

Observe that each of the components of the rooted star forest $F_2$ has size at most 
$\Delta(T)\le  \left(\frac{1}{3} - \eps\right)n < \left(\frac{1}{3} - \delta\right)t$,
and has only one root. 
Therefore, by the analogous argument as in Case 1, using Theorem~\ref{thm:root1del}, 
Waiter can force a red copy $\bar{F_2}$ of $F_2$
on $B_2$ such that all roots are mapped
to their fixed images under $f_1$.
As $v(B_2)>v(F_2)$, Waiter needs only $e(F_2)$ rounds of Stage II.
The properties \ref{stage2roots} -- \ref{stage2nonleaf} follow from
the properties \ref{root1del_roots} -- \ref{root1del_nonleaf} in Theorem~\ref{thm:root1del}.

\paragraph{\textbf{Stage III analysis}} 
It follows from (\ref{bigboard}) and the definitions of $F_3$ and $B_2$, 
that $e(F_3)>\frac{\eps n}{5}$.
Let $C_1',\ldots,C_{s'}'$ be the non-trivial components of $F_3$.
In view of the description of Stage I and Stage II, for every vertex $u\in V(F_1)$ we have
$\deg_{\bar F_1\cup \bar F_2}(f_2(u))=\deg_T(u)$, so there are no vertices of $F_1$ in 
$\bigcup_{i\in[s']}C_i'$.

For each $i\in [s']$ let $R_i'$ denote the set of vertices 
of $C_i'$ that belong to $V(F_2)$, and call 
these vertices the roots of $C_i'$.
Note that $1\leq |R_i'|\leq 2$ for every $i\in [s]$ and,
if $C_i'$ has two roots, then 
these vertices are connected in $C_i'$ 
by a special bare path of length at least $7$ 
(i.e.~a path from $\mathcal{P}'$ minus its endpoints).
Let $R':=\bigcup_{i\in [s]} R_i'$.
Since all big vertices were embedded during Stage I,
and since $e(F_3)>\frac{\eps n}{5}$,
we know that every vertex in $F_3$ has degree at most 
$D=\frac{\eps\alpha \sqrt n}{5}<\alpha \sqrt{v(F_3)}$.
Let $d$ be the number of double-rooted components of $F_3$.
Then $d\leq \frac{4n}{D}$, 
since the number of paths in ${\mathcal P}'$ 
is at most $\frac{4n}{D}$, as we know from Claim~\ref{claim:F1}\ref{F1paths}. 

Consider the board  
$B_3:=B[(V\setminus (V(\bar{F_1}\cup \bar{F_2})) \cup f_2(R')]$.
Because of \ref{stage2nonleaf}, the vertices in $R'$ 
are leaves of $F_2$, so by \ref{stage2colored} we know that all edges in $E(B_3)$ 
are still free at the beginning of Stage III 
(except, perhaps, irrelevant edges between vertices in $f_2(R')$). 
Therefore we can apply Theorem \ref{thm:doubleroot3}, 
provided $n$ is big enough.
Hence Waiter can force a red copy $\bar{F_3}$ of $F_3$ on $B_3$
within at most $e(F_3)+d+1$ rounds such that each
vertex in $R'$ is mapped to its image under $f_2$.

Summing up, the union of $\bar{F_1}$, $\bar{F_2}$ and $\bar{F_3}$
is a copy of $T$, and Waiter played at most
$$
e(F_1)+e(F_2)+e(F_3)+d+1\le n+\frac{4n}{D} \leq n + b\sqrt n
$$ 
rounds. 
\hfill $\Box$


\section{Avoiding trees with linear maximum degree}
\label{sec:avoiding}

In the following we prove Theorem~\ref{thm:upperbound.wc}. Indeed the statement immediately follows from the result below, by considering e.g.
a spanning tree consisting of two large stars connected by a bare path.

\begin{theorem}
Let $n$ be a large enough integer.
In a Waiter-Client game on $K_n$, Client has a strategy
to avoid two disjoint red stars of size at least $0.499n$ each.
\end{theorem}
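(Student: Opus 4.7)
The plan is to exhibit a randomised strategy for Client and show that with positive probability the resulting red graph contains no two vertex-disjoint stars of size $k := \lceil(1/2-c)n\rceil$. I would first reformulate: disjoint red $k$-stars at $(u,v)$ exist iff $\deg_r(u), \deg_r(v) \ge k$ and $|N_r(u)\cup N_r(v)| \ge 2k$; equivalently, $|N_r(u)\cap N_r(v)| \le \deg_r(u)+\deg_r(v)-2k$, which (since $\deg_r \le n-1$) is at most about $2cn$. Since in $G(n,1/2)$ every pair has common neighbourhood of size $\sim n/4 \gg 2cn$, the task is to make Client's red graph ``sufficiently random''.

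The first attempt is the \emph{uniform random} strategy: each round Client flips a fair coin. A standard negative-association argument gives $\Pr[S \subseteq R] \le 2^{-|S|}$ for every fixed edge-set $S$. For a fixed pair $(u,v)$, the random variable $|N_r(u)\cap N_r(v)| = \sum_{w\ne u,v} \mathbf 1[uw, vw \in R]$ is a sum of negatively-correlated indicators with $\Exp[|N_r(u)\cap N_r(v)|] = (n-2-t(u,v))/4$, where $t(u,v)$ counts the $w$'s for which $\{uw,vw\}$ is a Waiter-pair (an ``attack'' at $w$ on $(u,v)$). A Chernoff-type bound (the variant in Lemma~\ref{lemma:Chernoff_variant}) together with a union bound over $\binom{n}{2}$ pairs handles every pair that is not too heavily attacked, say $t(u,v) \le (1-\gamma)n$ for a small constant $\gamma$ depending on $c$: for such pairs $|N_r(u)\cap N_r(v)|$ is a.a.s.~far above $2cn$, so no disjoint $k$-stars can sit at $(u,v)$.

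The main obstacle is a few pairs that are \emph{heavily attacked}. I would use the combinatorial observation that heavily attacking $(u,v)$ consumes essentially all of the $n-1$ edges incident to each of $u, v$ in triangle pairs, so no other pair through $u$ or through $v$ can also be heavily attacked; hence the heavily-attacked pairs form a matching on $V$, of size at most $n/2$. To defend against these I would augment Client's strategy with an extra random ingredient, e.g.\ i.i.d.\ labels $h(v) \in [0,1]$ sampled at the start, and, whenever Waiter offers a triangle pair $\{uw,vw\}$, Client picks the edge incident to $\arg\min\{h(u),h(v)\}$ red. Then for any heavily-attacked pair $(u,v)$ the higher-labelled vertex is starved of red edges through the attack and ends up with $\deg_r < k$, eliminating it as a potential star centre. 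The main technical difficulty --- and where I expect the bulk of the work to lie --- is in verifying that this defensive modification does not destroy the $G(n,1/2)$-like concentration for the remaining pairs, and in particular that the $\arg\min$ ``defenders'', although possibly of high red degree, share enough red neighbours (because the attacks simultaneously feed them red edges through overlapping $w$'s) to ensure that $|N_r(u)\cup N_r(v)|<2k$ for every pair of them as well. With careful accounting of these cross-correlations, a union bound over all $O(n^2)$ pairs combined with the concentration estimates should yield a positive (indeed $1-o(1)$) probability of success.
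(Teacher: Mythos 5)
There is a genuine gap, and it is in the second (and crucial) half of your plan: the $\arg\min$ modification to Client's cherry response is \emph{fatally exploitable}, precisely because it is deterministic once the labels $h(\cdot)$ are fixed.

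Here is a concrete Waiter strategy that beats your modified Client with probability $1$ over Client's randomness. Waiter spends a negligible $O(n)$ rounds probing: in each cherry $\{aw,bw\}$ Client's choice reveals whether $h(a)<h(b)$, so by running a knockout tournament Waiter identifies the two vertices $u,v$ with the \emph{lowest} labels, touching each vertex only $O(\log n)$ times. Now fix disjoint sets $W_u, W_v$, each of size about $(n-2)/2$. For each $w\in W_u$, Waiter offers the cherry $\{uw,xw\}$ where $x$ is any third vertex; since $h(u)$ is the global minimum, Client \emph{always} colours $uw$ red. Likewise for each $w\in W_v$ she offers $\{vw,x'w\}$ with $x'\neq u$, and Client always colours $vw$ red. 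After roughly $n$ rounds, $u$ has about $n/2>0.499n$ red neighbours in $W_u$, $v$ has about $n/2$ red neighbours in $W_v$, and the neighbourhoods are disjoint, so the two forbidden stars appear with certainty. Note that this attack never ``heavily attacks'' any \emph{pair}: $t(u,v)=0$, and $t(u,x)\le 1$ for every other $x$. Your matching observation about heavily-attacked pairs, while correct, simply does not address this attack, and your hope that the low-labelled ``defenders'' would be forced to share red neighbours fails because Waiter can keep the cherry centres $W_u$ and $W_v$ disjoint by design. So the concern you flag at the end --- that the $\arg\min$ rule might destroy the $G(n,1/2)$-like concentration --- is not a technicality but exactly where the argument collapses: once a vertex is known to be low-labelled, edges incident to it in cherries become red with probability $1$, not $1/2$, and an adaptive Waiter can create that situation at will.

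For comparison, the paper's strategy keeps genuine \emph{per-round} randomness even when Waiter knows everything about Client's rule: a fixed (public) vertex ordering is used, and on a cherry Client gives the red edge to the lower-indexed endpoint with probability $p=0.4$ and to the higher-indexed one with probability $0.6$. So even if Waiter targets a single vertex $z$ with cherries $\{zw,xw\}$ in which $z$ is favoured, $z$ receives red with probability only $0.6$ per cherry, which is not enough to fill two disjoint $0.499n$-stars. The paper then avoids a case analysis on ``attacked'' versus ``non-attacked'' pairs altogether: fixing a pair $(x,y)$, it classifies Waiter's moves into four types, introduces auxiliary vertex sets $V_1,V_2,V_3$, proves the deterministic inequality $T_1+4T_2\ge n-2$ with $T_1=|V_1|$ and $T_2=|V_3|$, and shows via Chernoff (Lemma~\ref{lemma:Chernoff_variant}) that the two necessary conditions $\deg_W(x)<(1-c)n$ and $|N_W(x)\cap N_W(y)|<(1-2c)n$ cannot both hold unless $T_1+4T_2<0.89n$, a contradiction. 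The biased coin plus this accounting identity is what takes the constant below $1/2$; replacing the coin by a deterministic tie-break rule, even a randomly seeded one, gives that advantage back to Waiter.
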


\begin{proof}
We will describe a randomized strategy for Client
and we will show that, against any strategy of Waiter,
this strategy asymptotically almost surely
prevents Client from occupying two disjoint stars of size $0.499n$.
Note that this is enough to prove the statement above,
as it implies that Waiter does not have a strategy 
to always force such stars, and hence 
Client has a deterministic strategy as desired.

We begin by ordering the vertices $\{v_1, v_2, \dots, v_n\}$ of the board $K_n$ arbitrarily, 
say, by their indices. 
In each round Waiter offers two free edges. For $p=0{.}4$, Client's strategy is as follows:
\begin{itemize}
    \item If Waiter offers a \emph{cherry} $v_iv_kv_j$, that is edges $v_iv_k$ and $v_jv_k$ with $i < j$, then Client colors red the edge $v_iv_k$ with probability $p$, and the other edge -- with probability $q=1-p$.
    \item If Waiter offers a pair of disjoint edges, that is edges $v_iv_j$ and $v_kv_{\ell}$ with $\{i,j\}\cap\{k,\ell\}=\emptyset$, then Client colors red any of them with equal probability.
\end{itemize}

Notice that since $p < 1/2$, in the first case Client prefers ``elder'' vertices, and intuitively larger red stars should appear at vertices with higher indices.
Our task is to show that asymptotically almost surely at the end of the game there are no two disjoint red stars of size at least $cn$ each, where we set $c:=0{.}499$. 

In order to analyze large disjoint stars in $C$, 
we will focus first on two vertices, say $x$ and $y$. Without loss of generality we may assume that $x < y$ in the ordering considered at the beginning of this proof. 
Observe that in order to get two disjoint stars in $C$ of size at least $cn$ each, and centered at $x$ and $y$, we need to have
\begin{align}\label{eq:2_stars_condition}
    \begin{cases}
        \deg_C(x) \geq cn, \\
        \deg_C(y) \geq cn, \\
        \deg_C(x) + \deg_C(y) - |N_C(x) \cap N_C(y)| \geq 2cn.
    \end{cases}
\end{align}

Note that at the end of the game we have 
$$
\deg_C(x) + \deg_C(y) - |N_C(x) \cap N_C(y)| + |N_W(x) \cap N_W(y)| = n-2,
$$ 
and for any vertex $v$ we have $\deg_C(v)+\deg_W(v)=n-1$. Hence, 
a condition necessary for (\ref{eq:2_stars_condition}) is
\begin{align}\label{eq:waiters_graph_condition}
    \begin{cases}
        \deg_W(x) < (1-c)n, \\
        |N_W(x) \cap N_W(y)| < (1-2c)n.
    \end{cases}
\end{align}
at the end of the game.

We will show that (\ref{eq:waiters_graph_condition}) holds with probability $o(n^{-2})$, so that later a union bound over all pairs $x,y$ will finish the argument. 
To this end we will introduce three random processes (by defining certain sets $V_1,V_2,V_3$), partially describing the course of the game, which enable us to show that a.a.s.~the blue degree $\deg_W(x)$ or the common blue neighborhood $N_W(x) \cap N_W(y)$ is larger than what (\ref{eq:waiters_graph_condition}) indicates. Before we proceed, we introduce a couple of additional definitions. During the game, a vertex $v\notin\{x,y\}$ is called:
\begin{itemize}
    \item \emph{inactive}, if both edges $xv, yv$ are free;
    \item \emph{active}, if exactly one of the edges $xv, yv$ has been colored so far.
\end{itemize} 

In order to analyze Client's randomized strategy,
we consider vertex sets $V_1,V_2,V_3$
while the game is proceeding, where initially all these sets are empty.
We update these sets after each move, depending on the following four types of moves performed by Waiter. 

\begin{itemize}
    \item[(1)] Suppose that in some round Waiter offers
    a cherry $xvy$ (and hence $v$ was inactive before this round), then we put $v$
    into $V_1$.
    
    \noindent    
    For the $i$-th move of this type, let $X_i$ denote a random variable which indicates whether Client has colored the edge $yv$ red or blue, that is
    \begin{align*}
        X_i = \begin{cases}
            1, & yv \text{ is red } (xv \text{ is blue});\\
            0, & yv \text{ is blue } (xv \text{ is red}).
        \end{cases}
    \end{align*}
    Note that the random variables $X_i$ are independent and $\Prob(X_i=1) = q$ for every $i=1,2,\dots$. Let $T_1=|V_1|$ at the end of the game. Then $T_1\leq n-2$ and $T_1$ is a random variable which may depend on Client's moves, that is $T_1$ may depend on random variables $X_1, X_2, \dots$. Furthermore, and at the end of the game 
    \begin{equation}
        \deg_W(x) \geq \sum_{i=1}^{T_1} X_i.
    \end{equation}

    \item[(2)] Suppose that in some round Waiter offers two edges which do not form a cherry as in (1), and such that one of them is an edge between $\{x,y\}$ and some inactive vertex $v$, and the other is some edge $uw$. 
We then add $v$ to $V_2$ with the following restrictions.    
If Waiter offers two edges between $\{x,y\}$ and distinct vertices $z,z'$, where both $z$ and $z'$ are inactive vertices, then we let $v$ be any of these vertices, say $z$, and we let $uw$ be the other edge, say incident with $z'$. 
In this case we add $v=z$ to $V_2$, but we do not add $z'$ to $V_2$.
Moreover, if Waiter offers two edges between $\{x,y\}$ and distinct vertices $z,z'$, such that exactly one of $z$ and $z'$ is inactive, and the other is an active vertex in $V_2$, we treat it as a move of type 3 described below, i.e.~we do not add any vertex to $V_2$. 
 
\noindent   
Having $v$ fixed for a move of type 2, we let $Y_v$ denote a random variable which indicates whether Client has colored the other edge $uw$ red or not, that is
    \begin{align*} 
        Y_v = \begin{cases}
            1, & uw  \text{ is red};\\
            0, & uw \text{ is blue}.
        \end{cases} 
    \end{align*}
    This time the random variables $(Y_v)_{v\in V_2}$ do not have to be independent, but we know that for every $v\in V_2$ we have
    \begin{align*}
        \Prob(Y_v=1) \geq p.
    \end{align*}

    \item[(3)] Suppose that in some round, Waiter offers two edges 
	neither of type 1 nor of type 2, and such that one of them is an edge 
	between $\{x,y\}$ and some active vertex $v\in V_2$, 
	and the other is some edge $uw$. 
	Then we add $v$ to $V_3$ with the following restriction.    
	If Waiter offers two edges between $\{x,y\}$ and distinct vertices $z,z'$, where 
	both $z$ and $z'$ are active vertices in $V_2$, we then let $v$ be any 
	of these vertices, say $z$, and we let $uw$ be the other edge, say 
	incident 
	with $z'$. In this case we add $v=z$ to $V_3$, but we do not add $z'$ to 
	$V_3$.
    
    \noindent
    Having $v$ fixed for a move of type 3, we let $Y'_v$ denote a random variable which indicates whether Client has colored the other edge $uw$ red or not, that is
    \begin{align*} 
        Y'_v = \begin{cases}
            1, & uw  \text{ is red};\\
            0, & uw \text{ is blue}.
        \end{cases}
    \end{align*}
    Again, the random variables $(Y'_v)_{v
    \in V_3}$ do not have to be independent, but for every $v\in V_3$ we have
    \begin{align*}
        \Prob(Y'_v=1) \geq p .
    \end{align*}
    Moreover, the event $\{Y_v=1,Y'_v=1\}$ implies that $xv, yv$ are blue or, equivalently, $v\in N_W(x)\cap N_W(y)$. 
    
	\item[(4)] Suppose that Waiter makes a move which is not of type 1, 2 or 3, then we do not update any  of the sets $V_1,V_2,V_3$.    
    
\end{itemize}

We can now move on to the final position analysis. Let $T_2 = |V_3|$ at the end of the game. We claim that 
\begin{align}\label{eq:T1+4T2}
    T_1 + 4T_2 \geq n-2.
\end{align}
Indeed, among $n-2-|T_1|$ vertices $v$ which have not been offered in a move of type 1, at most half were ignored in moves of type 2 and not included in $V_2$ (in case were Waiter offered two edges between $\{x,y\}$ and distinct vertices $z,z'$ where both $z$ and $z'$ where inactive) and at most half of the remaining ones were ignored in moves of type 3 and not included in $V_3$ (in case where Waiter offered two edges between $\{x,y\}$ 
and distinct vertices $z,z'$ where both $z$ and $z'$ were active vertices of $V_2$, and also in case where one of them was active and the other not).

Next, we define the following coupling. For $v\in V_3$, let $D_v$ be a random variable such that
\begin{align*}
    \{Y_v Y'_v = 0\} \quad \Longrightarrow \quad \{D_v = 0\},
\end{align*}
and with probability distribution given by
\begin{align*}
    D_v = \begin{cases}
        1, & \text{with probability } p^2,\\
        0, & \text{with probability } 1-p^2.
    \end{cases}
\end{align*} 
Hence
\begin{align}
    |N_W(x)\cap N_W(y)| \geq \sum_{v\in V_3} Y_vY'_{v} \geq \sum_{v\in V_3} D_v,
\end{align}
the random variables $(D_v)_{v\in V_3}$ are independent, and they have the same probability distribution. Our task now is to estimate the probability that the events in (\ref{eq:waiters_graph_condition}) hold simultaneously. By a~slight abuse of notation, i.e.~changing the indexing of random variables $(D_v)_{v\in V_3}$ to $(D_j)_{j=1}^{T_2}$, we can bound this probability from above by
\begin{align}\label{eq:prob_of_2_large_stars}
    \Prob\left(\deg_W(x)\leq (1-c)n,\, |N_W(x)\cap N_W(y)| \leq (1-2c)n\right) & \leq \Prob\left(\sum_{i=1}^{T_1} X_i\leq (1-c)n,\, \sum_{j=1}^{T_2} D_j \leq (1-2c)n\right). 
\end{align}
Let $\calE_1$ and $\calE_2$ denote the events $\sum_{i=1}^{T_1} X_i\leq (1-c)n$ and $\sum_{j=1}^{T_2} D_j \leq (1-2c)n$, respectively. 

Next, let us define the events
\begin{align*}
    \calF_1: \sum_{i=1}^{T_1} X_i \geq T_1 q - 2\sqrt{n \log n} \quad \text{ and } \quad
    \calF_2: \sum_{j=1}^{T_2} D_j \geq T_2 p^2 - 2\sqrt{n \log n}.
\end{align*}
Suppose that the events $\calE_1, \calE_2$ and $\calF_1, \calF_2$ occur simultaneously. Then we get that
\begin{align*}
    T_1 q - 2\sqrt{n \log n} \leq (1-c)n \quad \text{ and } \quad T_2 p^2 - 2\sqrt{n \log n} \leq (1-2c)n,
\end{align*}
which in turn implies that for some positive constant $\delta<0{.}0001$ and $n$ sufficiently large we have
\begin{align*}
    T_1 + 4T_2 & \leq \frac{(1-c+\delta)n}{q} + \frac{4(1-2c+\delta)n}{p^2}.
\end{align*}
Recall that $p=0{.}4$, $q=0.{6}$ and $c=0{.}499$. Hence we get
\begin{align*}
    T_1 + 4T_2 & \leq \frac{(1-c+\delta)n}{q} + \frac{4(1-2c+\delta)n}{p^2} < 0{.}89 n.
\end{align*}
This contradicts (\ref{eq:T1+4T2}). Therefore, if the events $\calE_1$ and $\calE_2$ hold, then one of the events $\calF_1, \calF_2$ cannot hold. By Lemma~\ref{lemma:Chernoff_variant}, this in turn happens with probability $o(n^{-2})$. Hence, the probability in (\ref{eq:prob_of_2_large_stars}) is bounded from above by $o(n^{-2})$. Finally, taking the union bound over all possible pairs $x, y$, we get that the probability that Client's graph contains two disjoint stars of size at least $cn$ each is $o(1)$.
\end{proof}

\medskip


\section{Client-Waiter spanning tree game}\label{sec:cw_trees}

\subsection{Auxiliary games}
\label{sec:strategy.auxiliary}
The main goal of this subsection is to prove that Waiter can force 
Client to build a graph in which every small vertex set has a large
common neighborhood. We start with the following lemma.

\begin{lemma}\label{lemma:games.on.hypergraph}
	Let $n$ be large enough.
	Let $\mathcal{H} = (X,\mathcal{F})$ be a hypergraph
	with $|X|=n$, $|\mathcal{F}|\leq n^{\log(n)}$ and such that
	every hyperedge contains at least $\log^3(n)$ vertices.
	Then in the Waiter-Client game on $\mathcal{H}$,
	Waiter has a strategy such that at the end of the game, 
	the set $C$ of Client's elements satisfies
	$$
	|C\cap f| \geq \frac{|f|}{100} \quad \text{for every }f\in \mathcal{F} .
	$$
\end{lemma}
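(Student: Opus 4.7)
The natural approach is to reduce this ``force many elements in each hyperedge'' goal to the ``force at least one element'' setting covered by Theorem~\ref{thm:WC_transversal}, via a standard subset trick. For each $f\in \cF$, set $m(f):=|f|-\lceil |f|/100\rceil+1$, and consider the auxiliary hypergraph $\cH'=(X,\cF')$ with
$$\cF':=\{f'\subseteq f\,:\, f\in \cF,\ |f'|=m(f)\}.$$
The key observation is that if at the end of the game some $f\in\cF$ satisfies $|C\cap f|<|f|/100$, then $|W\cap f|\geq m(f)$, so the set $W$ of Waiter's elements contains some $f'\in \cF'$, i.e. $C\cap f'=\emptyset$. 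Hence it suffices to exhibit a Waiter strategy in the Waiter-Client game on $\cH'$ that forces Client to claim at least one element of every $f'\in \cF'$.

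By Theorem~\ref{thm:WC_transversal}, such a strategy exists as soon as $\sum_{f'\in \cF'}2^{-|f'|+1}<1$. To bound this sum, I would fix $f\in\cF$ with $s:=|f|$ and count the $m(f)$-subsets of $f$: there are $\binom{s}{\lceil s/100\rceil-1}$ of them, and by the standard binary-entropy estimate this is at most $2^{s\cdot H(1/100)}$ with $H(1/100)<0.082$. Each such $f'$ contributes $2^{-|f'|+1}\leq 2\cdot 2^{-0.99 s}$. Thus the contribution of $f$ to the sum is at most $2\cdot 2^{-\alpha s}$ with $\alpha:=0.99-H(1/100)>0.9$. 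Using $s\geq \log^3 n$ and $|\cF|\leq n^{\log n}=2^{O(\log^2 n)}$, summing over $\cF$ yields
$$\sum_{f'\in \cF'}2^{-|f'|+1}\leq 2|\cF|\cdot 2^{-\alpha\log^3 n}\leq 2\cdot 2^{O(\log^2 n)-0.9\log^3 n}=o(1),$$
which is certainly less than $1$ for $n$ large enough.

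There is no real obstacle here: the only thing to verify carefully is the constant bookkeeping, namely that the binary entropy $H(1/100)$ is comfortably smaller than $1-1/100$ so that the per-hyperedge contribution decays as $2^{-\Theta(|f|)}=2^{-\Theta(\log^3 n)}$, which safely beats the $|\cF|\leq 2^{O(\log^2 n)}$ factor coming from the union over $\cF$. The assumption $|f|\geq \log^3 n$ is exactly what is needed to make this work, and the $1/100$ fraction is, of course, flexible.
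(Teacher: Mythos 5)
Your proposal is correct and takes essentially the same approach as the paper: reduce to a transversal game on the family of all large subsets of each hyperedge, apply the Waiter--Client Erd\H{o}s--Selfridge criterion (Theorem~\ref{thm:WC_transversal}), and bound the resulting sum by a binomial estimate (you use the binary-entropy bound $\binom{s}{\lceil s/100\rceil-1}\le 2^{sH(1/100)}$, the paper uses $\binom{k}{0.01k}\le(100e)^{0.01k}$; both give the same $2^{-\Theta(|f|)}$ decay). The minor bookkeeping differences (your $m(f)=|f|-\lceil|f|/100\rceil+1$ versus the paper's $|f|-\lfloor|f|/100\rfloor$) are immaterial, and your check that the per-hyperedge decay $2^{-\Theta(\log^3 n)}$ dominates the $|\cF|\le 2^{O(\log^2 n)}$ factor is exactly the point of the hypothesis $|f|\ge\log^3 n$.
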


\begin{proof}
Consider the family
$$
\mathcal{F}' :=
	\left\{ f\setminus A:~ f\in \mathcal{F},~ A\subseteq f,~ 
		|A|=\left\lfloor \frac{|f|}{100} \right\rfloor \right\}.
$$
If Client claims an element in each set of $\mathcal{F}'$, 
then we are done. To verify that Waiter can force this, 
we only need to check that the condition
from Theorem~\ref{thm:WC_transversal} holds:
\begin{align*}
\sum_{F\in \mathcal{F}'} 2^{-|F|}
	& \leq \sum_{k\in \mathbb{N} \atop k\geq \log^3(n)} 
		\sum_{f\in \mathcal{F}: \atop |f|=k}
		\sum_{A\subset f: \atop |A|
			=\left\lfloor \frac{|f|}{100} \right\rfloor}
		2^{-0.99k} 
		\leq \sum_{k\in \mathbb{N} \atop k\geq \log^3(n)} 
		\sum_{f\in \mathcal{F}: \atop |f|=k}
		\binom{k}{0.01k}
		2^{-0.99k} \\
		& \leq \sum_{k\in \mathbb{N} \atop k\geq \log^3(n)} 
		\sum_{f\in \mathcal{F}: \atop |f|=k}
		(100e)^{0.01k}
		2^{-0.99k} 
		\leq \sum_{k\in \mathbb{N} \atop k\geq \log^3(n)} 
		\sum_{f\in \mathcal{F}: \atop |f|=k}
		e^{-0.5k} \\
		& \leq \sum_{k\in \mathbb{N} \atop k\geq \log^3(n)} 
		\sum_{f\in \mathcal{F}: \atop |f|=k}
		e^{-0.5\log^3(n)} 
		\leq n^{\log(n)}
		e^{-0.5\log^3(n)}  = o(1) . \qedhere
	\end{align*}
\end{proof}

In the following lemma we consider a Waiter-Client game played on a graph $G$, i.e.~the players select edges from $E(G)$. We denote by $N_G[A]$ the set of common neighbors of vertices in $A\subseteq V(G)$ in a graph $G$; more precisely, $N_G[A]:= \left(\bigcap_{v\in A} N_G(v)\right)\setminus A$.

\begin{lemma}\label{lem:large.common.degree}
	Let $\beta\in (0,1)$. Then for every large enough integer $n$ 
	and every $t\in\mathbb{N}$ such that $t\leq 0.1\log_2(n)$ 
	the following holds. 
	Suppose $G$ is a graph on $n$ vertices and 
	for every set $A$ of $t$ vertices 
	we have a set $Y_A\subset N_G[A]$ of at least $\beta n$ common neighbors.
	Then in the Waiter-Client game on $G$, Waiter has a strategy such that at the end of the game, Client's graph $C$ 
	satisfies the following:
	$$
	|N_C[A]\cap Y_A| \geq \frac{\beta n}{200^{t+1}} \quad 
		\text{ for every }A\subset V(G)~ \text{ such that }|A|=t.
	$$
\end{lemma}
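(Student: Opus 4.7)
The plan is to argue by induction on $t$, with Lemma~\ref{lemma:games.on.hypergraph} serving as the main engine at each level.

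The base case $t=0$ is trivial: adopting the natural convention $N_C[\emptyset] = V(G)$, one has $|N_C[\emptyset] \cap Y_\emptyset| = |Y_\emptyset| \geq \beta n \geq \beta n / 200$. For the inductive step, I would pass from $t-1$ to $t$ by applying Lemma~\ref{lemma:games.on.hypergraph} to a single, enriched hyperedge family $\mathcal{F}$. The family $\mathcal{F}$ will contain two types of hyperedges: first, for every $t$-subset $A$ and every $a \in A$, the star hyperedge $f_{A,a} := \{av : v \in Y_A\}$, which has size at least $\beta n \geq \log^3 n$; and second, the hyperedges produced by invoking the inductive hypothesis on the auxiliary level-$(t-1)$ family $\{Z_{B,A}\}$ defined by $Z_{B,A} := Y_A$ for each pair with $B \subsetneq A$ and $|B| = t-1$. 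Using $t \leq 0.1 \log_2 n$ one checks that the total family satisfies $|\mathcal{F}| \leq n^{\log n}$, so Lemma~\ref{lemma:games.on.hypergraph} applies.

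After the application, Waiter's strategy simultaneously forces $|N_C(a) \cap Y_A| \geq \beta n/100$ for every pair $(A,a)$ and, by the inductive hypothesis, $|N_C[B] \cap Y_A| \geq \beta n/200^{t}$ for every $(B,A)$ with $B \subsetneq A$ and $|B|=t-1$. Writing $A = B \cup \{a\}$, the desired bound is obtained by iteratively intersecting with $N_C(a_i)$ along a chain $\emptyset \subsetneq \{a_1\} \subsetneq \ldots \subsetneq A$, losing at most a factor of $1/100$ at each step; this yields $|N_C[A] \cap Y_A| \geq \beta n/100^t$, comfortably beating the target $\beta n / 200^{t+1}$ (the factor-$2$ slack per level is exactly what accommodates the gap $200 = 2 \cdot 100$).

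The main obstacle is that Lemma~\ref{lemma:games.on.hypergraph} only controls $|N_C(a) \cap S|$ when $S$ is declared in advance as a hyperedge, whereas the intermediate intersection $W = N_C[B] \cap Y_A$ arising in the chain is itself a game-dependent object. Overcoming this requires a careful strengthening of the inductive hypothesis, in which the family of ``target'' sets is allowed to be indexed by auxiliary labels so that every subset of $Y_A$ showing up in the iterative intersection is already tracked by some hyperedge in $\mathcal{F}$, while still keeping $|\mathcal{F}|$ within the $n^{\log n}$ budget of Lemma~\ref{lemma:games.on.hypergraph}.
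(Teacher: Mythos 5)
Your induction on $t$ breaks on the issue you yourself identify in the last paragraph, and that obstacle is not a technicality one can engineer around by tagging hyperedges with auxiliary labels: Lemma~\ref{lemma:games.on.hypergraph} requires the hypergraph $\mathcal{F}$ to be fixed \emph{before} the game begins, whereas the intermediate set $W = N_C[B]\cap Y_A$ that you need to intersect with $N_C(a)$ is only determined once play is over. Any attempt to pre-enumerate all sets that $W$ could possibly become (indexed by auxiliary labels or otherwise) blows up exponentially, far beyond the $n^{\log n}$ budget, because $W$ ranges over subsets of a set of size $\Theta(n)$ determined by adversarial and random choices. A further structural problem is that a single application of Lemma~\ref{lemma:games.on.hypergraph} only bounds the number of Client edges inside each \emph{fixed} hyperedge; it says nothing about the joint structure $N_C(a_1)\cap\dots\cap N_C(a_t)$ — the star hyperedges $f_{A,a}$ you propose each bound $|N_C(a)\cap Y_A|$ in isolation, and those sets could in principle be nearly disjoint.

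The paper circumvents this by abandoning the ``one-shot'' application of Lemma~\ref{lemma:games.on.hypergraph}. It first fixes an orientation of $E(G)$ so that each $t$-set $A$ retains $|N^+[A]\cap Y_A|\geq (1-o(1))(1/2)^t\beta n$ common outneighbors, and fixes an ordering $v_1,\dots,v_n$. Waiter then plays in $n$ stages, offering in stage $i$ only the edges oriented out of $v_i$. The crucial point is that by the time stage $i$ starts, the sets $N_{i-1}^+[A]$ (the survivors after all earlier stages) are already fully determined, so they \emph{can} legitimately serve as hyperedges for a fresh application of Lemma~\ref{lemma:games.on.hypergraph} to the mini-game on the star at $v_i$. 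The paper's induction is over the stage index $i$, proving $|N_i^+[A]|\geq |N_0^+[A]|/100^{|A\cap\{v_1,\dots,v_i\}|}$. In effect the paper converts your unavailable ``intersect along a chain'' step into a sequence of admissible applications, with the orientation ensuring each edge is played in exactly one stage. Your writeup correctly intuits that such a strengthening is needed, but as written the proof has a genuine gap at precisely that point.
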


\begin{proof}
Before the start of the game fix an orientation of the edges of $G$ 
such that 
$$
|N^+[A]\cap Y_A| \geq (1-o(1)) \left( \frac{1}{2} \right)^{t} \beta n
	\geq (\beta-o(1)) n^{0.9}
$$
holds for every $A\subset V(K_n)$ of size $|A|=t\leq 0.1\log_2(n)$, 
where $N^+[A]$ denotes the set of all vertices
which are common outneighbors of the vertices in $A$. 
The existence of such an orientation can be proven with 
a simple probabilistic argument.
Moreover, 
fix any ordering of the vertices $v_1,\ldots, v_n$.
	
Waiter now plays the game in $n$ stages, where in the $i^{\text{th}}$ 
stage, she offers all the outgoing edges at the vertex $v_i$ in a suitable way.
Set $N_0^+[A]:=N^+[A]\cap Y_A$ for every $A\subseteq V$ of size $t$. 
After Stage $i$, let
$$
N_i^+[A] :=
\begin{cases}
	N_{i-1}^+[A], & ~ \text{ if }v_i\notin A \\
	\{v\in N_{i-1}^+[A]:~ v_iv~ \text{is claimed by Client 
		in Stage $i$}\}, & ~ \text{ if }v_i\in A \, .
\end{cases}
$$
We claim that Waiter can play in such a way that
for every $i\in[n]\cup \{0\}$, and after Stage $i$ we have
\begin{equation}\label{eq:inductive.bound}
		|N_i^+[A]| \geq \frac{|N_0^+[A]|}{100^k} \quad 
			\text{ if } ~ |A\cap \{v_1,\ldots,v_i\}|=k
\end{equation}
for every $A\subseteq V$ of size $t$. To prove this, we apply induction.
	
The induction start ($i=0$) is trivial.
So, let $i\geq 1$ and assume that \eqref{eq:inductive.bound} 
is correct after Stage $i-1$ for all sets $A$ of size $t$. 
Note that then for every $A$ of size $t$ we have
$$
|N_{i-1}^+[A]| \geq \frac{|N_0^+[A]|}{100^{|A|}} 
	\geq \frac{(\beta-o(1)) n^{0.9}}{100^{0.1\log_2(n)}}> n^{0.1}.
$$
For Stage $i$ notice the following. 
Claiming edges between $v_{i}$ and $N^+(v_{i})$ can be represented 
by claiming vertices in $N^+(v_{i})$;
and then claiming a certain amount of elements in some set 
$N_{i-1}^+[A]$ represents claiming the same number of edges between 
$v_{i}$ and $N_{i-1}^+[A]$. We thus can apply the strategy from 
Lemma~\ref{lemma:games.on.hypergraph} with the hyperedges being the 
sets $N_{i-1}^+[A]$ for which $v_{i}\in A$ holds. 
We claim that Property~\eqref{eq:inductive.bound} for $i$ follows from 
the lemma. 
Indeed, assume that $|A\cap \{v_1,\ldots,v_{i-1}\}|=k$.
If $v_{i}\notin A$, then by definition of $N_{i}^+[A]$ and induction, 
we have $|N_{i}^+[A]|=|N_{i-1}^+[A]|\geq \frac{|N_0^+[A]|}{100^k}$ 
while $k=|A\cap \{v_1,\ldots,v_{i-1},v_{i}\}|$. 
If otherwise $v_{i}\in A$, then 
$|A\cap \{v_1,\ldots,v_{i-1},v_{i}\}|=k+1$, and
by induction and because of Lemma~\ref{lemma:games.on.hypergraph}, 
we have
$|N_{i}^+[A]| \geq \frac{|N_{i-1}^+[A]|}{100}
	\geq  \frac{|N_0^+[A]|}{100^{k+1}}$.
Hence, the induction is complete.
	
	\smallskip
	
To finish the argument, note that by the end of the game we have
$$
|N_n^+[A]| \geq \frac{|N_0^+[A]|}{100^{|A|}} 
	\geq \frac{(1-o(1)) \left( \frac{1}{2} \right)^{t} \beta n}{100^t}
	\geq\frac{\beta n}{200^{t+1}} 
$$
for every $A$ of size $t$,
and observe that the vertices in $N_n^+[A]\subseteq Y_A$ 
belong to the common neighborhood of $A$ in Client's graph 
$C$ at the end of the game.
\end{proof}

\subsection{Proof of Theorem~\ref{thm:upperbound.cw}}

In the following we prove
Theorem~\ref{thm:upperbound.cw} with $c:=20$.
Given any large enough $n\in\mathbb{N}$,
consider a tree $T$ on $n$ vertices with a vertex $v$ of degree 
$\deg_T(v)=t:=\lfloor 0.1\log_2(n) \rfloor$
such that each vertex of $T$ has a neighbor in
$N_T(v)$, and such that each vertex in $N_T(v)$ 
has roughly the same degree, i.e.~about 
$\frac{n}{0.1\log_2(n)}<\frac{cn}{\log(n)}$.
By switching the roles of the colors blue and red,
and by applying Lemma~\ref{lem:large.common.degree}
(with $\beta=\frac{1}{2}$ and $N_A:=V\setminus A$ 
for every set $A$ of size $t$),
Waiter can make sure that the final graph of blue edges
satisfies that every vertex set of size $t$
has a common neighbor. But then, in the red complement
there cannot be a copy of $T$. \hfill $\Box$


\section{Concluding remarks}\label{sec:concluding}


We studied the asymptotics of the the function $D(n)$, where $D(n)$ is 
the largest integer $t$ such that for every tree $T$ with $v(T)=n$ and $\Delta(T)\le t$ Waiter has a winning strategy in $\WC(n,T)$. Our linear lower and upper bounds on $D(n)$ differ by a multiplicative constant. It would be interesting to now if any ``proper'' constant exists.

\begin{conjecture}
There exists a constant $c$ such that $D(n)=cn+o(n)$. 
\end{conjecture}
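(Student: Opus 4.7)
The plan is to try to pin down the limit constant $c$ by sharpening both bounds on $D(n)$ until they meet, and then to extract convergence. The first step is to understand the extremal obstructions more precisely. On the upper bound side, I would revisit the proof of Theorem~\ref{thm:upperbound.wc}: Client's randomized strategy with bias $p=0.4$ aims at avoiding two disjoint red stars of size $cn$, and the authors explicitly leave optimization aside. Optimizing the choice of $p$ and allowing richer target trees (for instance, three-armed ``tripods'' centered at a high-degree vertex, or a path-connected cluster of several medium stars) should already push the upper bound strictly below $(1/2-0.001)n$. More ambitiously, I would analyze a one-parameter family of target trees and compute, for each, the best biased-coloring obstruction, looking for the $p$ and the structure that minimize the threshold simultaneously.

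On the lower bound side, Waiter's strategy in Theorem~\ref{thm:giventree} is bottlenecked by the suitability condition $a(F)<e(F)/3$ used to force rooted forest-tuples via Theorem~\ref{thm:root1del}. To push past $n/3$, I would try to redesign the skeleton decomposition: rather than fixing the big-vertex subtree $F_1$ first and then handling the star forest $F_2$ via Theorem~\ref{thm:root1del}, one could interleave the embedding of big vertices with the growth of the residual components, giving Waiter the option to ``redistribute'' edges among several big vertices in a single shrink operation. Crucially, in the regime where a single component $C_i$ of $T\setminus E(F_1)$ has size larger than $(\tfrac13-\delta)t$ the current proof essentially gives up and reduces to a star-forest embedding; a sharper strategy should build such a $C_i$ from both of its leaves simultaneously (a two-sided analogue of Theorem~\ref{thm:root1del} in the style of Lemma~\ref{lem:doubleroot0}), which one can hope raises the threshold toward the conjectured common value.

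The main obstacle, however, is that even if we improved the bounds numerically, proving that $D(n)/n$ actually converges requires more than matching bounds for individual $n$; it requires uniform control as $n\to\infty$. The natural Fekete-type route would be a subadditive inequality of the form $D(n_1+n_2)\geq g(D(n_1),D(n_2))$, obtained by decomposing the board $K_{n_1+n_2}$ into two cliques and running independent Waiter-Client strategies on each, so that a spanning tree of $K_{n_1+n_2}$ built from smaller building blocks could be forced. The rules of the game, though, force Waiter to offer pairs in each round, which she cannot in general confine to one of the cliques without wasting moves, so the independence one would want is not free. A plausible workaround is to couple the two sub-games and absorb the cross-edges into a~lower-order error term using probabilistic intuition of the type behind Lemma~\ref{lemma:Chernoff_variant}, but turning this into a~rigorous argument seems to be the genuine heart of the conjecture, and I expect it to be the main difficulty.
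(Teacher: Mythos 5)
The statement you were asked about is an open \emph{conjecture} in the paper's concluding remarks; the paper offers no proof of it, and neither does your write-up. You lay out a program (sharpen the upper and lower bounds, then try a Fekete-type subadditivity argument to extract a limit), and you yourself concede that ``turning this into a rigorous argument seems to be the genuine heart of the conjecture, and I expect it to be the main difficulty.'' That concession is the gap: nothing in the proposal actually establishes that $D(n)/n$ converges, so the conjecture remains unproved.

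Beyond the honest admission, let me flag one further obstruction to the subadditivity route that your discussion does not raise. The quantity $D(n)$ is defined as the largest $t$ such that Waiter wins $\WC(n,T)$ for \emph{every} tree $T$ with $\Delta(T)\le t$; it is a worst-case quantity over a family of targets. To obtain an inequality of the form $D(n_1+n_2)\ge g(D(n_1),D(n_2))$ you would need, for every tree $T$ on $n_1+n_2$ vertices with $\Delta(T)$ at most the relevant bound, a decomposition into pieces of the prescribed sizes $n_1,n_2$ on which the degree constraints are inherited, and then a way to run the two sub-games on vertex-disjoint cliques while controlling cross-edges. But a tree near the extremal degree $\approx n/2$ can have almost all of its edges incident to one or two vertices, so there is in general no split of $T$ into subtrees of the desired sizes with correspondingly reduced maximum degree; the degree budget does not distribute across the two sub-boards in the way Fekete would need. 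So the difficulty is not only the game-theoretic coupling you identify (Waiter cannot restrict her offers to one clique for free), but already the combinatorial step of decomposing the worst-case target. Identifying the right functional form of a recursive inequality compatible with a worst-case definition is, in my view, as serious an obstacle as the cross-edge coupling, and neither is resolved in what you have written.
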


Going further, we could ask about the above constant $c$, however we do not dare to guess. Theorems~\ref{thm:giventree} and \ref{thm:upperbound.wc} imply that $1/3\le c<1/2$. 

\subsection{Maximum of maximum degrees of spanning trees}
A related problem is to determine $\bar D(n)$, where $\bar D(n)$ denotes 
the largest $t$ with the property that there exists a tree $T$ with $v(T)=n$ and $\Delta(T)=t$ such that Waiter has a winning strategy in $\WC(n,T)$. 
A result of Beck~\cite{beck2008combinatorial} on discrepancy games mentioned in the introduction implies that 
$\bar D(n)\le n/2+O(\sqrt{n\log n})$. On the other hand, one can show that $\bar D(n)\ge n/2+\eps \sqrt{n\log n}$ for a positive constant $\eps$ and big enough $n$ in the following way.

The above mentioned theorem of Beck is much stronger than we state it. In fact the second term order $\sqrt{n\log n}$ is optimal. More precisely, there exists a positive constant $\eps'$ such that for every big enough $t$ Waiter, playing on $K_{t,t}$, can force Client to build a red star of size $t/2+\eps' \sqrt{t\log t}$ (\cite{beck2008combinatorial}, Theorem 18.4). Let us apply it to a spanning tree game on $K_n$. We begin by partitioning $V(K_n)$ into two roughly equal sets $A,B$. First Waiter offers the edges $E(A,B)$ of the complete bipartite graph so that she forces creating a red star of size $n/4+\eps \sqrt{n\log n}$. We can assume that this star has the centre $u\in A$. 
Then Waiter forces greedily a red star of size $\lfloor (|A|-1)/2\rfloor$ with its centre $u$, by selecting edges from $E(u,A)$. In the next stage, she applies Lemma~\ref{lem:Ham.path.ends.fixed} to force a red path on the vertex set $A\setminus\{u\}$. Finally, again by Lemma~\ref{lem:Ham.path.ends.fixed}, she can force a red path on the vertex set $B$. Thereby Waiter forces Client to build a spanning connected graph in $K_n$, with the maximum degree at least $n/2+\eps \sqrt{n\log n}$. 

Summarizing, $\bar D(n)$ does not differ much from the size of the greatest red star Waiter can force in $K_n$. We conclude that there exist positive constants $\eps,\eps'$ such that for every sufficiently big $n$ we have
$$\frac n2+\eps \sqrt{n\log n}\leq \bar D(n)\leq \frac n2+\eps' \sqrt{n\log n}.$$

\subsection{Maximum degrees of spanning trees in Client-Waiter games}
We can define functions analogous to $D(n)$ and $\bar D(n)$, for Client-Waiter games. 
More precisely, let $D_{CW}(n)$ be the largest integer $t$ such that for every tree $T$ with $v(T)=n$ and $\Delta(T)\le t$ Client can build a red copy of $T$ in $K_n$. By $\bar D_{CW}(n)$ we denote 
the largest maximum degree of a spanning tree Client can build in $K_n$. Theorem~\ref{thm:upperbound.cw} implies that $D_{CW}(n)=O(n/\log n)$. The key idea behind this estimation is the fact that Waiter can play so that for every vertex set $A$ of roughly $\log n$ vertices there is a vertex connected with blue edges with all elements of $A$. This idea cannot be applied for sets $A$ of order greater than $\log n$, what can be verified for example by the analogue of the Erd\H{o}s-Selfridge Criterion for Client-Waiter games (where Client plays the role of Breaker). We believe that the order $n/\log n$ of our upper bound for $D_{CW}(n)$, supported also by the probabilistic intuition, is optimal. 

\begin{conjecture}
There exists a constant $\eps>0$ such that 
$D_{CW}(n)\ge \eps \frac{n}{\log n}$ for every sufficiently large~$n$. 
\end{conjecture}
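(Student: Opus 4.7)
My plan is for Client to play the entire Client-Waiter game aiming to force her red graph $G$ to be a Friedman--Pippenger-type expander; the target tree $T$ is then embedded into $G$ as an offline post-game observation, so Client wins. The expansion target is: $|N_G(A) \setminus A| \ge (\Delta(T)+1)|A|$ for every $A \subseteq V(K_n)$ with $|A| \le c\,n/(\Delta(T)+2)$, where $c<1$ is a small constant. A graph with this property contains every tree of max degree at most $\Delta(T)$ on a correspondingly bounded number of vertices; to promote this to a \emph{spanning} embedding of $T$, I would additionally have Client build a small absorbing structure inside a designated vertex set $U$ of size $O(n/\log n)$ at the start of the game, into which any leftover leaves of $T$ can be routed.

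The main tool for forcing the expansion is a Client-Waiter analogue of Theorem~\ref{thm:WC_transversal} (with Client in the role of the forcing player), applied in the spirit of Lemma~\ref{lemma:games.on.hypergraph}. For each pair $(A, B)$ with $|A|=a \le cn/(\Delta+2)$ and $|B|=(\Delta+1)a - 1$, the edge set $E(A, V(K_n) \setminus (A\cup B))$ is a ``bad'' hyperedge that Client must intersect: if she succeeds for every such pair, then $N_G(A) \not\subseteq A \cup B$, which forces the desired expansion. The weight sum $\sum_{a} \binom{n}{a}\binom{n}{(\Delta+1)a}\, 2^{-a(n - (\Delta+2)a)}$ is bounded, using $\binom{n}{k} \le n^k$ and the restriction on $a$, by $\sum_{a \ge 1} 2^{a(\Delta+2)\log_2 n - \Omega(an)}$, which is $o(1)$ whenever $(\Delta + 2) \log_2 n = o(n)$ -- that is, whenever $\Delta \le \eps n/\log n$ for a sufficiently small constant $\eps$. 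This matches the conjecture's threshold exactly.

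The main obstacle I anticipate is the construction of the absorber by a Client-Waiter strategy. The expansion property above is a first-moment Erd\H{o}s--Selfridge statement and is relatively insensitive to Waiter's ordering of offers, but absorbers require precise local gadgets at many vertices simultaneously, and it is not obvious how to guarantee them when Waiter controls the offered pairs. A secondary difficulty is that the large-degree vertices of $T$ -- of which there may be as many as $\Omega(\log n)$ -- constrain expansion in exactly the sets most likely to contain their candidate images, so a pre-processing step that pre-embeds these vertices onto a fixed collection of well-separated anchors, in the spirit of Stage~I of the proof of Theorem~\ref{thm:giventree}, may be needed before the Erd\H{o}s--Selfridge step is even applicable.
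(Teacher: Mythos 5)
The statement you have been asked to prove is labelled a \emph{conjecture} in the paper, and the authors offer no proof of it; the problem is open, so there is nothing to compare your proposal against. That said, your sketch is a sensible line of attack, and your weight calculation does recover the conjectured $n/\log n$ threshold, which is encouraging. Two remarks on where it stands.

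First, the transversal criterion you invoke, Theorem~\ref{thm:WC_transversal}, is stated for Waiter-Client games, where Waiter forces Client onto a transversal. What you actually need is the Client-Waiter analogue: that Client, as the picker, can guarantee herself at least one element from each hyperedge. Such a criterion does hold (a potential-function argument with
$\Phi=\sum_{F:\,C\cap F=\emptyset}2^{-|F\cap\mathrm{Free}|}$
shows that Client can keep $\Phi$ non-increasing, so $\sum_{F}2^{-|F|}<1$ suffices; one also uses that in a Client-Waiter game the final unpaired element goes to Client), but it is a different statement from the one the paper supplies and must be cited or proved separately. With it in hand, your expansion argument for subsets of size at most $cn/(\Delta+2)$ looks correct.

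Second, and this is the real gap: the expansion you force yields embeddings of bounded-degree trees on $(1-\delta)n$ vertices, the Friedman--Pippenger/Haxell regime, not spanning embeddings. You acknowledge this and propose an absorber, but you also correctly observe that constructing local absorber gadgets by a Client-Waiter strategy is not something current technology supports: Waiter controls which pairs are offered, and the Erd\H{o}s--Selfridge mechanism is far too coarse to place specific small subgraphs at prescribed locations. Pre-embedding the $\Omega(\log n)$ large-degree vertices of $T$ before the expansion step is a closely related obstruction, since those anchors are precisely the vertices at which Waiter can most profitably concentrate her blue edges. These are not deferrable technicalities; they are exactly why the conjecture remains open. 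If you can carry out the absorber step against an adversarial Waiter, you will have resolved it.
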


As for $\bar D_{CW}(n)$, we have an upper bound $\bar D_{CW}(n)\le n/2+O(\sqrt{n})$, due to Beck's results on discrepancy games in the Client-Waiter version (where Waiter plays int the role of Balancer; see Theorem 18.5(a) in \cite{beck2008combinatorial}). 
We do not have any interesting lower bound on $\bar D_{CW}(n)$. We wonder if it differs much from the size of the greatest red star Client can build in $K_n$.

\subsection{Star forest games}
Finally, let us comment on star forest games on $K_n$. In Section~\ref{sec:rooted.forests}
we proved that if $F$ is a forest with $m\le n$ edges such that every its component has less than $m/3$ edges, then Waiter can force a red copy of $F$ in~$K_n$. In Section~\ref{sec:avoiding}
we proved that if $F$ is a forest consisting of two stars of size, roughly, $0.499 n$ each, then Client can avoid building a red copy of $F$ in $K_n$. It would be interesting to study the relation between the degree sequence of stars in a star forest $F$ and the outcome of Waiter-Client or Client-Waiter games such that a player tries to obtain a red copy of $F$ in $K_n$. Lemma~\ref{lem:starind} is a step in this research direction.  


\bibliographystyle{amsplain}
\bibliography{references}

\bigskip

\appendix
\section{Forcing trees with at most two roots and bounded maximum degree}

In this section we show how to modify the proof of 
Theorem 5.2 in~\cite{clemens2020fast} in order to obtain
Lemma~\ref{lem:doubleroot2}. For this, we first state a useful lemma
from~\cite{clemens2020fast}, which is a variant of Lemma 2.1 in~\cite{krivelevich2010embedding}, and a simple corollary of it.

\begin{lemma}[Lemma 5.1~in~\cite{clemens2020fast}]
\label{lem:leaves.and.paths.clemens}
	For every $\mu'\in(0,\frac{1}{2})$ there exists 
	$\alpha>0$ such that the following holds
	for every large enough integer $n$.
	If $T$ is a tree on $n$ vertices such that every vertex 
	is adjacent with at most $\alpha\sqrt{n}$ leaves in $T$,
	then $T$ contains a leaf matching of size  at least 
	$\mu'\sqrt{n}$ or a bare path of length at least $\mu'\sqrt{n}$.
\end{lemma}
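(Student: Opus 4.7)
The plan is to adapt the proof of Theorem~5.2 in~\cite{clemens2020fast}, which establishes the $r=1$ case, to handle the extra book-keeping required by $r=2$. The proof there proceeds by strong induction on $n$, using Lemma~\ref{lem:leaves.and.paths.clemens} at each step to find either a large leaf matching or a long bare path in $T$, peeling it off, recursing on a smaller tree, and closing up via Lemma~\ref{lem:perfect_matching} or Lemma~\ref{lem:Ham.path.ends.fixed}. I would reuse this skeleton verbatim, modifying only the bookkeeping around the roots.

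Pick $\mu'$ (say $\mu'=1/4$) and let $\alpha$ be the constant from Lemma~\ref{lem:leaves.and.paths.clemens}. In the inductive step, apply Lemma~\ref{lem:leaves.and.paths.clemens} to $T$ to obtain either a leaf matching $M$ with $|M|\ge \mu'\sqrt n$ or a bare path $P$ with $e(P)\ge \mu'\sqrt n$. In the leaf-matching case, the leaves of $M$ are automatically distinct from the (at most two) roots, so $T^\ast:=T-L(M)$ is a tree on $n-|M|$ vertices which still contains both roots at mutual distance $\ge 7$, and has $\Delta(T^\ast)\le\alpha\sqrt n\le \alpha'\sqrt{v(T^\ast)}$ for a slightly larger constant $\alpha'$. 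Recursively embed $T^\ast$ on a sub-board containing the fixed root images; then, on a $K^-_{|M|,|M|}$ subgraph between the images of the matched non-leaf endpoints and the remaining free vertices of $K_n$, apply Lemma~\ref{lem:perfect_matching} to complete the embedding. In the bare-path case, since $e(P)\ge \mu'\sqrt n\gg 14$, we may truncate $P$ by at most a constant number of edges to guarantee that neither root lies in its interior; letting $T^\ast$ be $T$ with this interior removed, $T^\ast$ is still doubly-rooted with distance $\ge 7$, and an application of Lemma~\ref{lem:Ham.path.ends.fixed} on the sub-board of the removed vertices and the images of the path-endpoints closes up the red copy.

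The key place where $r=2$ differs from $r=1$ is the bare-path step: one must be able to choose the bare path disjoint from the prescribed roots. This is precisely where the distance-$7$ hypothesis is used, to leave a constant-sized ``buffer'' around each root so that truncating $P$ costs only $O(1)$ edges while keeping $e(P)\ge \Theta(\sqrt n)$. The leaf-matching case requires no structural modification, since leaves are by definition never roots.

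The main obstacle is the round-count bookkeeping. Each invocation of Lemma~\ref{lem:perfect_matching} or Lemma~\ref{lem:Ham.path.ends.fixed} introduces one ``wasted'' round compared to the number of edges it forces. In \cite{clemens2020fast}, the $r=1$ argument is arranged so that a single such wasted round is charged to the $+1$ in the budget $e(T)+1$, and only the base case of the recursion pays this cost. For $r=2$, the budget is $e(T)+2$, providing one additional round of slack; this slack covers the extra overhead of having to fix two images rather than one (for instance, it is exactly the overhead of running Lemma~\ref{lem:Ham.path.ends.fixed} with two fixed endpoints instead of one at the base of the recursion). Verifying that the induction closes with this exact $+r$ slack, together with checking that $\Delta(T^\ast)\le\alpha'\sqrt{v(T^\ast)}$ is preserved at every recursion step, is the routine but essential calculation.
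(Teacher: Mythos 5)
You have proved the wrong statement. The lemma in question, Lemma~\ref{lem:leaves.and.paths.clemens}, is a purely \emph{structural} fact about trees with no game-theoretic content: it asserts that a tree $T$ on $n$ vertices, in which every vertex is adjacent to at most $\alpha\sqrt{n}$ leaves, must contain either a leaf matching of size at least $\mu'\sqrt{n}$ or a bare path of length at least $\mu'\sqrt{n}$. What you have written instead is a sketch of a proof of Lemma~\ref{lem:doubleroot2}, the statement about Waiter forcing a spanning tree with $r\in\{1,2\}$ fixed root images within $e(T)+r$ rounds. In fact your argument explicitly \emph{invokes} Lemma~\ref{lem:leaves.and.paths.clemens} as a black box in its inductive step, which makes clear you are not attempting to establish it.

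Two further points. First, the paper itself does not prove Lemma~\ref{lem:leaves.and.paths.clemens}; it imports it verbatim from~\cite{clemens2020fast}, where it is a variant of Lemma~2.1 of~\cite{krivelevich2010embedding}, so there is no internal proof of the statement for you to reproduce. Second, if you did want to prove it, the argument would be a counting argument on the tree: a tree with few leaves has few vertices of degree $\ge 3$, hence few maximal bare paths, so if none of them is long the vertex count is contradicted; conversely, if there are many leaves, the bound on leaves per vertex forces many leaves to have distinct neighbors, which yields a large leaf matching. None of the machinery you deployed (strong induction on $n$, Lemma~\ref{lem:perfect_matching}, Lemma~\ref{lem:Ham.path.ends.fixed}, round budgets, root images, distance-$7$ buffers) is relevant to that.
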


\begin{corollary}
\label{cor:leaves.and.paths.forest}
	Let $\mu\in (0,\frac{1}{10})$.
	Then there exists a constant $\alpha>0$
	such that the following holds for every large enough 
	integer $n$. If $F$ is a forest without isolated vertices
	such that $v(F)\geq (1-\mu)n$ and
	$\Delta(F)\leq \alpha\sqrt{n}$ hold,
	then $F$ has a leaf matching of size at least $\mu\sqrt{n}$
	or a bare path of length at least $\mu\sqrt{n}$.
\end{corollary}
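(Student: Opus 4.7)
The plan is to reduce to Lemma~\ref{lem:leaves.and.paths.clemens} by merging the components of $F$ into a single tree via an auxiliary vertex. First, I would fix $\mu':=3\mu\in(0,1/2)$, let $\alpha'$ be the constant that Lemma~\ref{lem:leaves.and.paths.clemens} produces for this $\mu'$, and set $\alpha:=\alpha'/2$. Write $T_1,\ldots,T_k$ for the components of $F$; each has at least two vertices since $F$ has no isolated vertices. The case $k=1$ is immediate: $F$ itself is a tree on at least $(1-\mu)n\ge 0.9n$ vertices whose maximum leaf-degree is at most $\alpha\sqrt n\le\alpha'\sqrt{v(F)}$, and Lemma~\ref{lem:leaves.and.paths.clemens} directly produces the desired object of size $\ge 3\mu\sqrt{v(F)}\ge\mu\sqrt n$.

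For $k\ge 2$, I would form a tree $T$ from $F$ by adding a new vertex $w$ and one edge $\{w,v_i\}$ per component, where $v_i\in V(T_i)$ is chosen to be a non-leaf of $T_i$ whenever $v(T_i)\ge 3$ (otherwise $T_i$ is a single edge and $v_i$ is arbitrary). Then every $v_i$ has degree at least $2$ in $T$, so $w$ has no leaf neighbors, and the leaves of $T$ are exactly those leaves of $F$ that are distinct from every $v_i$. Consequently the maximum leaf-degree in $T$ is at most $\Delta(F)\le\alpha\sqrt n\le\alpha'\sqrt{v(T)}$, since $v(T)=v(F)+1\ge 0.9n$. Lemma~\ref{lem:leaves.and.paths.clemens} applied to $T$ then yields either a leaf matching or a bare path in $T$ of size at least $3\mu\sqrt{v(T)}\ge 2.8\mu\sqrt n$.

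It remains to transport this object back to $F$. A leaf matching in $T$ consists of $F$-edges only, since no added edge $\{v_i,w\}$ has a $T$-leaf as endpoint, and its leaf endpoints are themselves leaves of $F$; so it is already a leaf matching in $F$ of the required size. A bare path $P$ in $T$ splits into two subcases: for $k\ge 3$ the vertex $w$ has degree $\ge 3$ and so cannot be an internal vertex of $P$, which means that $P$ contains a bare path in some $T_i$ of length at least $|P|-1\ge\mu\sqrt n$; for $k=2$, the worst case is that $w$ is internal to $P$, in which case $P$ decomposes as a bare path in $T_1$ ending at $v_1$, the two edges $\{v_1,w\},\{w,v_2\}$, and a bare path in $T_2$ ending at $v_2$, and the longer of the two $T_i$-sub-paths is a bare path in $F$ of length at least $(|P|-2)/2\ge 1.4\mu\sqrt n-1\ge\mu\sqrt n$ for $n$ large.

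The main (minor) obstacle is exactly this last subcase: when $k=2$ a bare path in $T$ can use $w$ as an internal vertex and lose roughly half of its length upon restriction to $F$. Taking $\mu'=3\mu$, rather than anything closer to $\mu$, precisely offsets this loss and makes all of the numerical bounds above work uniformly.
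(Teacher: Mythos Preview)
Your proof is correct and follows essentially the same approach as the paper: add an auxiliary vertex $w$ joined to one vertex per component, apply Lemma~\ref{lem:leaves.and.paths.clemens} to the resulting tree, and pull the leaf matching or bare path back to $F$. Your version is in fact more careful than the paper's in two respects: you halve the constant ($\alpha=\alpha'/2$) so that $\Delta(F)\le\alpha\sqrt{n}\le\alpha'\sqrt{v(T)}$ genuinely holds even though $v(T)$ may be slightly below $n$, and you choose the attachment vertices $v_i$ to be non-leaves when possible, which cleanly forces $w$ to have no leaf neighbours and simplifies the transport-back step. The paper instead takes $\mu'=4\mu$, attaches $v$ arbitrarily, and dispatches the transport-back in one sentence; your more explicit $k\ge3$ versus $k=2$ case split and the halving argument for $k=2$ are sound (and in fact, with your choice of $v_i$, the ``$w$ internal'' sub-case can only occur when both components are single edges, so it is vacuous for large $n$---but your bound covers it regardless).
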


\begin{proof}
Given $\mu$, we set $\mu'=4\mu$
and apply Lemma~\ref{lem:leaves.and.paths.clemens}
to obtain a constant $\alpha$ as output.
Whenever needed, we assume that $n$ is 
large enough.

Let a forest $F$ be given as described by the 
statement of the lemma. Construct a tree $T$ from $F$
by adding one new vertex $v$ which is made adjacent 
to exactly one vertex of each component of $F$.
Note that then no leaf of $T$ is a neighbor of $v$.
Hence, by Lemma~\ref{lem:leaves.and.paths.clemens}
and since $\Delta(F)\leq \alpha\sqrt{n}$,
$T$ has a leaf matching of size $\mu'\sqrt{v(T)}$
or a bare path of length at least $\mu'\sqrt{v(T)}$.
Note that
$$
\mu'\sqrt{v(T)}
\geq 
4\mu\sqrt{(1-\mu)n}
\geq 
2\mu\sqrt{n}.
$$
Hence, after removing $v$,
there must be 
a leaf matching of size
at least $\mu\sqrt{n}$
or a bare path of length at least
$\mu\sqrt{n}$.
\end{proof}

Let $T$ be a tree as given by the assumption of 
Lemma~\ref{lem:doubleroot2}.
In order to force a copy of the desired tree $T$ fast, the main idea will be to first embed everything of $T$ except from a special structure, i.e.~a long bare path or a large leaf matching,
in a careful greedy way, and then to finish the tree by forcing this remaining structure. Now, in case that there are $r=2$ roots $x,y$ with some path $P_{x,y}$ between them, we also need to
make sure to claim an appropriate copy of $P_{x,y}$ between the fixed images of $x,y$. Depending on the structure of $P_{x,y}$, we may first create this copy of $P_{x,y}$ and then continue as in~\cite{clemens2020fast}, or find a long bare path contained in $P_{x,y}$ which can be used as the special structure to be embedded last.
Because of this, we will deal with different cases in the proof of Lemma~\ref{lem:doubleroot2}, but in each of these cases at some point the main part of the strategy will be purely identical with the strategy presented in~\cite{clemens2020fast}.
In the proof of Lemma~\ref{lem:doubleroot2}, at the end of this section, we will mostly discuss our modifications and how to deal with $P_{x,y}$, while the following preparatory lemma will be used to cover those parts of the main strategy which are identical with~\cite{clemens2020fast}.

\begin{lemma} \label{lem:tree.extensions}
	For every $\mu>0$ there exists $\alpha>0$ such that the following 
	holds for every large enough integer $n$.
	Let a Waiter-Client game on $K_n$ be in progress.
	Let $T$ be a tree on $n$ vertices with 
	$\Delta(T)\leq \alpha\sqrt{n}$, and let $F\subset T$ be an 
	induced subgraph of $T$. Assume that the following holds:
	\begin{enumerate}[(a)]
	\item\label{barepath}
        $F$ consists of at most two components, 
		and if the number of components is two, then
		in $T$ they are connected by a bare path of length 
		at least $\mu\sqrt{n}$.
	\item\label{ftree} 
        If $F$ is a tree, then $T_0:=T\setminus E(F)$ 
		contains at least one of the following:
		\begin{enumerate}[(b1)]
		\item\label{b1} a bare path of length $\mu\sqrt{n}$ which 
			uses no vertex of $F$, 
		\item\label{b2} a matching of size $\mu\sqrt{n}$ such 
			that each of its edges intersects $L(T)$.
		\end{enumerate}
	\item\label{partial} 
        So far, Waiter forced a copy $\bar F$ of $F$ 
		such that every colored edge is contained in $V(\bar F)$. 
		Let $f: V(F)\rightarrow V(\bar F)$ be the embedding of $F$ 
		into Client's graph.
	\end{enumerate}
	Then Waiter can force a copy $\bar T$ of $T$ within 
	$e(T_0)+1$ further rounds such that for every $v\in V(F)$
	the copy of $v$ in $\bar T$ is the fixed vertex $f(v)$.
\end{lemma}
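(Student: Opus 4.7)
The plan is to adapt the embedding strategy used in Theorem~5.2 of \cite{clemens2020fast}. In both cases I reserve a ``special structure'' $S \subseteq T_0$ of size about $\mu\sqrt{n}$ — either a sufficiently long bare path or a large leaf matching — whose interior vertices (the inner vertices of the path, or the leaves incident to the matching edges) are kept free throughout a greedy embedding of the bulk subtree $T_1$ obtained from $T$ by deleting those interior vertices. Then $T_1 \supseteq F$, $e(T_1) = e(T) - e(S)$, and $e(T_1) - e(F) = e(T_0) - e(S)$.

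I would pick $S$ as follows. If $F$ has two components joined in $T$ by a bare path $P_{x,y}$ of length at least $\mu\sqrt{n}$, take $S := P_{x,y}$. If instead $F$ is a tree, assumption~(b) supplies either a bare path of length $\geq \mu\sqrt{n}$ in $T_0$ avoiding $V(F)$ or a matching of that size in $T_0$ every edge of which hits $L(T)$; take $S$ to be that structure. In every subcase the end-vertices of $S$ lying in $T_1$ — the two endpoints of the bare path, or the non-leaf endpoints of the matching edges — will already be embedded in $\bar{T_1}$ when Stage~II begins.

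Stage~I is the greedy embedding of $T_1 \setminus E(F)$, identical to the procedure in \cite{clemens2020fast}. Root each component of $T_1$ at a vertex of $F$ and fix a breadth-first ordering of $E(T_1) \setminus E(F)$, so that each successive edge $uw$ has $u$ already embedded at $\bar u$ and $w$ new. Reserve in advance a set $R$ of unused vertices of $K_n$ of size equal to the number of interior vertices of $S$, to serve as targets for Stage~II, and never offer them in Stage~I. To process an edge $uw$, Waiter offers $\bar u \bar w_1$ and $\bar u \bar w_2$ for any two free, unreserved vertices $\bar w_1, \bar w_2$ such that $\bar u \bar w_1$ and $\bar u \bar w_2$ are still free. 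Because every blue edge claimed in Stage~I has one endpoint at a previous extension centre, the number of blue edges incident to any $\bar u$ is bounded by $\deg_T(u) \le \alpha\sqrt n$, while the pool of unused, unreserved vertices remains of linear size throughout; for $\alpha$ small enough in terms of $\mu$, two valid candidates always exist, and Stage~I terminates in $e(T_1) - e(F) = e(T_0) - e(S)$ rounds.

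Stage~II completes the copy of $T$. If $S$ is a bare path with fixed endpoint images $\bar x, \bar y \in V(\bar{T_1})$, apply Lemma~\ref{lem:Ham.path.ends.fixed} on the complete graph spanned by $\bar x$, $\bar y$ and $R$ to force a red Hamilton path between $\bar x$ and $\bar y$ in $e(S)+1$ rounds; the required edges are free because Stage~I offered no edges incident to $R$. If $S$ is a leaf matching with its non-leaf endpoints already in $\bar{T_1}$, apply Lemma~\ref{lem:perfect_matching} between those fixed vertices and $R$ to force a red perfect matching in $e(S)+1$ rounds. In total Waiter uses $(e(T_0) - e(S)) + (e(S)+1) = e(T_0)+1$ rounds. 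The only point not already contained in \cite{clemens2020fast} is the disconnected case of $F$, where one observes that $S = P_{x,y}$ has both endpoints already embedded, so Lemma~\ref{lem:Ham.path.ends.fixed} applies directly; the blue-edge accounting that keeps Stage~I running is the main technical obstacle and is handled exactly as in the original argument.
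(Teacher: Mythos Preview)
Your overall plan---hold back a special structure $S$ (a long bare path or a large leaf matching), greedily extend $\bar F$ to a copy of $T_1 := T - \text{int}(S)$, then fill in $S$---matches both the paper and \cite{clemens2020fast}. The gap is in how you implement Stage~I.

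You reserve a fixed set $R$ of $|\text{int}(S)|$ vertices and forbid Waiter from ever offering an edge incident to $R$ during the greedy phase. But $v(T_1)=n-|R|$, so Stage~I must place exactly $n-|R|-v(F)$ new vertices, which is precisely the initial size of the pool of free \emph{unreserved} vertices. That pool therefore shrinks to zero; in the last round there is only one free unreserved vertex left, and Waiter cannot offer two candidate edges. Your sentence ``the pool of unused, unreserved vertices remains of linear size throughout'' is simply false, and the ``blue-edge accounting'' you defer to \cite{clemens2020fast} is not the obstacle here---the pool size is.

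The paper avoids this by \emph{not} reserving a fixed set. It lets $A$ be the set of all currently unembedded vertices, so $|A|\geq \mu\sqrt{n}-2$ holds throughout simply because only $v(T_1)$ vertices are ever embedded in this stage. The price is that $A$ is not untouched: vertices of $A$ may carry blue edges to already-embedded vertices. The whole point of Observations~5.3 and~5.4 in \cite{clemens2020fast}---which the paper merely checks at the start and then invokes---is to control these blue degrees (via $\deg_W(x,A)\leq \deg_C(x)$ and, in the matching case, the extra conditions on $S'$); with $\alpha<\mu/20$ this leaves enough free edges from any centre $\bar u$ into $A$. In Stage~III the same invariants let the more delicate closing strategies of \cite{clemens2020fast} run on an $A$ that is internally uncolored but not isolated from $\bar{T_1}$. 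Your shortcut of applying Lemma~\ref{lem:Ham.path.ends.fixed} or Lemma~\ref{lem:perfect_matching} directly to $R$ works only because you assumed $R$ was untouched; once you drop the reservation to make Stage~I feasible, the endpoints of the bare path (resp.\ the non-leaf ends of the matching) may have up to $\alpha\sqrt{n}$ blue edges into $A$, and those lemmas no longer apply out of the box.
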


\begin{proof}
Without loss of generality assume that $\mu\leq \frac{1}{3}$.
Lemma~\ref{lem:tree.extensions} can be proven
along the lines of Theorem~5.2 in~\cite{clemens2020fast}.
In particular, we choose $\alpha<\frac{\mu}{20}$ 
as in the proof of  Theorem~5.2 in~\cite{clemens2020fast}.
Moreover, we will use the notation 
from~\cite{clemens2020fast,ferber2012fast},
similarly to the proof of Lemma~\ref{lem:doubleroot1deg}. 
Let $T'\subset T$ be any subgraph of $T$, and
let $S \subseteq V(T')$ be an arbitrary set. 
Then an $S$-\textit{partial embedding} of $T'$ into
some graph $G\subset K_n$ is an injective mapping 
$f: S \rightarrow V(G)$ such that $f(x)f(y)\in E(G)$ holds if 
$xy\in E_{T'}(S)$. We call a vertex $v \in f(S)$ \textit{closed} with
respect to $T'$ if all the neighbors of $f^{-1}(v)$ in $T'$ are 
embedded by $f$. If $v\in f(S)$ is not closed with respect to ~$T'$, then we 
call it \textit{open} with respect to  $T'$, and moreover, 
we write $\mathcal{O}_{T'} = \mathcal{O}_{T'}(S)$ for the set of all 
vertices that are open with respect to  $T'$. The vertices not used for the 
embedding are collected in the set $A:=V(K_n) \setminus f(S)$, 
and they are called \textit{available}.

We consider two cases, depending on the number of components of $F$.

\paragraph{\textbf{Case 1}} 
Assume that $F$ has two components. Then by \ref{barepath} there is a 
bare path $P$ in $T$ of length at least $\mu\sqrt{n}$ which 
connects these two components.

Denote with $u$ and $w$ the endpoints of $P$, 
and let $u_1$ and $w_1$ be the neighbors of $u$ and $w$ in $P$, 
respectively. Note that $T-(V(P)\setminus\{u,w\})$ is a forest 
with two tree components $T_1$ and $T_2$. 
We let $T'\subset T$ be the forest induced by 
$E(T_1)\cup E(T_2)\cup \{uu_1,ww_1\}$, and note that $F\subset T'$. 

The main idea of Waiter's strategy now is to 
(1) first extend $\bar F$ to a copy $\bar{T'}$ of $T'$ 
without wasting any move and then 
(2) to obtain $\bar T$ by forcing an appropriate copy of 
$P-\{u,w\}$ while wasting at most one move. 
Such a strategy is already given in  Case A of the proof 
of Theorem 5.2 in~\cite{clemens2020fast},
and the same strategy can be applied here.
During part (1) of this strategy, Waiter maintains a set $S$ with 
$V(F)\subseteq S\subseteq V(T')$ as well as an $S$-partial embedding $f$ 
of $T'$ into Client's graph, which represents the subgraph of $T'$ 
which has been forced so far. Moreover, Waiter makes sure that at any 
time $f(v)=g(v)$ holds for every $v\in V(F)$.

Initially, we have $S = V(F)$ and the $S$-partial embedding $f$ is 
given by property \ref{partial}. For step (1), Waiter can play
the same strategy as for Stage II in Case A of the proof of 
Theorem~5.2 in~\cite{clemens2020fast}, 
where the only modification is that $e(T')-e(F)$ rounds get played; 
and for  step (2), she can play
exactly as in the strategy of Stage III in Case A of the proof of 
Theorem~5.2 in~\cite{clemens2020fast}, where only one round gets 
wasted. For this note that the heart of this strategy 
is Observation~5.3 in~\cite{clemens2020fast}.
This observation lists two properties (i) and (ii), that can be 
guaranteed as long as Waiter can follow the strategy of Stage~II, 
provided these properties were true at the beginning of this stage.
Moreover, in the discussion afterwards in~\cite{clemens2020fast} 
it is shown that, as long as these properties are maintained, 
Waiter can follow the strategy of Stage II and then also succeed with 
Stage~III, until a copy of $T$ is forced as required. 

It thus follows that, in order to guarantee that the same strategy can 
be used here, we only need to verify that the properties
(i) and (ii) from Observation 5.3 in~\cite{clemens2020fast} hold at 
the beginning, i.e.~when Waiter starts to extend the embedding of $F$ 
towards $T'$. The property (i) requires that
$|A|\geq \mu \sqrt{n} - 2$ and $e_{C\cup W}(A)=0$ hold.
The first part is satisfied at the beginning, since 
then $A=V(K_n)\setminus V({\bar F})$
and hence $|A| = n - |V(F)| \geq |V(P)| - 2 \geq \mu\sqrt{n} - 2$;
the second part holds because of \ref{partial}.
Moreover, property (ii) requires that $\deg_W(x,A)\leq \deg_C(x)$ 
for every $x\in f(S)$, which holds since by \ref{partial} 
there are no colored edges incident with $A$.

\paragraph{\textbf{Case 2}}
Assume that $F$ has exactly one component. Then by \ref{ftree}
we know that $T_0=T\setminus E(F)$ contains 
a bare path as described in \ref{b1} or a matching as described in \ref{b2}.

In the first case of having a bare path 
Waiter can play essentially the same way as in Case 1.
So, we may assume from now on that in $T_0$
there is a matching $M$ of size $\mu\sqrt{n}$ such 
that each of its edges intersects $L(T)$.
In particular, there is matching $M'\subset M$ of size at least 
$\frac{1}{2}\mu\sqrt{n}$
such that its edges have pairwise distance at least 2.
Let $L':=V(M')\cap L(T)$,
set $T'=T-L'$ and note that $F\subset T'$.

The main idea of Waiter's strategy now is to (1) first extend 
$\bar F$ to a copy $\bar{T'}$ of $T'$ without wasting any move and 
then (2) to obtain $\bar T$
by forcing an appropriate copy of $M'$ while wasting at most one move. 
Such a strategy is already given in 
Case B of the proof of Theorem 5.2 in~\cite{clemens2020fast}.
During part (1) of this strategy, Waiter again maintains 
a set $S$ with $V(F)\subseteq S\subseteq V(T')$ as well as an 
$S$-partial embedding $f$ of $T'$ into Client's graph, which 
represents the subgraph of $T'$ which has been forced so far. 
Moreover, Waiter makes sure that at any time $f(v)=g(v)$ holds 
for every $v\in V(F)$.

Initially, we have $S = V(F)$ and $f$ is given by property \ref{partial}. 
For step (1), Waiter can play exactly the same strategy as for 
Stage~II in Case~B of the proof of Theorem 5.2 
in~\cite{clemens2020fast}, where the only modification is that 
$e(T')-e(F)$ rounds get played; and for  step (2), she can play
exactly as in the strategy of Stage~III in Case~B of the proof of 
Theorem~5.2 in~\cite{clemens2020fast}, where only one round 
gets wasted. For this note that the heart of this strategy 
is Observation~5.4 in~\cite{clemens2020fast} 
(where $\alpha$ is replaced with $\eps$).
This observation lists four properties (i)--(iv), that can be 
guaranteed as long as Waiter can follow the strategy of Stage II, 
provided these properties were true at the beginning of this stage.
Moreover, in the discussion afterwards it is shown that, as long as 
these properties are maintained, 
Waiter can follow the strategy of Stage~II and then also succeed with 
Stage III, until a copy of $T$ is forced as required. The first two 
properties are the same as in Case 1 above, and they can be 
verified analogously. Properties (iii) an (iv) consider the set 
$S':=S\cap N_T(L')$, and they are satisfied if
$\deg_W(x,f(S'))\leq 1$ for every $x\in A$
and $e_W(f(S'),A) \leq \alpha \sqrt{n}$.
Both inequalities are guaranteed at the beginning, since by \ref{partial}
all colored edges are contained in $V(\bar F)$.
\end{proof}

\medskip

\begin{proof}[Proof of Lemma~\ref{lem:doubleroot2}]
Let $\mu\in (0,\frac{1}{20})$.
Apply Corollary~\ref{cor:leaves.and.paths.forest} and
Lemma~\ref{lem:tree.extensions} with input $2\mu$
to obtain outputs $\alpha_{\ref{cor:leaves.and.paths.forest}}$
and $\alpha_{\ref{lem:tree.extensions}}$,
and let $\eps>0$ be such that 
Theorem 5.2 in~\cite{clemens2020fast} holds.
We set 
$\alpha=\min\{\alpha_{\ref{cor:leaves.and.paths.forest}},	
	\alpha_{\ref{lem:tree.extensions}},\eps,\frac{\mu}{3}\}$.

If $r=1$ then the statement of Lemma~\ref{lem:doubleroot2}
follows directly from Theorem 5.2 in~\cite{clemens2020fast}.
Hence, from now on we may assume that $r=2$.
Denote the roots of $T$ with $x$ and $y$,
and let $P_{xy}$ be the unique $xy$-path in $T$. 
Moreover let $v_x$ and $v_y$ be the vertices in $K_n$ to which $x$ 
and $y$ must be mapped, respectively. We distinguish three cases.

\paragraph{\textbf{Case 1}}
Assume that at least $\mu\sqrt{n}$ inner vertices of $P_{xy}$ 
have degree at least 3 in $T$. 
Then set $F:=P_{xy}$ and notice that $T_0:=T\setminus E(F)$ 
consists of at least $\mu\sqrt{n}$ non-trivial components. 
In particular, $T_0$ must contain a matching of size $\mu\sqrt{n}$
such that each of its edges intersects $L(T)$; 
i.e. property \ref{ftree} from Lemma~\ref{lem:tree.extensions} holds.

In this case Waiter plays as follows:
let $V_1\subset V(K_n)$ be any subset 
with $v_x,v_y\in V_1$ and $|V_1|=v(P_{xy})$.
Then playing the first $e(P_{xy})+1$ rounds on
$K_n[V_1]$ according to Lemma~\ref{lem:Ham.path.ends.fixed}, 
Waiter forces a Hamilton path $H_{xy}$
with endpoints $v_x$ and $v_y$. 
Afterwards, Waiter follows the strategy from
Lemma~\ref{lem:tree.extensions} for $e(T_0)+1$ rounds, applied with
$F:=P_{xy}$ and $\bar F:=H_{xy}$. 
This way, Waiter forces a copy of $T$ as required.

\paragraph{\textbf{Case 2}}
Assume that $7\leq v(P_{xy})< \mu n$.
Then Waiter plays analogously to Case 1 with $F:=P_{xy}$. 
In order to make sure that this is possible, 
we only need to argue that
$T_0:=T\setminus E(F)$ satisfies property \ref{ftree}
from Lemma~\ref{lem:tree.extensions}. 

To do so, consider first the forest $T_0':=T-V(P_{xy})$,
and note that $v(T_0')>(1-\mu)n$.
If $T_0'$ contains at least $\alpha\mu n$
isolated vertices (which are leaves in $T_0$), 
then since $\Delta(T_0)\leq \alpha\sqrt{n}$,
we find in $T_0$ a leaf matching of size at least
$\mu\sqrt{n}$, and \ref{b2} is satisfied. 
So assume otherwise, i.e.~that are less than
$\alpha\mu n$ isolated vertices,
and let $T_0''$ be the forest obtained from $T_0'$
by deleting all isolated vertices.
Then $v(T_0'')>(1-2\mu)n$.
Hence, by Corollary~\ref{cor:leaves.and.paths.forest}
it follows that $T_0''$ contains a bare path $P'$
or a leaf matching $M'$ of size at least $2\mu\sqrt{n}$.
If we find such a bare path $P'$, 
then in $T$ at most one vertex of $P'$
can be in the neighborhood of $V(P_{xy})$.
Hence, there must be a path $P\subset P'$
with $e(P)\geq \frac{e(P')}{2}$ which is a bare path in
$T$, and \ref{b1} follows. So, assume from now on that
we find a leaf matching $M'$ as described above.
We then consider all the components $C_1,C_2,\ldots$
of $T_0''$ and denote with $v_i$ the unique vertex in $C_i$ which
is a neighbor of $V(P_{xy})$ in $T$.
Note that from each component $C_i$ we can take at least one edge
for a leaf matching $M$ in $T$ 
(this edge does not need to belong to $M'$).
Moreover, if for a component $C_i$ we have $|M'\cap E(C_i)|\geq 2$, 
then at most one edge in $M'\cap E(C_i)$ contains $v_i$,
and hence we can take all the remaining edges in
$M'\cap E(C_i)$ for a leaf matching $M$ in $T$.
It thus follows that there is a leaf matching $M$ in 
$T$ with $|M|\geq \frac{|M'|}{2}\geq \mu\sqrt{n}$,
and thus \ref{b2} is satisfied.

\paragraph{\textbf{Case 3}}
Assume that less than $\mu\sqrt{n}$ inner vertices of $P_{xy}$ 
have degree at least 3 in $T$, and that $v(P_{xy})\geq \mu n$.
Then we can split $P_{xy}$ into at least $\mu\sqrt{n}$ edge-disjoint 
intervals each of length at least $\mu\sqrt{n}$,
and hence, by the assumption of this subcase, at least one of these 
intervals must form a bare path $P'$. Label the endpoints of $P'$ with 
$x'$ and $y'$. Let $F$ be the forest obtained from $T$ 
by removing all inner vertices of $P'$, and 
note that $F$ has two components such that each of them contains 
one of the roots $x,y$. Moreover,
$$v(F)+\Delta(F) \leq n - (v(P')-2) + \Delta(T)  \leq n + 2 
	- \mu\sqrt{n} + \alpha \sqrt{n} < n\, .$$ 

Waiter now plays as follows.
At first she plays according to the strategy of 
Lemma~\ref{lem:doubleroot1deg} (with $d:=0$ and $B:=K_n$) 
until after $e(F)$ rounds a copy $\bar{F}$ of $F$ is forced such
that $x,y$ are mapped to $v_x,v_y$,
and such \ref{doubleroot1deg_nonleaf}  and \ref{doubleroot1deg_bluestars}  from Lemma~\ref{lem:doubleroot1deg} hold.
Let $v_{x'}$ and $v_{y'}$ be the images of $x'$ and $y'$. 
Waiter continues as follows:
in the next two rounds, Waiter first forces 
an edge $x'x''$ with $x''\notin V(\bar{F})$ 
(by offering two such edges), and then forces an edge
$y'y''$ such that $y''\notin V(\bar{F})\cup \{x''\}$ 
(by offering two such edges).
This is possible by \ref{doubleroot1deg_bluestars}  from Lemma~\ref{lem:doubleroot1deg} and 
since
$|V(K_n)\setminus V(\bar{F})|\geq \mu\sqrt{n}-1 
	>\alpha\sqrt{n} + 4\geq \Delta(T)+4$.
Afterwards, notice that by property \ref{doubleroot1deg_nonleaf}  from 
Lemma~\ref{lem:doubleroot1deg}, all edges in
$V(K_n)\setminus V(\bar{F})$ are still uncolored.
Now, playing only on $V(K_n)\setminus V(\bar{F})$
according to Lemma~\ref{lem:Ham.path.ends.fixed},
Waiter forces a Hamilton path between $x''$ and $y''$, 
and finishes a copy of $T$ as desired.
\end{proof}

\end{document}